\def\phi{\varphi}
\def\rho{\varrho}
\def\epsilon{\varepsilon}
\numberwithin{equation}{section}
\theoremstyle{plain}
\newtheorem{theorem}[equation]{Theorem}
\newtheorem{lemma}[equation]{Lemma}
\theoremstyle{definition}
\newtheorem{definition}[equation]{Definition}
\theoremstyle{remark}
\newtheorem{remark}[equation]{Remark}
\renewcommand{\leq}{\leqslant}
\renewcommand{\geq}{\geqslant}
\begin{document}
\title[Herz-type Sobolev spaces on domains]{Herz-type Sobolev spaces on
domains}
\author{Douadi Drihem}
\maketitle
\date{\today }

\begin{abstract}
We introduce Herz-type Sobolev spaces on domains, which unify and generalize
the classical \ Sobolev spaces. We will give a proof of the Sobolev-type
embedding for these function spaces. All these results generalize the
classical results on Sobolev spaces. Some remarks on
Caffarelli--Kohn--Nirenberg inequality are given.

\textit{MSC 2010\/}: 46B70, 46E35.

\textit{Key Words and Phrases}: Herz spaces, Sobolev spaces,
Caffarelli--Kohn--Nirenberg inequality, Embeddings.
\end{abstract}

\section{Introduction}

Function spaces have been widely used in various areas of analysis such as
harmonic analysis and partial differential equations. Some example of these
spaces can be mentioned such as Sobolev spaces. The interest in these spaces
comes not only from theoretical reasons but also from their applications in
mathematical analysis. We refer to the monographs \cite{AdamsHedberg96}, 
\cite{AdamsFournier03}, \cite{Burenkov} and \cite{Ma85} for further details,
historical remarks and references on Sobolev spaces.

It is well known that Herz spaces play an important role in harmonic
analysis. After they have been introduced in \cite{Herz68}, the theory of
these spaces had a remarkable development in part due to its usefulness in
applications. For instance, they appear in the characterization of
multipliers on Hardy spaces \cite{BS85}, in the summability of Fourier
transforms \cite{Fi-We08}, in regularity theory for elliptic equations in
divergence form \cite{Rag09}. Also \cite{Tu11}, studied the Cauchy problem
for Navier-Stokes equations on Herz spaces and weak Herz spaces. Recently,
Herz spaces appear in the study of semilinear parabolic equations \cite%
{Dr-Herz-Heat} and summability of Fourier transforms on mixed-norm Lebesgue
spaces \cite{HWDY}. For important and latest results on Herz spaces, we
refer the reader to the papers \cite{RS}, \cite{ZYZ} and to the monograph 
\cite{LYH}.

Based on Sobolev and Herz spaces we present a class of function spaces,
called Herz-type Sobolev spaces, which generalize the classical Sobolev
spaces. These type of function spaces, but over ${\mathbb{R}^{n}}$, are
introduced by Lu and Yang \cite{LuYang97} were gave some applications to
partial differential equations.

In this paper our spaces defined over a domain.\ More precisely the domain
is often assumed to satisfy a cone condition.

The paper is organized as follows. First we give some preliminaries where we
fix some notation and recall some basics facts on Herz spaces, where the
approximation by smooth functions is given. In particular, we will prove the
Herz type version of Caffarelli--Kohn--Nirenberg-type inequalities.

In Section 3, we present basics facts on Herz-type Sobolev spaces in analogy
to the classical Sobolev spaces and we prove a Sobolev embedding theorem for
these spaces. In particular we prove that 
\begin{equation}
\dot{K}_{p,m}^{\alpha _{2},r}(\Omega )\hookrightarrow \dot{K}_{q}^{\alpha
_{1},r}(\Omega )  \label{main-est}
\end{equation}%
with some appropriate assumptions on the parameters. The surprise here is
that the embedding \eqref{main-est} is true if $1<q<p<\infty ,\alpha _{2}+%
\frac{n}{p}\geq \alpha _{1}+\frac{n}{q}>0$ and 
\begin{equation*}
\max \Big(\frac{n}{p},\frac{n}{p}+\alpha _{2},\frac{n}{p}-\frac{n}{q}+\alpha
_{2}-\alpha _{1}\Big)<m<n.
\end{equation*}

The proof based on a local estimate and on the boundedness of maximal
function and Riesz potential operator on Herz spaces. Other properties of
these function spaces such interpolation inequalities, extension and compact
embeddings are postponed to the future work.

\section{Herz spaces}

As usual, $\mathbb{R}^{n}$ denotes the $n$-dimensional real Euclidean space, 
$\mathbb{N}$ the collection of all natural numbers and $\mathbb{N}_{0}=%
\mathbb{N}\cup \{0\}$. The letter $\mathbb{Z}$ stands for the set of all
integer numbers. For any $u>0$, $k\in \mathbb{Z}$ we set $R(u)=\{x\in 
\mathbb{R}^{n}:\frac{u}{2}\leq \left\vert x\right\vert <u\}$ and $%
R_{k}=R(2^{k})$. For $x\in \mathbb{R}^{n}$ and $r>0$ we denote by $B(x,r)$
the open ball in $\mathbb{R}^{n}$ with center $x$ and radius $r$. Let $\chi
_{k}$, for $k\in \mathbb{Z}$, denote the characteristic function of the set $%
R_{k}$. If $1\leq p<\infty $ and $\frac{1}{p}+\frac{1}{p^{\prime }}=1$, then 
$p^{\prime }$ is called the conjugate exponent of $p$.

\noindent We denote by $\left\vert \Omega \right\vert $ the $n$-dimensional
Lebesgue measure of $\Omega \subseteq \mathbb{R}^{n}$. For any measurable
subset $\Omega \subseteq \mathbb{R}^{n}$ the Lebesgue space $L^{p}(\Omega )$%
, $0<p\leq \infty $ consists of all measurable functions for which 
\begin{equation*}
\big\Vert f\big\Vert_{L^{p}(\Omega )}=\Big(\int_{\Omega }\left\vert
f(x)\right\vert ^{p}dx\Big)^{1/p}<\infty ,\text{\quad }0<p<\infty
\end{equation*}%
and 
\begin{equation*}
\big\Vert f\big\Vert_{L^{\infty }(\Omega )}=\underset{x\in \Omega }{\text{%
ess-sup}}\left\vert f(x)\right\vert <\infty .
\end{equation*}%
If $\Omega =\mathbb{R}^{n}$ then we put $\big\Vert f\big\Vert_{L^{p}(\mathbb{%
R}^{n})}=\big\Vert f\big\Vert_{p}$.

Let $\Omega \subseteq \mathbb{R}^{n}$ be open. For any nonnegative integer $%
m $ let $C^{m}(\Omega )$ be the vector space consisting of all functions $f$%
, which, together with all their partial derivatives $D^{\beta }f$ of orders 
$|\beta |\leq m$, are continuous on $\Omega $. We put $C^{0}(\Omega
)=C(\Omega )$ and $C^{\infty }(\Omega )=\cap _{m\geq 0}C^{m}(\Omega )$. We
denote by $C_{c}(\Omega )$\ the set of all functions in $C(\Omega )$ which
have compact support in $\Omega $.

In this section we present some fundamental properties of Herz spaces. We
start by recalling the definition and some of the properties of the
homogenous Herz spaces.

\begin{definition}
\label{def-inh-Herz}Let $\alpha \in \mathbb{R}$ and $1\leq p,q\leq \infty $.
The homogeneous Herz space $\dot{K}_{p}^{\alpha ,q}(\mathbb{R}^{n})$ is
defined as the set of all $f\in L_{\mathrm{loc}}^{p}\left( {\mathbb{R}^{n}}%
\setminus \{0\}\right) $ such that 
\begin{equation*}
\big\Vert f\big\Vert_{\dot{K}_{p}^{\alpha ,q}(\mathbb{R}^{n})}=\Big(%
\sum\limits_{k=-\infty }^{\infty }2^{k\alpha q}\big\|f\,\chi _{k}\big\|%
_{p}^{q}\Big)^{1/q}<\infty
\end{equation*}%
with the usual modifications when $p=\infty $ and/or $q=\infty $.
\end{definition}

The spaces $\dot{K}_{p}^{\alpha ,q}(\mathbb{R}^{n})$ are Banach spaces. If $%
\alpha =0$\ and $1\leq p=q\leq \infty $ then $\dot{K}_{p}^{0,p}(\mathbb{R}%
^{n})$ coincides with the Lebesgue spaces $L^{p}(\mathbb{R}^{n})$. If $1\leq
q_{1}\leq q_{2}\leq \infty $, then we may derive the embedding $\dot{K}%
_{p}^{\alpha ,q_{1}}(\mathbb{R}^{n})\hookrightarrow \dot{K}_{p}^{\alpha
,q_{2}}(\mathbb{R}^{n})$. In addition%
\begin{equation*}
\dot{K}_{p}^{\alpha ,p}(\mathbb{R}^{n})=L^{p}(\mathbb{R}^{n},|\cdot
|^{\alpha p}),\quad \text{(Lebesgue space equipped with power weight),}
\end{equation*}%
where%
\begin{equation*}
\big\|f\big\|_{L^{p}(\mathbb{R}^{n},|\cdot |^{\alpha p})}=\Big(\int_{\mathbb{%
R}^{n}}\left\vert f(x)\right\vert ^{p}|x|^{\alpha p}dx\Big)^{1/p}.
\end{equation*}%
If $\Omega \subset \mathbb{R}^{n}$ is open and $f:\Omega \rightarrow \mathbb{%
R}$ a measurable function, then we write $f\in \dot{K}_{p}^{\alpha
,q}(\Omega )$ if $f\chi _{\Omega }\in \dot{K}_{p}^{\alpha ,q}(\mathbb{R}%
^{n}) $ and we put $\big\|f\big\|_{\dot{K}_{p}^{\alpha ,q}(\Omega )}=\big\|%
f\chi _{\Omega }\big\|_{\dot{K}_{p}^{\alpha ,q}(\mathbb{R}^{n})}$.

Various important results have been proved in the space $\dot{K}_{p}^{\alpha
,q}(\mathbb{R}^{n})$\ under some assumptions on $\alpha ,p$ and $q$. The
conditions $-\frac{n}{p}<\alpha <n(1-\frac{1}{p}),1<p<\infty $ and $1\leq
q\leq \infty $ is crucial in the study of the boundedness of classical
operators in $\dot{K}_{p}^{\alpha ,q}(\mathbb{R}^{n})$ spaces. This fact was
first realized by Li and Yang \cite{LiYang96} with the proof of the
boundedness of the maximal function. As usual, we put%
\begin{equation*}
\mathcal{M(}f)(x)=\sup_{Q}\frac{1}{|Q|}\int_{Q}\left\vert f(y)\right\vert
dy,\quad f\in L_{\mathrm{loc}}^{1}(\mathbb{R}^{n}),
\end{equation*}%
where the supremum\ is taken over all cubes with sides parallel to the axis
and $x\in Q$. Also we set 
\begin{equation*}
\mathcal{M}_{t}(f)=\left( \mathcal{M(}\left\vert f\right\vert ^{t})\right) ^{%
\frac{1}{t}},\quad 0<t<\infty .
\end{equation*}

\begin{lemma}
\label{Maximal-Inq}Let $1<p<\infty $ and $1\leq q\leq \infty $. If $f$ is a
locally integrable functions on $\mathbb{R}^{n}$ and $-\frac{n}{p}<\alpha
<n(1-\frac{1}{p})$, then%
\begin{equation*}
\big\|\mathcal{M}(f)\big\|_{\dot{K}_{p}^{\alpha ,q}(\mathbb{R}^{n})}\leq c%
\big\|f\big\|_{\dot{K}_{p}^{\alpha ,q}(\mathbb{R}^{n})}.
\end{equation*}
\end{lemma}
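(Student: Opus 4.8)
The plan is to estimate the Herz norm annulus by annulus and then sum. Writing $f=\sum_{j\in\Z}f\chi_{j}$ and using the sublinearity of the maximal operator, one has the pointwise bound $\mathcal{M}f\le\sum_{j\in\Z}\mathcal{M}(f\chi_{j})$, so that for every $k\in\Z$
\begin{equation*}
\big\|(\mathcal{M}f)\chi_{k}\big\|_{p}\le\sum_{j\in\Z}\big\|(\mathcal{M}(f\chi_{j}))\chi_{k}\big\|_{p}.
\end{equation*}
First I would split the sum over $j$ into the near-diagonal part $|j-k|\le 1$, the inner part $j\le k-2$, and the outer part $j\ge k+2$. For the near-diagonal part the classical $L^{p}$-boundedness of $\mathcal{M}$ (valid since $1<p<\infty$) gives at once $\|(\mathcal{M}(f\chi_{j}))\chi_{k}\|_{p}\le c\,\|f\chi_{j}\|_{p}$, and since $|j-k|\le1$ the weight $2^{k\alpha}$ is comparable to $2^{j\alpha}$.

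The heart of the argument is the off-diagonal regime, where the geometric separation of the annuli forces any averaging cube to be large. Writing $p'$ for the conjugate exponent of $p$: if $x\in C_{k}$ and a cube $Q\ni x$ meets $C_{j}$ with $j\le k-2$, then $\mathrm{dist}(x,C_{j})\gtrsim 2^{k}$, so $|Q|\gtrsim 2^{kn}$ and hence, by H\"older's inequality,
\begin{equation*}
\mathcal{M}(f\chi_{j})(x)\lesssim 2^{-kn}\int_{C_{j}}|f|\,dy\lesssim 2^{-kn}2^{jn/p'}\|f\chi_{j}\|_{p}.
\end{equation*}
Taking the $L^{p}$-norm over $C_{k}$ (whose measure is $\approx 2^{kn}$) yields $\|(\mathcal{M}(f\chi_{j}))\chi_{k}\|_{p}\lesssim 2^{(j-k)n/p'}\|f\chi_{j}\|_{p}$. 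The symmetric computation in the outer regime $j\ge k+2$, where now $|Q|\gtrsim 2^{jn}$, gives $\|(\mathcal{M}(f\chi_{j}))\chi_{k}\|_{p}\lesssim 2^{(k-j)n/p}\|f\chi_{j}\|_{p}$.

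To finish, set $a_{j}=2^{j\alpha}\|f\chi_{j}\|_{p}$ and $b_{k}=2^{k\alpha}\|(\mathcal{M}f)\chi_{k}\|_{p}$. Multiplying the three estimates by $2^{k\alpha}$ assembles them into a discrete convolution
\begin{equation*}
b_{k}\lesssim\sum_{j\le k-2}2^{(k-j)(\alpha-n/p')}a_{j}+\sum_{|j-k|\le 1}a_{j}+\sum_{j\ge k+2}2^{(k-j)(\alpha+n/p)}a_{j}=:(K\ast a)_{k},
\end{equation*}
where $K$ is a fixed sequence on $\Z$. This is exactly where the hypotheses enter: since $\alpha<n(1-\tfrac{1}{p})=n/p'$ the inner exponent $\alpha-n/p'$ is negative (and $k-j\ge 2$), while since $\alpha>-\tfrac{n}{p}$ the outer exponent $\alpha+n/p$ is positive (and $k-j\le-2$); thus $K$ decays geometrically on both sides and $K\in\ell^{1}(\Z)$. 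Young's inequality for convolution then gives $\|b\|_{\ell^{q}}\lesssim\|K\|_{\ell^{1}}\|a\|_{\ell^{q}}$, which is the asserted estimate (the case $q=\infty$ being identical with suprema in place of sums). The main obstacle I anticipate is carrying out the two off-diagonal pointwise bounds cleanly and verifying that the endpoint conditions on $\alpha$ are precisely those that place the convolution kernel $K$ in $\ell^{1}$.
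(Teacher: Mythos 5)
Your proof is correct. Note that the paper does not prove this lemma at all: it is quoted from Li and Yang \cite{LiYang96}, and your argument --- decomposing $f$ over the dyadic annuli, treating the near-diagonal terms with the $L^{p}$-boundedness of $\mathcal{M}$, extracting the off-diagonal decay $2^{(j-k)n/p'}$ (for $j\leq k-2$) and $2^{(k-j)n/p}$ (for $j\geq k+2$) from the forced size of the averaging cubes, and closing with Young's inequality in $\ell^{q}$ under exactly the conditions $-\frac{n}{p}<\alpha<\frac{n}{p'}$ --- is precisely the standard proof of that cited result, carried out correctly and with the endpoint conditions entering where they should.
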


A detailed discussion of the properties of these spaces my be found in the
recent monograph \cite{LYH08}, the papers \cite{LuYang1.95}, \cite%
{LuYang2.95}, \cite{Hernandez1998}, and references therein.

The next lemma is a Hardy-type inequality which is basically a consequence
of Young's inequality in the sequence Lebesgue space $\ell ^{q}$.

\begin{lemma}
\label{lq-inequality}\textit{Let }$0<a<1\ $\textit{and }$0<q\leq \infty $%
\textit{. Let }$\left\{ \varepsilon _{k}\right\} _{k\in \mathbb{Z}}$\textit{%
\ be a sequences of positive real numbers and denote} $\delta
_{k}=\sum_{j=k}^{\infty }a^{j-k}\varepsilon _{j}$, $k\in \mathbb{Z}$.\textit{%
\ }Then there exists a constant $c>0\ $\textit{depending only on }$a$\textit{%
\ and }$q$ such that%
\begin{equation*}
\big\|\left\{ \delta _{k}\right\} _{k\in \mathbb{Z}}\big\|_{\ell ^{q}}\leq c%
\big\|\left\{ \varepsilon _{k}\right\} _{k\in \mathbb{Z}}\big\|_{\ell ^{q}}.
\end{equation*}
\end{lemma}

Let $V_{\alpha ,p,q}$ be the set of $(\alpha ,p,q)\in \mathbb{R}\times
\lbrack 1,\infty ]^{2}$\ such that:

\textbullet\ $\alpha <n-\frac{n}{p}$, $1\leq p\leq \infty $ and $1\leq q\leq
\infty ,$

\textbullet\ $\alpha =n-\frac{n}{p}$, $1\leq p\leq \infty \ $and$\ q=1,$

The next lemma gives a necessary and sufficient condition on the parameters $%
\alpha $, $p$ and $q$, in order to make sure that 
\begin{equation*}
\langle T_{f},\varphi \rangle =\int_{\Omega }f(x)\varphi (x)dx,\quad \varphi
\in \mathcal{D}(\Omega ),f\in \dot{K}_{p}^{\alpha ,q}(\Omega )
\end{equation*}%
generates a regular distribution $T_{f}\in \mathcal{D}^{\prime }(\Omega )$.

\begin{lemma}
\label{L1loc}\ Let $\Omega \subset \mathbb{R}^{n}$ be open, $0\in \Omega $
and $1\leq p,q\leq \infty $. Then 
\begin{equation*}
\dot{K}_{p}^{\alpha ,q}(\Omega )\hookrightarrow L_{\mathrm{loc}}^{1}(\Omega
),
\end{equation*}
\end{lemma}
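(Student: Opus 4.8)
Under the admissibility condition $(\alpha,p,q)\in V_{\alpha,p,q}$ introduced above, the plan is to read the embedding in the sense appropriate to the Fr\'echet space $L^{1}_{\mathrm{loc}}(\Omega)$: I will show that for every compact set $K\subset\Omega$ there is a constant $c_{K}$ with
\[
\int_{K}|f(x)|\,dx\le c_{K}\,\|f\|_{\dot{K}_{p}^{\alpha,q}(\Omega)}\qquad\text{for all }f\in\dot{K}_{p}^{\alpha,q}(\Omega),
\]
the estimate on which both the inclusion and its continuity rest. Since $K$ is compact, $K\subset B(0,R)$ for some $R>0$, so $K\cap C_{k}=\varnothing$ once $2^{k-1}\ge R$; hence the sum over $k$ is one-sided, extending only over $k\le k_{1}$ for a suitable integer $k_{1}$. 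The direction that genuinely remains infinite is $k\to-\infty$, which is present precisely because $0\in\Omega$ allows $K$ to be, for instance, a small closed ball centred at the origin.

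First I decompose $\int_{K}|f|\,dx=\sum_{k\le k_{1}}\int_{K\cap C_{k}}|f|\,dx$ and bound each annular term by H\"older's inequality in the space variable. With $p'$ the conjugate exponent of $p$ and using $|C_{k}|\le c\,2^{kn}$,
\[
\int_{K\cap C_{k}}|f|\,dx\le\|f\chi_{k}\|_{p}\,|C_{k}|^{1/p'}\le c\,2^{kn(1-1/p)}\,\|f\chi_{k}\|_{p},
\]
an inequality valid uniformly for $1\le p\le\infty$ with the usual conventions. Inserting the Herz weight, I rewrite this bound as $c\,a_{k}b_{k}$ with $a_{k}:=2^{k\alpha}\|f\chi_{k}\|_{p}$ and $b_{k}:=2^{k(n(1-1/p)-\alpha)}$.

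Next I sum in $k$ by H\"older's inequality in $\ell^{q}$ (with conjugate exponent $q'$):
\[
\int_{K}|f|\,dx\le c\sum_{k\le k_{1}}a_{k}b_{k}\le c\,\big\|\{a_{k}\}\big\|_{\ell^{q}}\,\big\|\{b_{k}\}_{k\le k_{1}}\big\|_{\ell^{q'}}=c\,\|f\|_{\dot{K}_{p}^{\alpha,q}(\Omega)}\,\big\|\{b_{k}\}_{k\le k_{1}}\big\|_{\ell^{q'}}.
\]
Everything thus reduces to the finiteness of the one-sided geometric sum $\big\|\{b_{k}\}_{k\le k_{1}}\big\|_{\ell^{q'}}$, and this is exactly where membership in $V_{\alpha,p,q}$ is used. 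For $1<q\le\infty$ one has $q'<\infty$ and $\sum_{k\le k_{1}}2^{k(n(1-1/p)-\alpha)q'}$ converges precisely when $\alpha<n-\frac{n}{p}$, the first regime of $V_{\alpha,p,q}$. For $q=1$ one has $q'=\infty$ and the relevant quantity is $\sup_{k\le k_{1}}2^{k(n(1-1/p)-\alpha)}$, finite as soon as $\alpha\le n-\frac{n}{p}$, which covers the borderline case $\alpha=n-\frac{n}{p}$. Finally the endpoint $\alpha=0$, $p=\infty$, $q=\infty$ already falls in the first regime and can also be seen directly, since there $\dot{K}_{\infty}^{0,\infty}(\Omega)=L^{\infty}(\Omega)$ and $\int_{K}|f|\,dx\le\|f\|_{\infty}|K|$.

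The step I expect to require the most care is the bookkeeping of the endpoint exponents $q\in\{1,\infty\}$ and $p=\infty$, where the $\ell^{q}$ H\"older inequality degenerates into a supremum or an $\ell^{1}$--$\ell^{\infty}$ pairing; in each degeneration the threshold of convergence of the geometric sum in $k$ must be matched against the precise constraints defining $V_{\alpha,p,q}$. Conversely, the necessity of these constraints (the ``only if'' half of the statement) can be obtained by testing the would-be embedding against functions concentrated near the origin, for instance $|x|^{-\beta}$-type profiles truncated to $B(0,R)$, whose Herz norm stays finite while their integral over a neighbourhood of $0$ diverges once $\alpha$ exceeds the admissible threshold.
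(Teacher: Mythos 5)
Your sufficiency argument is correct and is essentially the paper's own: H\"older's inequality on each annulus gives $\|f\chi_k\|_{1}\lesssim 2^{kn(1-1/p)}\|f\chi_k\|_{p}$, and the resulting one-sided sum over $k\le k_1$ is controlled by $\ell^{q}$--$\ell^{q'}$ duality under exactly the constraints defining $V_{\alpha,p,q}$. The paper carries this out only in the regime $\alpha<n-\frac{n}{p}$ and dismisses the remaining cases ``by similarity,'' so your uniform bookkeeping of the endpoints $q\in\{1,\infty\}$ is, if anything, more complete; your reduction to arbitrary compact $K\subset B(0,R)$ is also slightly more careful than the paper's restriction to balls $B(0,2^N)\subset\Omega$ centred at the origin.

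The genuine gap is in the necessity half, which you only sketch. Power profiles $|x|^{-\beta}$ truncated near the origin do settle the case $\alpha>n-\frac{n}{p}$ (take $\beta=n$, as the paper does), and also the borderline case with $q=\infty$, but they \emph{cannot} settle the borderline case $\alpha=n-\frac{n}{p}$ with $1<q<\infty$: for $f=|x|^{-\beta}\chi_{0<|x|<r}$ one has $2^{k\alpha}\|f\chi_k\|_{p}\approx 2^{k(n-\beta)}$, so finiteness of the $\ell^{q}$ norm over $k\to-\infty$ forces $\beta<n$, while $f\notin L^1_{\mathrm{loc}}$ forces $\beta\ge n$; no power achieves both. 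A logarithmic correction is needed, and the paper uses $f(x)=|x|^{-n}(|\log|x||)^{-1}\chi_{0<|x|<1/2}(x)$, whose Herz norm is comparable to $\big(\sum_{k\ge 1}k^{-q}\big)^{1/q}<\infty$ for $q>1$ while $f$ fails to be integrable near the origin. Your proposal as written would not produce a counterexample in this borderline regime.
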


if and only if $(\alpha ,p,q)\in V_{\alpha ,p,q}.$

\begin{proof}
We divide the proof into two steps.

\textbf{Step 1.} Assume that $(\alpha ,p,q)\in V_{\alpha ,p,q}$, $f\in \dot{K%
}_{p}^{\alpha ,q}(\Omega )$ and $B(0,2^{N})\subset \Omega ,N\in \mathbb{Z}$.
By similarity we only consider the first case. H\"{o}lder's inequality gives%
\begin{align*}
\big\|f\big\|_{L^{1}(B(0,2^{N}))} &=\sum_{i=-\infty }^{N}\big\|f\chi
_{R_{i}\cap \Omega }\big\|_{1} \\
&\lesssim \sum_{i=-\infty }^{N}2^{i(n-\frac{n}{p})}\big\|f\chi _{R_{i}\cap
\Omega }\big\|_{p} \\
&=c2^{N(n-\frac{n}{p}-\alpha )}\sum_{i=-\infty }^{N}2^{(i-N)(n-\frac{n}{p}%
-\alpha )}2^{i\alpha }\big\|f\chi _{R_{i}\cap \Omega }\big\|_{p} \\
&\lesssim \big\|f\big\|_{\dot{K}_{p}^{\alpha ,q}(\Omega )}.
\end{align*}

\textbf{Step 2.} Assume that $(\alpha ,p,q)\notin V_{\alpha ,p,q}$. We
distinguish two cases.

\textbf{Case 1.} $\alpha >n-\frac{n}{p}$. Let $r>0$ be such that $%
B(0,r)\subset \Omega $ and set $f(x)=|x|^{-n}\chi _{0<|\cdot |<r}(x)$. We
obtain $f\in \dot{K}_{p}^{\alpha ,q}(\Omega )$ for any $1\leq p,q\leq \infty 
$\ whereas $f\notin L_{\mathrm{loc}}^{1}(\Omega )$. Indeed, we find%
\begin{align*}
\big\|f\big\|_{\dot{K}_{p}^{\alpha ,q}(\Omega )}^{q} &=\sum_{k\in \mathbb{Z}%
:2^{k}<2r}2^{k\alpha q}\big\|f\chi _{R_{k}\cap \Omega }\big\|_{p}^{q} \\
&\lesssim \sum_{k\in \mathbb{Z}:2^{k}<2r}2^{k(\alpha -n)q}\big\|\chi
_{R_{k}}\chi _{0<|\cdot |<r}\big\|_{p}^{q} \\
&\lesssim \sum_{k\in \mathbb{Z}:2^{k}<2r}2^{k(\alpha -n+\frac{n}{p})q} \\
&<\infty ,
\end{align*}%
with the usual modification if $p=\infty $ and/or $q=\infty $. Obviously, $%
f\notin L_{\mathrm{loc}}^{1}(\Omega )$.

\textbf{Case 2.} $\alpha =n-\frac{n}{p}$, $1\leq p\leq \infty $ and $1<q\leq
\infty $. By similarity we can assume that $B(0,\frac{1}{2})\subset \Omega $%
. We consider the function $f$ defined by%
\begin{equation*}
f(x)=|x|^{-n}(|\log |x||)^{-1}\chi _{0<|\cdot |<\frac{1}{2}}(x).
\end{equation*}%
An easy computation yields that%
\begin{equation*}
\big\|f\big\|_{\dot{K}_{p}^{\alpha ,q}(\Omega )}^{q}\lesssim
\sum_{k=1}^{\infty }k^{-q}<\infty ,
\end{equation*}%
which gives that $f\in \dot{K}_{p}^{n-\frac{n}{p},q}(\Omega )$, with the
usual modifications when $q=\infty $. It is easily seen that $f$ does not
belong to $L_{\mathrm{loc}}^{1}(\Omega )$.
\end{proof}

\begin{remark}
We easily see that in general if $0\notin \Omega $ then the set $V_{\alpha
,p,q}$ is not optimal. From this lemma it thus makes sense to talk about
weak derivatives of functions in $\dot{K}_{p}^{\alpha ,q}(\Omega )$, in
addition the assumption $(\alpha ,p,q)\in V_{\alpha ,p,q}$ is optimal.
\end{remark}

\begin{theorem}
\label{dense}Let $\Omega \subset \mathbb{R}^{n}$\ be open and $0\in \Omega $%
, $1<p<\infty ,1\leq q<\infty $ and $\alpha >-\frac{n}{p}$. Then $%
C_{c}(\Omega )$ is dense in $\dot{K}_{p}^{\alpha ,q}(\Omega )$.
\end{theorem}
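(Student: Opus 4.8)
The plan is to prove density by a two-stage reduction: first cut $f$ down to a function supported on a bounded annulus away from the origin, and then approximate that function in the classical $L^p$ sense, exploiting the fact that on a finite range of annuli the Herz norm is comparable to the Lebesgue norm. The two hypotheses enter exactly once each: $q<\infty$ in the truncation and $p<\infty$ in the $L^p$-approximation.

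First I would fix $f\in\dot{K}_p^{\alpha,q}(\Omega)$ and $\epsilon>0$ and use $q<\infty$ to truncate. Since the series $\|f\|_{\dot{K}_p^{\alpha,q}(\Omega)}^q=\sum_{k\in\mathbb{Z}}2^{k\alpha q}\|f\chi_\Omega\chi_k\|_p^q$ converges, its tails vanish, so for $N$ large enough the function $f_N=f\,\chi_{\{2^{-N-1}\le|\cdot|<2^N\}}$ satisfies $\|f-f_N\|_{\dot{K}_p^{\alpha,q}(\Omega)}<\epsilon/2$. This reduces matters to approximating a function supported on the finitely many annuli $C_k$ with $-N\le k\le N$. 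The key observation is that on such a finite range the Herz and Lebesgue norms are equivalent: if $u$ is supported in $\{2^m\le|\cdot|<2^M\}$, then $2^{k\alpha q}$ is bounded above and below for $m\le k\le M$, so $\|u\|_{\dot{K}_p^{\alpha,q}(\Omega)}^q\asymp\sum_{k=m}^M\|u\chi_\Omega\chi_k\|_p^q$; as this is a finite sum, the $\ell^q$ and $\ell^p$ norms of the sequence $(\|u\chi_\Omega\chi_k\|_p)_k$ are comparable, and $\sum_k\|u\chi_\Omega\chi_k\|_p^p=\|u\chi_\Omega\|_p^p$ by disjointness of the $C_k$. Hence $\|u\|_{\dot{K}_p^{\alpha,q}(\Omega)}\asymp\|u\chi_\Omega\|_{L^p(\Omega)}$, with a constant depending only on $N$, $\alpha$, $p$, $q$.

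With this equivalence available, I would invoke $p<\infty$, so that $C_c(\Omega)$ is dense in $L^p(\Omega)$, to select $h\in C_c(\Omega)$ with $\|f_N\chi_\Omega-h\|_{L^p(\Omega)}$ as small as desired. To confine the approximant to a bounded annulus I would fix a continuous cutoff $\psi$ equal to $1$ on $\{2^{-N-1}\le|\cdot|\le2^N\}$ and supported in $\{2^{-N-2}<|\cdot|<2^{N+1}\}$, and set $g=\psi h\in C_c(\Omega)$. Since $\psi\equiv1$ on the support of $f_N$, we obtain $\|f_N\chi_\Omega-g\|_{L^p(\Omega)}=\|\psi(f_N\chi_\Omega-h)\|_{L^p(\Omega)}\le\|f_N\chi_\Omega-h\|_{L^p(\Omega)}$. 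Because both $f_N$ and $g$ are now supported in the fixed bounded annulus $\{2^{-N-2}\le|\cdot|\le2^{N+1}\}$, the norm equivalence converts this $L^p$-estimate into $\|f_N-g\|_{\dot{K}_p^{\alpha,q}(\Omega)}<\epsilon/2$, and combining the two bounds yields $\|f-g\|_{\dot{K}_p^{\alpha,q}(\Omega)}<\epsilon$.

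The tail estimate and the norm equivalence are routine; the one point demanding care is the cutoff step. Its purpose is precisely to ensure that the $L^p$-approximant has support confined to a bounded annulus, so that the equivalence applies to $f_N-g$ rather than merely to $f_N$, while keeping $g$ compactly supported inside $\Omega$. This is the main obstacle, and it is resolved cleanly by the multiplier $\psi$.
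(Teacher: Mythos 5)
Your proof is correct, but it takes a genuinely different route from the paper's. Both arguments open with the same move — using $q<\infty$ to discard all but finitely many annuli — but they diverge after that. The paper follows the Adams--Fournier template for Sobolev spaces: it reduces to nonnegative $f$, approximates by an increasing sequence of simple functions (dominated convergence in the Herz norm), and then invokes Lusin's theorem to replace the truncated simple function by a continuous compactly supported one differing from it only on a set of small measure, the smallness calibrated against the finitely many weights $2^{k\alpha q}$. You instead observe that for functions supported on a fixed bounded annulus away from the origin the Herz norm is equivalent to the $L^{p}$ norm (finitely many comparable weights, plus equivalence of $\ell^{p}$ and $\ell^{q}$ over a finite index set, plus disjointness of the $C_k$), which lets you import the classical density of $C_{c}(\Omega)$ in $L^{p}(\Omega)$ wholesale. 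The cutoff $\psi$ is then genuinely necessary, and correctly supplied: without it the $L^{p}$-approximant could have mass on infinitely many annuli near the origin, where $2^{k\alpha q}$ blows up when $\alpha<0$, and the norm equivalence would not apply to the difference $f_N-g$. Your version is shorter and more modular, outsourcing all the measure theory to a standard fact; the paper's is self-contained down to the level of simple functions. Both are valid proofs of the theorem.
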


\begin{proof}
First observe that $C_{c}(\Omega )\subset \dot{K}_{p}^{\alpha ,q}(\Omega )$
if and only if $\alpha >-\frac{n}{p}$. Indeed, let $\varphi \in C_{c}(\Omega
)$ be such that $\varphi (x)=1,x\in B(0,2^{N})\subset \Omega ,N\in \mathbb{Z}
$. We have%
\begin{align*}
\big\|\varphi \big\|_{\dot{K}_{p}^{\alpha ,q}(\Omega )}^{q}&
=\sum_{k=-\infty }^{\infty }2^{k\alpha q}\big\|\varphi \chi _{\Omega \cap
R_{k}}\big\|_{p}^{q} \\
& \geq \sum_{k=-\infty }^{N}2^{k\alpha q}\big\|\chi _{B(0,2^{N})\cap R_{k}}%
\big\|_{p}^{q} \\
& =\sum_{k=-\infty }^{N}2^{k\alpha q}\big\|\chi _{R_{k}}\big\|_{p}^{q} \\
& =c\sum_{k=-\infty }^{N}2^{k(\alpha +\frac{n}{p})q}
\end{align*}%
and this series is divergent if $\alpha \leq -\frac{n}{p}$. It is clear that 
$C_{c}(\Omega )\subset \dot{K}_{p}^{\alpha ,q}(\Omega )$ whenever $\alpha >-%
\frac{n}{p}$. Let $\dot{K}_{p,c}^{\alpha ,q}(\Omega )$ be the set of all $%
g\in \dot{K}_{p}^{\alpha ,q}(\Omega )$ such that $g=0$ outside a compact. As
in \cite[Proposition 3.1]{Zuily}\ we obtain that $\dot{K}_{p,c}^{\alpha
,q}(\Omega )$ is dense in $\dot{K}_{p}^{\alpha ,q}(\Omega )$. Therefore we
prove the density of $C_{c}(\Omega )$ in $\dot{K}_{p,c}^{\alpha ,q}(\Omega )$%
. Let $f\in \dot{K}_{p,c}^{\alpha ,q}(\Omega )$ with $f(x)=0$ if $x\notin
A\subset \Omega $ compact. As in \cite[Theorem 2.19]{AdamsFournier03}, the
proof can be restricted to the case $f$ is real-valued and nonnegative.
Since $f$ is measurable, there exists a monotonically increasing sequence $%
\{u_{i}\}_{i\in \mathbb{N}_{0}}$ of nonnegative simple functions converging
pointwise to $f$ on $\Omega $ and%
\begin{equation*}
0\leq u_{i}\leq f,\quad i\in \mathbb{N}_{0}.
\end{equation*}%
Since 
\begin{equation*}
0\leq f-u_{i}\leq f,\quad i\in \mathbb{N}_{0},
\end{equation*}%
by dominated convergence theorem $\{u_{i}\}_{i\in \mathbb{N}_{0}}$ converge
to $f$ in $\dot{K}_{p}^{\alpha ,q}(f)$. Therefore we find an $u\in
\{u_{i}\}_{i\in \mathbb{N}_{0}}$ such that%
\begin{equation*}
\big\|f-u\big\|_{\dot{K}_{p}^{\alpha ,q}(\Omega )}<\frac{\varepsilon }{2}.
\end{equation*}%
Since $0\leq u\leq f$, $\mathrm{supp}u\subset A$. Let $\theta >0$ be such
that $\max (0,\frac{-\alpha p}{n})<\theta <1$. Assume that $A\subset
V\subset \bar{V}\subset \Omega $ with $\bar{V}$ compact. We set%
\begin{equation*}
E=\sum_{k\in \mathbb{Z}:R_{k}\cap \bar{V}\neq \emptyset }2^{k\alpha
q}|R_{k}|^{\frac{\theta q}{p}}.
\end{equation*}%
By Lusin's theorem we can find that $\varphi \in C_{c}(\Omega )$ such that%
\begin{equation*}
|\varphi (x)|\leq \big\|u\big\|_{\infty }
\end{equation*}%
for any $x\in \Omega $, supp$\varphi \subset \bar{V}$ and%
\begin{equation*}
|H|\leq \Big(\frac{\varepsilon }{4\big\|u\big\|_{\infty }E^{\frac{1}{q}}}%
\Big)^{\frac{p}{1-\theta }},
\end{equation*}%
where $H=\{x\in \Omega :\varphi (x)\neq u(x)\}$. We set $B=\{x\in \bar{V}%
:\varphi (x)\neq u(x)\}$. Observe that $H=B$. We have%
\begin{align*}
\big\|u-\varphi \big\|_{\dot{K}_{p}^{\alpha ,q}(\Omega )}^{q}&
=\sum_{k=-\infty }^{\infty }2^{k\alpha q}\big\|(u-\varphi )\chi _{R_{k}\cap
\Omega }\big\|_{p}^{q} \\
& =\sum_{k=-\infty }^{\infty }2^{k\alpha q}\big\|(u-\varphi )\chi
_{R_{k}\cap H}\big\|_{p}^{q} \\
& =\sum_{k=-\infty }^{\infty }2^{k\alpha q}\big\|(u-\varphi )\chi
_{R_{k}\cap B}\big\|_{p}^{q}.
\end{align*}%
Therefore%
\begin{equation*}
\big\|u-\varphi \big\|_{\dot{K}_{p}^{\alpha ,q}(\Omega )}^{q}=\sum_{k\in 
\mathbb{Z}:R_{k}\cap B\neq \emptyset }2^{k\alpha q}\big\|(u-\varphi )\chi
_{R_{k}\cap B}\big\|_{p}^{q}.
\end{equation*}%
Let $k\in \mathbb{Z}$ be such that $R_{k}\cap B\neq \emptyset $. Then%
\begin{align*}
\big\|(u-\varphi )\chi _{R_{k}\cap B}\big\|_{p}& \leq 2\big\|u\big\|_{\infty
}\big\|\chi _{R_{k}\cap B}\big\|_{p} \\
& =2\big\|u\big\|_{\infty }\big\|\chi _{R_{k}\cap B}\big\|_{p}^{1-\theta }%
\big\|\chi _{R_{k}\cap B}\big\|_{p}^{\theta } \\
& \leq 2\big\|u\big\|_{\infty }\big\|\chi _{R_{k}}\big\|_{p}^{\theta }\big\|%
\chi _{B}\big\|_{p}^{1-\theta } \\
& \leq 2|R_{k}|^{\frac{\theta }{p}}\big\|u\big\|_{\infty }|B|^{\frac{%
1-\theta }{p}}.
\end{align*}%
Consequently, 
\begin{align*}
\big\|u-\varphi \big\|_{\dot{K}_{p}^{\alpha ,q}(\Omega )}^{q}& \leq 2^{q}%
\text{ }E\big\|u\big\|_{\infty }^{q}|B|^{\frac{1-\theta }{p}q} \\
& <(\frac{\varepsilon }{2})^{q},\quad \varepsilon >0
\end{align*}%
and that ends the proof.
\end{proof}

\begin{theorem}
\label{K-separable}Let $\Omega $ be open,\ $1\leq p<\infty ,\ 1\leq q<\infty 
$ and $\alpha >-\frac{n}{p}$. Then $\dot{K}_{p}^{\alpha ,q}(\Omega )$ is
separable.
\end{theorem}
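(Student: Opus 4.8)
The plan is to realize $\dot{K}_{p}^{\alpha ,q}(\Omega )$ as an isometric copy of a weighted $\ell ^{q}$-direct sum of Lebesgue spaces over the annuli $C_{k}$, and then to invoke the standard fact that such a sum of separable spaces is separable. Writing $\Omega _{k}=C_{k}\cap \Omega $ and observing that the sets $\{\Omega _{k}\}_{k\in \mathbb{Z}}$ are pairwise disjoint with $\bigcup _{k}\Omega _{k}=\Omega \setminus \{0\}$, I introduce the normed space
\[
E=\Big\{(g_{k})_{k\in \mathbb{Z}}:g_{k}\in L^{p}(\Omega _{k}),\ \|(g_{k})\|_{E}:=\Big(\sum_{k\in \mathbb{Z}}2^{k\alpha q}\|g_{k}\|_{p}^{q}\Big)^{1/q}<\infty \Big\}.
\]
By the very definition of the Herz norm, the linear map $Tf=(f\chi _{k})_{k\in \mathbb{Z}}$ is an isometry of $\dot{K}_{p}^{\alpha ,q}(\Omega )$ into $E$. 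Since an isometry is a homeomorphism onto its image, and since a subspace of a separable metric space is separable, it suffices to prove that $E$ is separable.

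To that end, I first note that each summand $L^{p}(\Omega _{k})$ is separable, because $1\leq p<\infty $ and $\Omega _{k}$ is a bounded measurable subset of $\mathbb{R}^{n}$; fix, for every $k$, a countable dense set $D_{k}\subset L^{p}(\Omega _{k})$ containing $0$. I then consider the collection $\mathcal{D}$ of all finitely supported sequences $(d_{k})_{k}$ with $d_{k}\in D_{k}$ for every $k$ and $d_{k}=0$ for all but finitely many $k$. Being a countable union over $N\in \mathbb{N}$ of the finite products $\prod _{|k|\leq N}D_{k}$, the set $\mathcal{D}$ is countable.

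For density, take $(g_{k})\in E$ and $\varepsilon >0$. The assumption $q<\infty $ ensures that the tail $\sum _{|k|>N}2^{k\alpha q}\|g_{k}\|_{p}^{q}$ can be made smaller than $(\varepsilon /2)^{q}$ by choosing $N$ large; for the finitely many indices $|k|\leq N$ I approximate each $g_{k}$ within $D_{k}$ so that $\sum _{|k|\leq N}2^{k\alpha q}\|g_{k}-d_{k}\|_{p}^{q}<(\varepsilon /2)^{q}$, which is possible since each weight $2^{k\alpha }$ is a finite constant. Setting $d_{k}=0$ for $|k|>N$, the two pieces of $(g_{k})-(d_{k})$ have disjoint index supports, so $\|(g_{k})-(d_{k})\|_{E}^{q}$ is the sum of the two estimates above, each less than $(\varepsilon /2)^{q}$, whence $\|(g_{k})-(d_{k})\|_{E}<\varepsilon $. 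Thus $\mathcal{D}$ is dense in $E$, so $E$ is separable, and therefore so is $\dot{K}_{p}^{\alpha ,q}(\Omega )$.

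The argument is essentially routine; the two finiteness assumptions are exactly what it consumes, namely $p<\infty $ for separability of the $L^{p}$ fibres and $q<\infty $ for the tail truncation. The only point needing care is the verification that $T$ maps isometrically into $E$, i.e.\ that the disjointness and covering properties of the annuli make the Herz norm literally equal to $\|Tf\|_{E}$, which is immediate here. An alternative route would use Theorem~\ref{dense}: since $C_{c}(\Omega )$ is dense (and $\alpha >-\tfrac{n}{p}$ guarantees $C_{c}(\Omega )\subset \dot{K}_{p}^{\alpha ,q}(\Omega )$), it would suffice to exhibit a countable family dense in $C_{c}(\Omega )$ for the Herz norm; the direct-sum argument above is cleaner and avoids that reduction.
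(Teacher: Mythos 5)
Your argument is correct, and it is genuinely different from the one in the paper. You embed $\dot{K}_{p}^{\alpha ,q}(\Omega )$ isometrically into the weighted $\ell ^{q}$-direct sum $E$ of the fibres $L^{p}(C_{k}\cap \Omega )$ and quote hereditary separability of separable metric spaces, reducing everything to two standard finiteness facts: separability of $L^{p}$ for $p<\infty $ and tail truncation for $q<\infty $. The paper instead works concretely: it first reduces to $\Omega =\mathbb{R}^{n}$, invokes Theorem \ref{dense} to approximate by a compactly supported continuous $\varphi $, and then approximates $\varphi $ by rational-valued averages over dyadic cubes, controlling the error by a sum of the form $\sum_{2^{k}\lesssim (1+|z|)2^{J}}2^{k(\alpha +\frac{n}{p})q}$, which is where the hypothesis $\alpha >-\frac{n}{p}$ enters. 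Your route buys simplicity and strictly more generality --- it never uses $\alpha >-\frac{n}{p}$, so it establishes separability for every $\alpha \in \mathbb{R}$ --- at the cost of not exhibiting an explicit countable dense family of functions inside the space (your dense set lives in $E$, and only its intersection with the image of $T$ is relevant); the paper's proof, by contrast, produces a concrete dense family of rational dyadic step functions, in the spirit of the classical $L^{p}$ argument. Both proofs are sound; if you wanted a fully self-contained write-up you would only need to record the (standard) facts that $L^{p}(\Omega _{k})$ is separable for $1\leq p<\infty $ and that subspaces of separable metric spaces are separable.
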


\begin{proof}
As in \cite[Lemma 2.17]{Lieb-Ross} it suffices to prove the theorem for $%
\Omega =\mathbb{R}^{n}$. For $j\in \mathbb{N}$ and $m=(m_{1},...,m_{n})\in 
\mathbb{Z}^{n}$ let%
\begin{equation*}
Q_{j,m}=\big\{x\in \mathbb{R}^{n}:2^{-j}m_{i}\leq x<2^{-j}(m_{i}+1),i=1,...,n%
\big\}
\end{equation*}%
be the dyadic cube. Put%
\begin{equation*}
F_{j}=\big\{f:f=\sum_{m\in \mathbb{Z}^{n}}a_{j,m}\chi _{Q_{j,m}},a_{j,m}\in 
\mathbb{Q}\big\},\quad j\in \mathbb{N},
\end{equation*}%
where $a_{j,m}=0$ if $|m|\geq N,N\in \mathbb{N}$. We have $F=\cup _{j\in 
\mathbb{N}}F_{j}$, is a countable set. Let $f\in \dot{K}_{p}^{\alpha ,q}(%
\mathbb{R}^{n})$ and $\varepsilon >0$. From Theorem \ref{dense} there exists 
$\varphi \in C_{c}(\mathbb{R}^{n})$ such that%
\begin{equation*}
\big\|f-\varphi \big\|_{\dot{K}_{p}^{\alpha ,q}(\mathbb{R}^{n})}\leq \frac{%
\varepsilon }{2}.
\end{equation*}%
Assume that $\mathrm{supp}\varphi \subset Q_{-J,z}$, $J\in \mathbb{N},z\in 
\mathbb{Z}^{n}$ with $J$ large enough. Let $j\in \mathbb{N},m\in \mathbb{Z}%
^{n}$ and%
\begin{equation*}
\varphi _{j,m}(x)=\left\{ 
\begin{array}{ccc}
2^{-jn}\int_{Q_{j,m}}\varphi (y)dy, & \text{if} & x\in Q_{j,m}\subseteq
Q_{-J,z}, \\ 
0, & \text{if} & x\in Q_{j,m}\nsubseteq Q_{-J,z}\text{ or }x\notin Q_{j,m}.%
\end{array}%
\right.
\end{equation*}%
Observe that%
\begin{equation*}
\big\|\varphi -\varphi _{j,m}\big\|_{\dot{K}_{p}^{\alpha ,q}(\mathbb{R}%
^{n})}^{q}=\sum_{k=-\infty }^{\infty }2^{k\alpha q}\big\|(\varphi -\varphi
_{j,m})\chi _{k}\big\|_{p}^{q}.
\end{equation*}%
But%
\begin{equation*}
\big\|(\varphi -\varphi _{j,m})\chi _{k}\big\|_{p}^{p}=\int_{Q_{-J,z}}|%
\varphi (x)-\varphi _{j,m}(x)|^{p}\chi _{k}(x)dx
\end{equation*}%
for any $j\in \mathbb{N}$ and any $m\in \mathbb{Z}^{n}$. Since $\varphi $ is
uniformly continuous on $Q_{-J,z}$, for each $\varepsilon ^{\prime }>0$
there is a $\delta >0$ such that%
\begin{equation*}
|\varphi (x)-\varphi (y)|<(\varepsilon ^{\prime })^{p}
\end{equation*}%
whenever $|x-y|<\delta $. Let $x\in Q_{-J,z}$. We can find a dyadic cube $%
Q_{j,m_{1}}$ such that $x\in Q_{j,m_{1}}\subseteq Q_{-J,z}$ for any $j\in 
\mathbb{N}$. We have%
\begin{equation*}
|\varphi (x)-\varphi _{j,m_{1}}(x)|\leq 2^{-jn}\int_{Q_{j,m_{1}}}|\varphi
(x)-\varphi (y)|dy,\quad x\in Q_{j,m_{1}}\subseteq Q_{-J,z}
\end{equation*}%
for any $j\in \mathbb{N}$. Taking $j$ large enough be such that $|x-y|\leq 
\sqrt{n}2^{-j}\leq \delta $, $x,y\in Q_{j,m_{1}}$. Let $j_{1}$ one of them.
Therefore%
\begin{equation*}
|\varphi (x)-\varphi _{j_{1},m_{1}}(x)|<(\varepsilon ^{\prime })^{p},\quad
x\in Q_{j_{1},m_{1}}\subseteq Q_{-J,z}.
\end{equation*}%
Hence%
\begin{align*}
\big\|\varphi -\varphi _{j_{1},m_{1}}\big\|_{\dot{K}_{p}^{\alpha ,q}(\mathbb{%
R}^{n})}^{q} &=\sum_{2^{k}\lesssim (1+|z|)2^{J}}2^{k\alpha q}\big\|(\varphi
-\varphi _{j_{1},m_{1}})\chi _{R_{k}\cap Q_{-J,z}}\big\|_{p}^{q} \\
&\leq \sum_{2^{k}\lesssim (1+|z|)2^{J}}2^{k(\alpha +\frac{n}{p})q}\sup_{x\in
Q_{j_{1},m_{1}}}|\varphi (x)-\varphi _{j_{1},m_{1}}(x)|^{\frac{q}{p}} \\
&\leq c(\varepsilon ^{\prime })^{q}((1+|z|)2^{J})^{(\alpha +\frac{n}{p})q},
\end{align*}%
with the help of the fact that $\alpha >-\frac{n}{p}$. Since $\varphi
_{j_{1},m_{1}}(x)\in \mathbb{R}$ we can find $\tilde{\varphi}%
_{j_{1},m_{1}}(x)\in \mathbb{Q}$\ be such that 
\begin{equation*}
|\varphi _{j_{1},m_{1}}(x)-\tilde{\varphi}_{j_{1},m_{1}}(x)|<\varepsilon
^{\prime },\quad x\in Q_{j_{1},m_{1}}\subseteq Q_{-J,z}.
\end{equation*}%
Now%
\begin{align*}
\big\|\varphi -\tilde{\varphi}_{j_{1},m_{1}}\big\|_{\dot{K}_{p}^{\alpha ,q}(%
\mathbb{R}^{n})} &\leq \big\|\varphi -\varphi _{j_{1},m_{1}}\big\|_{\dot{K}%
_{p}^{\alpha ,q}(\mathbb{R}^{n})}+\big\|\tilde{\varphi}_{j_{1},m_{1}}-%
\varphi _{j_{1},m_{1}}\big\|_{\dot{K}_{p}^{\alpha ,q}(\mathbb{R}^{n})} \\
&\leq C\varepsilon ^{\prime }((1+|z|)2^{J})^{(\alpha +\frac{n}{p})q}.
\end{align*}%
We choose $\varepsilon ^{\prime }$ be such that $C\varepsilon ^{\prime
}((1+|z|)2^{J})^{\alpha +\frac{n}{p}}<\frac{\varepsilon }{2}$, which yields
that%
\begin{equation*}
\big\|f-\tilde{\varphi}_{j_{1},m_{1}}\big\|_{\dot{K}_{p}^{\alpha ,q}(\mathbb{%
R}^{n})}\leq \varepsilon .
\end{equation*}%
This completes the proof.
\end{proof}

Let $J\in \mathcal{D}(\mathbb{R}^{n})$ be a real-valued function with 
\begin{equation*}
J(x)\geq 0,\quad \text{if}\quad x\in \mathbb{R}^{n},\quad J(x)=0\quad \text{%
if}\quad x\in \overline{B(0,1)}\quad \text{and}\quad \int_{\mathbb{R}%
^{n}}J(x)dx=1.
\end{equation*}%
We put $J_{\varepsilon }(x)=\varepsilon ^{-n}J(\frac{x}{\varepsilon }),$ $%
x\in \mathbb{R}^{n}$.

\begin{theorem}
\label{approximation1}Let $\Omega \subset \mathbb{R}^{n}$ be open, $0\in
\Omega ,1\leq p<\infty ,\ 1\leq q<\infty $ and $-\frac{n}{p}<\alpha <n-\frac{%
n}{p}$. Let $f\in \dot{K}_{p}^{\alpha ,q}(\Omega )\ $be a function defined
on $\mathbb{R}^{n}$ and vanishes identically outside $\Omega $. Then%
\begin{equation}
\lim_{\varepsilon \rightarrow 0_{+}}\big\|J_{\varepsilon }\ast f-f\big\|_{%
\dot{K}_{p}^{\alpha ,q}(\Omega )}=0.  \label{whatweneed}
\end{equation}
\end{theorem}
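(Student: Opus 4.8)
The plan is to combine the density statement of Theorem~\ref{dense} with a bound for the convolution operators $f\mapsto J_{\varepsilon}\ast f$ that is uniform in $\varepsilon$. Fix $\eta>0$. By Theorem~\ref{dense} there is $\varphi\in C_{c}(\Omega)$, extended by zero outside $\Omega$, with $\big\|f-\varphi\big\|_{\dot{K}_{p}^{\alpha,q}(\Omega)}<\eta$. I would then decompose
\[
J_{\varepsilon}\ast f-f=J_{\varepsilon}\ast(f-\varphi)+\big(J_{\varepsilon}\ast\varphi-\varphi\big)+(\varphi-f)
\]
and estimate the three summands separately. The last one is already $<\eta$, so the task is to bound the first uniformly in $\varepsilon$ and to show that the second tends to $0$ as $\varepsilon\to0_{+}$ for the fixed $\varphi$. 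Throughout I use that passing from the $\mathbb{R}^{n}$ norm to the $\Omega$ norm only decreases it, since $|g\chi_{\Omega}|\le|g|$.

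For the first summand the decisive point is a pointwise domination by the maximal function. As $J\in\mathcal{D}(\mathbb{R}^{n})$ is nonnegative, bounded and compactly supported, there are $R,C>0$ with $J_{\varepsilon}(y)\le C\varepsilon^{-n}\chi_{B(0,R\varepsilon)}(y)$, so that
\[
\big|(J_{\varepsilon}\ast g)(x)\big|\le C\varepsilon^{-n}\int_{B(x,R\varepsilon)}|g(z)|\,dz\le c\,\mathcal{M}g(x),\qquad x\in\mathbb{R}^{n},
\]
with $c$ independent of $\varepsilon$. Taking $g=f-\varphi$ and applying Lemma~\ref{Maximal-Inq} yields $\big\|J_{\varepsilon}\ast(f-\varphi)\big\|_{\dot{K}_{p}^{\alpha,q}(\Omega)}\le c\big\|\mathcal{M}(f-\varphi)\big\|_{\dot{K}_{p}^{\alpha,q}(\mathbb{R}^{n})}\le c\big\|f-\varphi\big\|_{\dot{K}_{p}^{\alpha,q}(\Omega)}<c\,\eta$, uniformly in $\varepsilon$. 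This is the step that forces the weight range $-\frac{n}{p}<\alpha<n(1-\frac1p)$ through Lemma~\ref{Maximal-Inq}.

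For the second summand I would invoke the classical fact that $J_{\varepsilon}\ast\varphi\to\varphi$ uniformly, while all differences $\psi_{\varepsilon}:=J_{\varepsilon}\ast\varphi-\varphi$ are supported in one fixed compact set $K$ (a neighbourhood of $\operatorname{supp}\varphi$) once $\varepsilon\le1$. Hence
\[
\big\|\psi_{\varepsilon}\big\|_{\dot{K}_{p}^{\alpha,q}(\Omega)}^{q}\le\big\|\psi_{\varepsilon}\big\|_{\infty}^{q}\sum_{k:\,C_{k}\cap K\neq\emptyset}2^{k\alpha q}|C_{k}|^{q/p},
\]
and the series converges: only finitely many annuli with large $k$ meet the bounded set $K$, whereas the small-$k$ annuli near the origin contribute a tail comparable to $\sum 2^{k(\alpha+\frac{n}{p})q}$, which is summable precisely because $\alpha>-\frac{n}{p}$. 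Therefore $\big\|\psi_{\varepsilon}\big\|_{\dot{K}_{p}^{\alpha,q}(\Omega)}\to0$. Combining the three bounds gives $\limsup_{\varepsilon\to0_{+}}\big\|J_{\varepsilon}\ast f-f\big\|_{\dot{K}_{p}^{\alpha,q}(\Omega)}\le(c+1)\eta$, and letting $\eta\to0$ proves \eqref{whatweneed}.

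The main obstacle is the uniform-in-$\varepsilon$ control of $J_{\varepsilon}\ast(f-\varphi)$ for the rough remainder $f-\varphi$: it is here that one cannot argue by uniform continuity, and the maximal-function domination together with Lemma~\ref{Maximal-Inq} is what resolves it, at the cost of effectively restricting $\alpha$ to the range in which $\mathcal{M}$ is bounded on $\dot{K}_{p}^{\alpha,q}$. The other two pieces are soft: one is the defining approximation, the other follows from uniform convergence and the summability of the weights $2^{k\alpha q}|C_{k}|^{q/p}$ near the origin.
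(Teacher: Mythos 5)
Your argument for the core case coincides with the paper's: Theorem~\ref{dense} to reduce to $\varphi\in C_{c}(\Omega)$, the pointwise domination of $J_{\varepsilon}\ast(f-\varphi)$ by the maximal function together with Lemma~\ref{Maximal-Inq} for the rough part, and uniform continuity plus summability of the weights near the origin for the smooth part (the paper's Substep 2.1 uses the slightly refined bound $|J_{\varepsilon}\ast g|\le c\,\bigl(\mathcal{M}(|g|^{t})\bigr)^{1/t}$ with $t>1$, but the mechanism is the same). The problem is that this only proves the theorem on a strict subrange of the stated hypotheses. The theorem asserts the conclusion for \emph{every} $\alpha\in\mathbb{R}$ and every $1\le p<\infty$, whereas both quantitative steps of your proof impose restrictions: Lemma~\ref{Maximal-Inq} requires $1<p<\infty$ and $-\frac{n}{p}<\alpha<n(1-\frac{1}{p})$ (and these conditions for the boundedness of $\mathcal{M}$ on $\dot{K}_{p}^{\alpha,q}$ are genuinely needed, not an artifact), and your convergence of $\sum_{k}2^{k(\alpha+\frac{n}{p})q}$ over the annuli meeting a neighbourhood of the origin again forces $\alpha>-\frac{n}{p}$. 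You flag this restriction in your closing paragraph but do not repair it, so the cases $\alpha\le-\frac{n}{p}$ and $p=1$ (and, as written, also large positive $\alpha$) are left unproved; this is a genuine gap relative to the statement.

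To close it along the paper's lines you need three further devices. For $\alpha<-\frac{n}{p}$ the paper argues by duality: it writes $\|J_{\varepsilon}\ast\varphi-\varphi\|_{\dot{K}_{p}^{\alpha,q}(\Omega)}$ as a supremum of $\int(J_{\varepsilon}\ast\varphi-\varphi)g$ over $g$ with $\|g\|_{\dot{K}_{p'}^{-\alpha,q'}(\Omega)}=1$, moves the (reflected) mollifier onto $g$, and uses that $-\alpha>-\frac{n}{p'}$ places $g$ back in the good range. For the borderline $\alpha=-\frac{n}{p}$ it interpolates by H\"older between two exponents $\alpha_{0}>-\frac{n}{p}$ and $\alpha_{1}<-\frac{n}{p}$. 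For $p=1$ it uses H\"older's inequality on each annulus to get $\|J_{\varepsilon}\ast f-f\|_{\dot{K}_{1}^{\alpha,q}(\Omega)}\le\|J_{\varepsilon}\ast f-f\|_{\dot{K}_{s}^{\alpha+n-\frac{n}{s},q}(\Omega)}$ with $s>1$ and invokes the case already proved. None of these appears in your proposal, and without them the stated theorem is not established.
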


\begin{proof}
We will do the proof into two steps.

\textbf{Step 1.} We will prove that 
\begin{equation}
\lim_{\varepsilon \rightarrow 0_{+}}\big\|J_{\varepsilon }\ast \varphi
-\varphi \big\|_{\dot{K}_{p}^{\alpha ,q}(\Omega )}=0  \label{limit1}
\end{equation}%
for any $\varphi \in C_{c}(\Omega )$ and $\alpha \in \mathbb{R}$.

\textbf{Substep 1.1.} $\alpha >-\frac{n}{p}$. Assume that $\mathrm{supp}%
\varphi \subset B(0,2^{N})\subset \Omega $, $N\in \mathbb{N}$. Using the
fact that $|x-y|>2^{N}\geq \varepsilon $ for any $x\in \mathbb{R}%
^{n}\backslash B(0,2^{N+1})$ and any $y\in B(0,2^{N})$ we obtain 
\begin{equation*}
J_{\varepsilon }\ast \varphi (x)=0,\quad x\in \mathbb{R}^{n}\backslash
B(0,2^{N+1}),\quad \varepsilon \leq 2^{N},
\end{equation*}%
which yields 
\begin{equation*}
\big\|(J_{\varepsilon }\ast \varphi -\varphi )\chi _{k}\big\|_{L^{p}(\Omega
)}^{p}=\int_{\Omega \cap R_{k}\cap B(0,2^{N+1})}\big|J_{\varepsilon }\ast
\varphi (x)-\varphi (x)\big|^{p}dx,\quad k\in \mathbb{Z}.
\end{equation*}%
Observe that%
\begin{equation*}
J_{\varepsilon }\ast \varphi (x)-\varphi (x)=\int_{B(0,1)}J(z)(\varphi
(x-\varepsilon z)-\varphi (x))dz.
\end{equation*}%
Therefore%
\begin{equation*}
|J_{\varepsilon }\ast \varphi (x)-\varphi (x)|\leq \sup_{z\in
B(0,1)}|\varphi (x-\varepsilon z)-\varphi (x)|
\end{equation*}%
which tend to zero as $\varepsilon \rightarrow 0$. Hence%
\begin{align*}
\big\|J_{\varepsilon }\ast \varphi -\varphi \big\|_{\dot{K}_{p}^{\alpha
,q}(\Omega )}^{q} &\leq \sum_{k=-\infty }^{N+2}2^{k\alpha q}\big\|%
(J_{\varepsilon }\ast \varphi -\varphi )\chi _{k}\big\|_{p}^{q} \\
&\leq \sup_{|x|\leq 2^{N+2}}\sup_{z\in B(0,1)}|\varphi (x-\varepsilon
z)-\varphi (x)|^{q}\sum_{k=-\infty }^{N+2}2^{k(\alpha +\frac{n}{p})q} \\
&\lesssim \sup_{|x|\leq 2^{N+2}}\sup_{z\in B(0,1)}|\varphi (x-\varepsilon
z)-\varphi (x)|^{q}.
\end{align*}%
Letting $\varepsilon $ tend to zero, we obtain \eqref{limit1} for any $%
\varphi \in C_{c}(\Omega )$ and $\alpha >-\frac{n}{p}$.

\textbf{Substep 1.2.} $\alpha <-\frac{n}{p}$. By duality%
\begin{equation*}
\big\|J_{\varepsilon }\ast \varphi -\varphi \big\|_{\dot{K}_{p}^{\alpha
,q}(\Omega )}=\sup \Big|\int_{\Omega }(J_{\varepsilon }\ast \varphi
(x)-\varphi (x))g(x)dx\Big|,
\end{equation*}%
where the supremum is taken over all continuous functions of compact support 
$g$ such that $\big\|g\big\|_{\dot{K}_{p^{\prime }}^{-\alpha ,q\prime
}(\Omega )}=1$. It is easily seen that%
\begin{equation*}
\int_{\Omega }(J_{\varepsilon }\ast \varphi (x)-\varphi
(x))g(x)dx=\int_{\Omega }(\tilde{J}_{\varepsilon }\ast g(x)-g(x))\varphi
(x)dx,
\end{equation*}%
where $\tilde{J}_{\varepsilon }(x)=J_{\varepsilon }(-x),x\in \mathbb{R}^{n}$%
. We have%
\begin{equation*}
\Big|\int_{\Omega }(\tilde{J}_{\varepsilon }\ast g(x)-g(x))\varphi (x)dx\Big|%
\leq \big\|\tilde{J}_{\varepsilon }\ast g-g\big\|_{\dot{K}_{p^{\prime
}}^{-\alpha ,q^{\prime }}(\Omega )}\big\|\varphi \big\|_{\dot{K}_{p}^{\alpha
,q}(\Omega )}.
\end{equation*}%
Observe that $-\alpha >-\frac{n}{p^{\prime }}$. Using Substep 1.1, we see
that%
\begin{equation*}
\big\|\tilde{J}_{\varepsilon }\ast g-g\big\|_{\dot{K}_{p^{\prime }}^{-\alpha
,q^{\prime }}(\Omega )}\leq \frac{\eta }{\big\|\varphi \big\|_{\dot{K}%
_{p}^{\alpha ,q}(\Omega )}}
\end{equation*}%
for any $\eta >0$ and any $\varepsilon $ small enough. Hence%
\begin{equation*}
\big\|J_{\varepsilon }\ast \varphi -\varphi \big\|_{\dot{K}_{p}^{\alpha
,q}(\Omega )}\leq \eta
\end{equation*}%
for any $\eta >0$ and any $\varepsilon $ small enough.

\textbf{Substep 1.3.} $\alpha =-\frac{n}{p}$. Let $\alpha _{0}>-\frac{n}{p}$
and $\alpha _{1}<-\frac{n}{p}$ be such that $\alpha =\theta \alpha
_{0}+(1-\theta )\alpha _{1},0<\theta <1$. H\"{o}lder's inequality yields%
\begin{align*}
\big\|J_{\varepsilon }\ast \varphi -\varphi \big\|_{\dot{K}_{p}^{\alpha
,q}(\Omega )} &\leq \big\|J_{\varepsilon }\ast \varphi -\varphi \big\|_{\dot{%
K}_{p}^{\alpha _{0},q}(\Omega )}^{\theta }\big\|J_{\varepsilon }\ast \varphi
-\varphi \big\|_{\dot{K}_{p}^{\alpha _{1},q}(\Omega )}^{1-\theta } \\
&\leq \eta
\end{align*}%
for any $\eta >0$ and any $\varepsilon $ small enough.

\textbf{Step 2. }We prove \eqref{whatweneed}. By Theorem \ref{dense} we can
find $\varphi \in C_{c}(\Omega )$ such that 
\begin{equation*}
\big\|f-\varphi \big\|_{\dot{K}_{p}^{\alpha ,q}(\Omega )}\leq \frac{\eta }{3}
\end{equation*}%
for any $\eta >0$ small enough. So, for any $\eta _{1}>0$ small enough 
\begin{equation*}
\big\|J_{\varepsilon }\ast f-J_{\varepsilon }\ast \varphi \big\|_{\dot{K}%
_{p}^{\alpha ,q}(\Omega )}\leq c\big\|\mathcal{M}(f-\varphi )\big\|_{\dot{K}%
_{p}^{\alpha ,q}(\mathbb{R}^{n})}\leq c\eta _{1}
\end{equation*}%
by Lemma \ref{Maximal-Inq}, because of $-\frac{n}{p}<\alpha <n-\frac{n}{p}$.
We choose $\eta _{1}$ be such that $c\eta _{1}<\frac{\eta }{3}$. From Step 1,%
\begin{equation*}
\big\|J_{\varepsilon }\ast \varphi -\varphi \big\|_{\dot{K}_{p}^{\alpha
,q}(\Omega )}\leq \frac{\eta }{3}
\end{equation*}%
by choosing $\varepsilon $ sufficiently small, which prove \eqref{whatweneed}
but with $p>1$. Let $s>1$. H\"{o}lder's inequality and the fact that $%
-n<\alpha <0$ yield%
\begin{equation*}
\big\|J_{\varepsilon }\ast f-f\big\|_{\dot{K}_{1}^{\alpha ,q}(\Omega )}\leq %
\big\|J_{\varepsilon }\ast f-f\big\|_{\dot{K}_{s}^{\alpha +n-\frac{n}{s}%
,q}(\Omega )},
\end{equation*}%
which tends to zero as $\varepsilon \rightarrow 0_{+}$.

This completes the proof.
\end{proof}

Let $1\leq q<\infty $. The Caffarelli--Kohn--Nirenberg inequality says that 
\begin{equation*}
\Big(\int_{\mathbb{R}^{n}}|x|^{\gamma p}|f(x)|^{p}dx\Big)^{\frac{1}{p}}\leq c%
\Big(\int_{\mathbb{R}^{n}}|x|^{\alpha q}|\nabla f(x)|^{q}dx\Big)^{\frac{1}{q}%
}
\end{equation*}%
for any $f\in \mathcal{D}(\mathbb{R}^{n})$, where 
\begin{equation}
\alpha >-\frac{n}{q},\quad \gamma >-\frac{n}{p},\quad \alpha -1\leq \gamma
\leq \alpha ,\quad \frac{n}{p}-\frac{n}{q}=\alpha -\gamma -1\leq 0,
\label{CKN-condition}
\end{equation}%
see \cite{CKN84}. This inequality plays an important role in theory of
function spaces and PDE's. Our aim is to extend this result to Herz spaces.

\begin{theorem}
\label{Embeddings1}Let $1\leq q\leq \frac{n}{n-1},0<r\leq \infty $\ and%
\begin{equation*}
\alpha _{2}+n-1=\alpha _{1}+\frac{n}{q}>0.
\end{equation*}%
Then%
\begin{equation}
\big\|f\big\|_{\dot{K}_{q}^{\alpha _{1},r}(\mathbb{R}^{n})}\lesssim \big\|f%
\big\|_{\dot{W}_{1,1}^{\alpha _{2},r}(\mathbb{R}^{n})},\quad f\in \mathcal{D}%
(\mathbb{R}^{n}),  \label{KCN}
\end{equation}%
holds, where%
\begin{equation}
\big\|f\big\|_{\dot{W}_{1,1}^{\alpha _{2},r}(\mathbb{R}^{n})}=\Big(%
\sum\limits_{k=-\infty }^{\infty }2^{k\alpha _{2}r}\big\|(\nabla f)\chi _{k}%
\big\|_{1}^{r}\Big)^{\frac{1}{r}}.  \label{homogenous Herz-Sobolev}
\end{equation}
\end{theorem}

\begin{proof}
Since $\alpha _{2}+n>0$, \eqref{homogenous Herz-Sobolev} is well defined and
finite for any $f\in \mathcal{D}(\mathbb{R}^{n})$.\ Let 
\begin{equation*}
I_{1,k}=\big[-2^{k},2^{k}\big],\quad I_{2,k}=\big[\frac{1}{\sqrt{n}}%
2^{k-1},2^{k}\big],\quad k\in \mathbb{Z},
\end{equation*}%
and%
\begin{equation*}
I_{3,k}=\big[-2^{k},-\frac{1}{\sqrt{n}}2^{k-1}\big],\quad I_{4,k}=\Big(-%
\frac{1}{\sqrt{n}}2^{k-1},\frac{1}{\sqrt{n}}2^{k-1}\Big),\quad k\in \mathbb{Z%
}.
\end{equation*}%
We set%
\begin{equation*}
J_{k}=\cup _{i=1}^{n-1}V_{i,k}\cup V_{k},\quad k\in \mathbb{Z},
\end{equation*}%
where%
\begin{equation*}
V_{k}=\left( I_{2,k}\times (I_{4,k})^{n-1}\right) \cup \left( I_{3,k}\times
(I_{4,k})^{n-1}\right) ,\quad V_{i,k}=V_{i,k}^{1}\cup V_{i,k}^{2},\quad k\in 
\mathbb{Z},\quad i\in \{1,2,...,n-1\},
\end{equation*}%
with%
\begin{equation*}
V_{i,k}^{1}=(I_{1,k})^{n-i}\times I_{2,k}\times (I_{4,k})^{i-1}\quad \text{%
and\quad }V_{i,k}^{2}=(I_{1,k})^{n-i}\times I_{3,k}\times (I_{4,k})^{i-1}.
\end{equation*}%
If $i=1$, then we put $V_{1,k}^{1}=(I_{1,k})^{n-1}\times I_{2,k}\ $and\ $%
V_{1,k}^{2}=(I_{1,k})^{n-1}\times I_{3,k}$.

Let $x\in R_{k},k\in \mathbb{Z}$. Assume that $x$ does not belongs to the
set $J_{k}$. Then $x\notin V_{i,k}\ $and $x\notin V_{k}$ for any $i\in
\{1,2,...,n-1\}$. Since $x$ is not an element of $V_{1,k}^{1}\cup
V_{1,k}^{2} $, we have necessary that $(x_{1},...,x_{n-1})$ belongs in $%
(I_{1,k})^{n-1}$ and $x_{n}\in I_{4,k}$, otherwise $x$ is not an element of $%
R_{k}$, which is a contradiction. Assume that there exists $x_{i_{0}}\notin
I_{4,k}$ with $i_{0}\in \{2,...,n-1\}$. Observe that $x\notin
V_{n-i_{0}+1,k}^{1}\cup V_{n-i_{0}+1,k}^{2}$, which yields that%
\begin{equation*}
(x_{1},...,x_{i_{0}-1})\in (I_{1,k})^{i_{0}-1},\quad x_{i_{0}}\in
I_{2,k}\cup I_{3,k},\quad (x_{i_{0}+1},...,x_{n})\notin (I_{4,k})^{n-i_{0}}.
\end{equation*}%
Let 
\begin{equation*}
v=\max \big\{j:i_{0}\leq j<n,x_{j}\notin I_{4,k}\big\}.
\end{equation*}%
Hence 
\begin{equation}
x_{m}\in I_{4,k},\quad v+1\leq m<n.  \label{proof}
\end{equation}%
Also $x\notin V_{n-v+1,k}^{1}\cup V_{n-v+1,k}^{2}$, which yields that%
\begin{equation*}
(x_{1},...,x_{v-1})\in (I_{1,k})^{v-1},\quad x_{v}\in I_{2,k}\cup
I_{3,k},\quad (x_{v+1},...,x_{n})\notin (I_{4,k})^{n-v},
\end{equation*}%
which is a contradiction by \eqref{proof} and the fact that $x_{n}\in
I_{4,k} $. Consequently we obtain $x_{1}\in I_{1,k}$ and $%
(x_{2},...,x_{n})\in \left( I_{4,k}\right) ^{n-1}$. But $x\notin V_{k}$,
then we have $x_{1}\in I_{4,k}$, $x\in B(0,2^{k-1})$ and this is a
contradiction. Therefore%
\begin{equation*}
R_{k}\subset J_{k}\subset \tilde{R}_{k},\quad k\in \mathbb{Z},
\end{equation*}%
where $\tilde{R}_{k}=\{x\in \mathbb{R}^{n}:\frac{1}{\sqrt{n}}2^{k-3}\leq
|x|\leq \sqrt{n}2^{k+4}\}$. Let $f\in \mathcal{D}(\mathbb{R}^{n})$. We will
prove the inequality \eqref{KCN}. We write%
\begin{equation*}
\big\|f\big\|_{\dot{K}_{q}^{\alpha _{1},r}(\mathbb{R}^{n})}^{r}=\sum%
\limits_{k=-\infty }^{\infty }2^{k\alpha _{1}r}\big\|f\chi _{k}\big\|%
_{q}^{r}.
\end{equation*}%
Using H\"{o}lder's inequality we obtain%
\begin{equation*}
\big\|f\chi _{k}\big\|_{q}\leq c2^{k(\frac{n}{q}-n+1)}\big\|f\chi _{k}\big\|%
_{\frac{n}{n-1}},\quad k\in \mathbb{Z}\text{,}
\end{equation*}%
where the constant $c>0$ is independent of $k$. We have%
\begin{align*}
\int_{R_{k}}|f(x)|^{\frac{n}{n-1}}dx &\leq
\sum_{i=1}^{n-1}\int_{V_{i,k}}|f(x)|^{\frac{n}{n-1}}dx+\int_{V_{k}}|f(x)|^{%
\frac{n}{n-1}}dx \\
&\leq \sum_{i=1}^{n-1}\int_{V_{i,k}^{1}}|f(x)|^{\frac{n}{n-1}%
}dx+\sum_{i=1}^{n-1}\int_{V_{i,k}^{2}}|f(x)|^{\frac{n}{n-1}%
}dx+\int_{V_{k}}|f(x)|^{\frac{n}{n-1}}dx \\
&=\sum_{i=1}^{n-1}J_{i,k}^{1}+\sum_{i=1}^{n-1}J_{i,k}^{2}+S_{k.}.
\end{align*}%
We estimate $J_{i,k}^{1},i\in \{1,2,...,n-1\}$. Let $\omega _{1},\omega
_{2},\omega _{3}\in \mathcal{D}(\mathbb{R})$ be such that 
\begin{equation*}
\omega _{1}(y)=1\quad \text{if}\quad |y|\leq 2^{-M},\quad \omega
_{2}(y)=1\quad \text{if}\quad \frac{2^{-M-1}}{\sqrt{n}}\leq |y|\leq 2^{-M},
\end{equation*}%
\begin{equation*}
\omega _{3}(y)=1\quad \text{if}\quad |y|\leq \frac{2^{-M}}{2\sqrt{n}},
\end{equation*}
\begin{equation*}
\mathrm{supp}\omega _{1}\subset \{y\in \mathbb{R}:|y|\leq 2^{1-M}\},\quad 
\mathrm{supp}\omega _{2}\subset \{y\in \mathbb{R}:\frac{2^{-M-2}}{\sqrt{n}}%
\leq |y|\leq 2^{2-M}\}
\end{equation*}
and 
\begin{equation*}
\mathrm{supp}\omega _{3}\subset \{y\in \mathbb{R}:|y|\leq \frac{2^{1-M}}{%
\sqrt{n}}\},
\end{equation*}%
where $M>1$ will be chosen later on. Let 
\begin{equation*}
f_{k}(x)=f(x)\Pi _{j=1}^{n-i}\omega _{1}(2^{-k-M}x_{j})\omega
_{2}(2^{-k-M}x_{n-i+1})\Pi _{j=n-i+2}^{n}\omega _{3}(2^{-k-M}x_{j}),\quad
x\in \mathbb{R}^{n}.
\end{equation*}%
Obviously, if $x\in V_{i,k}^{1}$, then%
\begin{equation*}
f(x)=f_{k}(x).
\end{equation*}%
Let $x\in V_{i,k}^{1}$. Taking into account the various conditions on the
supports of $\omega _{1},\omega _{2}$ and $\omega _{3}$ we obtain%
\begin{equation*}
f(x)=\int_{-2^{k+1}}^{x_{j}}\frac{\partial f_{k}}{\partial x_{j}}%
(x_{1},...,x_{j-1},y_{j},x_{j+1},...,x_{n})dy_{j},
\end{equation*}%
which yields that%
\begin{equation*}
|f(x)|\leq \int_{-2^{k+1}}^{x_{j}}\Big|\frac{\partial f_{k}}{\partial x_{j}}%
(x_{1},...,x_{j-1},y_{j},x_{j+1},...,x_{n})\Big|dy_{j}
\end{equation*}%
for any $j\in \{1,2,...,n-i\}$. In the same way we obtain%
\begin{equation*}
|f(x)|\leq \int_{\frac{1}{6\sqrt{n}}2^{k}}^{x_{n-i+1}}\Big|\frac{\partial
f_{k}}{\partial x_{n-i+1}}(x_{1},...,x_{n-i},y_{n-i+1},x_{n-i+1},...,x_{n})%
\Big|dy_{n-i+1}
\end{equation*}%
and%
\begin{equation*}
|f(x)|\leq \int_{-\frac{1}{\sqrt{n}}2^{k+1}}^{x_{j}}\Big|\frac{\partial f_{k}%
}{\partial x_{j}}(x_{1},...,x_{n-i+1},...,x_{j-1},y_{j},x_{j+1},...,x_{n})%
\Big|dy_{j}
\end{equation*}%
for any $j\in \{n-i+2,...,n\}$. Therefore for any $x\in V_{i,k}^{1}$, $%
|f(x)|^{\frac{n}{n-1}}$ is bounded by%
\begin{align*}
&\prod_{j=1}^{n-i}\Big(\int_{-2^{k+1}}^{2^{k}}\Big|\frac{\partial f_{k}}{%
\partial x_{j}}(x_{1},...,x_{j-1},y_{j},x_{j+1},...,x_{n})\Big|dy_{j}\Big)^{%
\frac{1}{n-1}} \\
&\Big(\int_{\frac{1}{6\sqrt{n}}2^{k}}^{2^{k}}\Big|\frac{\partial f_{k}}{%
\partial x_{n-i+1}}(x_{1},...,x_{n-i},y_{n-i+1},x_{n-i+2},...,x_{n})\Big|%
dy_{n-i+1}\Big)^{\frac{1}{n-1}} \\
&\times \prod_{j=n-i+2}^{n}\Big(\int_{-\frac{1}{\sqrt{n}}2^{k+1}}^{\frac{%
2^{k-1}}{\sqrt{n}}}\Big|\frac{\partial f_{k}}{\partial x_{j}}%
(x_{1},...,x_{n-i+1},...,x_{j-1},y_{j},x_{j+1},...,x_{n})\Big|dy_{j}\Big)^{%
\frac{1}{n-1}} \\
&=\prod_{j=1}^{n-i}\big(g(x_{j}^{\prime })\big)^{\frac{1}{n-1}}\big(%
h(x_{n-i+1}^{\prime })\big)^{\frac{1}{n-1}}\prod_{j=n-i+2}^{n}\big(%
w(x_{j}^{\prime })\big)^{\frac{1}{n-1}},
\end{align*}%
where $x_{j}^{\prime }=(x_{1},...,x_{j-1},x_{j+1},...,x_{n}),j\in
\{1,...,n\} $ and $x_{n-i+1}^{\prime
}=(x_{1},...,x_{n-i},x_{n-i+2},...,x_{n})$. Integrate with respect to $x_{1}$%
, over $I_{1,k}$ to obtain $\int_{I_{1,k}}|f(x)|^{\frac{n}{n-1}}dx_{1}$ is
bounded by%
\begin{align*}
&\int_{I_{1,k}}\prod_{j=1}^{n-i}\big(g(x_{j}^{\prime })\big)^{\frac{1}{n-1}}%
\big(h(x_{n-i+1}^{\prime })\big)^{\frac{1}{n-1}}\prod_{j=n-i+2}^{n}\big(%
w(x_{j}^{\prime })\big)^{\frac{1}{n-1}}dx_{1} \\
&=\big(g(x_{1}^{\prime })\big)^{\frac{1}{n-1}}\int_{I_{1,k}}\prod_{j=2}^{n-i}%
\big(g(x_{j}^{\prime })\big)^{\frac{1}{n-1}}\big(h(x_{n-i+1}^{\prime })\big)%
^{\frac{1}{n-1}}\prod_{j=n-i+2}^{n}\big(w(x_{j}^{\prime })\big)^{\frac{1}{n-1%
}}dx_{1},
\end{align*}%
which is bounded by, after using H\"{o}lder's inequality,%
\begin{equation*}
\big(g(x_{1}^{\prime })\big)^{\frac{1}{n-1}}\prod_{j=2}^{n-i}\Big(%
\int_{I_{1,k}}g(x_{j}^{\prime })dx_{1}\Big)^{\frac{1}{n-1}}\Big(%
\int_{I_{1,k}}h(x_{n-i+1}^{\prime })dx_{1}\Big)^{\frac{1}{n-1}}\Big(%
\prod_{j=n-i+2}^{n}\int_{I_{1,k}}w_{j}(x_{j}^{\prime })dx_{1}\Big)^{\frac{1}{%
n-1}}.
\end{equation*}%
Integrate with respect to $x_{2}$, over $I_{1,k}$ and using H\"{o}lder's
inequality to obtain that $\int_{(I_{1,k})^{2}}|f(x)|^{\frac{n}{n-1}%
}dx_{1}dx_{2}$ is bounded by%
\begin{align*}
&\Big(\int_{I_{1,k}}g(x_{2}^{\prime })dx_{1}\Big)^{\frac{1}{n-1}%
}\int_{I_{1,k}}\big(g(x_{1}^{\prime })\big)^{\frac{1}{n-1}}\prod_{j=3}^{n-i}%
\Big(\int_{I_{1,k}}g(x_{j}^{\prime })dx_{1}\Big)^{\frac{1}{n-1}} \\
&\times \Big(\int_{I_{1,k}}h(x_{n-i+1}^{\prime })dx_{1}\Big)^{\frac{1}{n-1}}%
\Big(\prod_{j=n-i+2}^{n}\int_{I_{1,k}}w(x_{j}^{\prime })dx_{1}\Big)^{\frac{1%
}{n-1}}dx_{2} \\
&\leq \Big(\int_{I_{1,k}}g(x_{2}^{\prime })dx_{1}\Big)^{\frac{1}{n-1}}\Big(%
\int_{I_{1,k}}g(x_{1}^{\prime })dx_{2}\Big)^{\frac{1}{n-1}}\prod_{j=3}^{n-i}%
\Big(\int_{(I_{1,k})^{2}}g(x_{j}^{\prime })dx_{1}dx_{2}\Big)^{\frac{1}{n-1}}
\\
&\times \Big(\int_{(I_{1,k})^{2}}h(x_{n-i+1}^{\prime })dx_{1}dx_{2}\Big)^{%
\frac{1}{n-1}}\Big(\prod_{j=n-i+2}^{n}\int_{(I_{1,k})^{2}}w(x_{j}^{\prime
})dx_{1}dx_{2}\Big)^{\frac{1}{n-1}}.
\end{align*}%
Hence $\int_{(I_{1,k})^{n-i}}|f(x)|^{\frac{n}{n-1}}dx_{1}\cdot \cdot \cdot
dx_{n-i}$ is bounded by%
\begin{align*}
&\prod_{j=1}^{n-i}\Big(\int_{(I_{1,k})^{n-i-1}}g(x_{j}^{\prime })dx_{1}\cdot
\cdot \cdot dx_{j-1}dx_{j+1}\cdot \cdot \cdot dx_{n-i-1}\Big)^{\frac{1}{n-1}}
\\
&\times \Big(\int_{(I_{1,k})^{n-i}}h(x_{n-i+1}^{\prime })dx_{1}dx_{2}\cdot
\cdot \cdot dx_{n-i}\Big)^{\frac{1}{n-1}} \\
&\times \Big(\prod_{j=n-i+2}^{n}\int_{(I_{1,k})^{n-i}}w(x_{j}^{\prime
})dx_{1}dx_{2}\cdot \cdot \cdot dx_{n-i}\Big)^{\frac{1}{n-1}}.
\end{align*}%
In the same way $\int_{(I_{1,k})^{n-i}\times I_{2,k}}|f(x)|^{\frac{n}{n-1}%
}dx_{1}\cdot \cdot \cdot dx_{n-i}dx_{n-i+1}$ is bounded by%
\begin{align*}
&\prod_{j=1}^{n-i}\Big(\int_{(I_{1,k})^{n-i-1}\times I_{2,k}}g(x_{j}^{\prime
})dx_{1}\cdot \cdot \cdot dx_{j-1}dx_{j+1}\cdot \cdot \cdot dx_{n-i+1}\Big)^{%
\frac{1}{n-1}} \\
&\times \Big(\int_{(I_{1,k})^{n-i}}h(x_{n-i+1}^{\prime })dx_{1}dx_{2}\cdot
\cdot \cdot dx_{n-i}\Big)^{\frac{1}{n-1}} \\
&\times \prod_{j=n-i+2}^{n}\Big(\int_{(I_{1,k})^{n-i}\times
I_{2,k}}w(x_{j}^{\prime })dx_{1}dx_{2}\cdot \cdot \cdot dx_{n-i+1}\Big)^{%
\frac{1}{n-1}}.
\end{align*}%
Consequently $\int_{V_{i,k}^{1}}|f(x)|^{\frac{n}{n-1}}dx_{1}\cdot \cdot
\cdot dx_{n}$ is bounded by%
\begin{align*}
&\prod_{j=1}^{n-i}\Big(\int_{(I_{1,k})^{n-i-1}\times I_{2,k}\times
(I_{4,k})^{i-1}}g(x_{j}^{\prime })dx_{1}\cdot \cdot \cdot
dx_{j-1}dx_{j+1}\cdot \cdot \cdot dx_{n}\Big)^{\frac{1}{n-1}} \\
&\times \Big(\int_{(I_{1,k})^{n-i}\times (I_{4,k})^{i-1}}h(x_{n-i+1}^{\prime
})dx_{1}dx_{2}\cdot \cdot \cdot dx_{n-i}\Big)^{\frac{1}{n-1}} \\
&\times \prod_{j=n-i+2}^{n}\Big(\int_{(I_{1,k})^{n-i}\times I_{2,k}\times
(I_{4,k})^{i-2}}w(x_{j}^{\prime })dx_{1}dx_{2}\cdot \cdot \cdot \cdot
dx_{j-1}dx_{j+1}\cdot \cdot \cdot dx_{n}\Big)^{\frac{1}{n-1}},
\end{align*}%
which is bounded by%
\begin{equation*}
\prod_{j=1}^{n}\Big(\int_{\tilde{R}_{k}}\Big|\frac{\partial f_{k}}{\partial
x_{j}}(x)\Big|dx\Big)^{\frac{1}{n-1}}.
\end{equation*}%
Observe that%
\begin{equation*}
\Big|\frac{\partial f_{k}}{\partial x_{j}}\Big|\leq C2^{-(k+M)}\big|f\big|+%
\big|\nabla f\big|,\quad j\in \{1,2,...,n\},
\end{equation*}%
where the positive constant $C$ is independent of $k$. Consequently,%
\begin{equation*}
J_{i,k}^{1}\leq \Big(C2^{-(k+M)}\big\|f\chi _{\tilde{R}_{k}}\big\|_{L^{1}(%
\mathbb{R}^{n})}+\big\|(\nabla f)\chi _{\tilde{R}_{k}}\big\|_{L^{1}(\mathbb{R%
}^{n})}\Big)^{\frac{n}{n-1}}
\end{equation*}%
for any $k\in \mathbb{Z}$ and any $i\in \{1,2,...,n-1\}$. Using the fact
that $\alpha _{2}+n-1=\alpha _{1}+\frac{n}{q}$ we deduce the following
estimation%
\begin{align*}
&\Big(\sum\limits_{k=-\infty }^{\infty }2^{k(\alpha _{1}+\frac{n}{q}-n+1)r}%
\Big(\sum_{i=1}^{n-1}J_{i,k}^{1}\Big)^{\frac{(n-1)r}{n}}\Big)^{\frac{1}{r}}
\\
&\leq c_{1}\big\|f\big\|_{\dot{W}_{1,1}^{\alpha _{2},r}(\mathbb{R}%
^{n})}+c_{2}2^{-M}\big\|f\big\|_{\dot{K}_{1}^{\alpha _{2}-1,r}(\mathbb{R}%
^{n})},
\end{align*}%
where both $c_{1}$ and $c_{2}$ are independent of $M$. We estimate $%
V_{i,k}^{2},i\in \{1,2,...,n-1\}$. We have%
\begin{equation*}
J_{i,k}^{2}=\int_{(I_{1,k})^{n-i}\times I_{2,k}\times (I_{4,k})^{i-1}}\big|%
f_{k}(x_{1},...,x_{n-i},-x_{n-i+1},x_{n-i+2},...,x_{n})\big|^{\frac{n}{n-1}%
}dx_{1}dx_{2}\cdot \cdot \cdot dx_{n}
\end{equation*}%
for any $k\in \mathbb{Z}$ and any $i\in \{1,2,...,n-1\}$. The estimate of $%
\sum_{i=1}^{n-1}J_{i,k}^{2}$ can be done in the same way as in $V_{i,k}^{1}$%
. The estimate of $S_{k.}$ can be done in the same way as in $%
\sum_{i=1}^{n-1}J_{i,k}^{1}$ and $\sum_{i=1}^{n-1}J_{i,k}^{2}$. Collecting
these estimations in one formula we find that 
\begin{equation*}
\big\|f\big\|_{\dot{K}_{q}^{\alpha _{1},r}(\mathbb{R}^{n})}\leq c_{3}\big\|f%
\big\|_{\dot{W}_{1,1}^{\alpha _{2},r}(\mathbb{R}^{n})}+c_{4}2^{-M}\big\|f%
\big\|_{\dot{K}_{1}^{\alpha _{2}-1,r}(\mathbb{R}^{n})},
\end{equation*}%
where both $c_{3}$ and $c_{4}$ are independent of $M$. Using the fact that $%
\alpha _{2}+n-1=\alpha _{1}+\frac{n}{q}$\ and H\"{o}lder's inequality to
obtain%
\begin{equation*}
\big\|f\big\|_{\dot{K}_{1}^{\alpha _{2}-1,r}(\mathbb{R}^{n})}\leq B\big\|f%
\big\|_{\dot{K}_{q}^{\alpha _{1},r}(\mathbb{R}^{n})}.
\end{equation*}%
Choosing $M$ such that $c_{4}B2^{-M}\leq \frac{1}{2}$ we obtain the desired
inequality.
\end{proof}

\begin{remark}
We mention here that our embedding covers the Caffarelli--Kohn--Nirenberg
inequality because of \eqref{CKN-condition} yields that $1\leq q\leq \frac{n%
}{n-1}$.
\end{remark}

\begin{theorem}
\label{Embeddings2}Let $1\leq q\leq \frac{n}{\frac{n}{p}-1},0<r\leq \infty
,\alpha _{2}\geq \alpha _{1}$, and%
\begin{equation*}
\frac{n}{p}+\alpha _{2}-1=\frac{n}{q}+\alpha _{1}>0.
\end{equation*}%
Then%
\begin{equation}
\big\|f\big\|_{\dot{K}_{q}^{\alpha _{1},r}(\mathbb{R}^{n})}\lesssim \big\|f%
\big\|_{\dot{W}_{p,1}^{\alpha _{2},r}(\mathbb{R}^{n})},\quad f\in \mathcal{D}%
(\mathbb{R}^{n}),  \label{embeddings1}
\end{equation}%
holds, where%
\begin{equation*}
\big\|f\big\|_{\dot{W}_{p,1}^{\alpha _{2},r}(\mathbb{R}^{n})}=\Big(%
\sum\limits_{k=-\infty }^{\infty }2^{k\alpha _{2}r}\big\|(\nabla f)\chi _{k}%
\big\|_{p}^{r}\Big)^{\frac{1}{r}}.
\end{equation*}
\end{theorem}

\begin{proof}
Let $f\in \mathcal{D}(\mathbb{R}^{n})$ and\ $\frac{n}{\sigma }=\frac{n}{q}+n-%
\frac{n}{p}$. According to Theorem \ref{Embeddings1}, since $1\leq \sigma
\leq \frac{n}{n-1}$, one has%
\begin{equation}
\big\|g\big\|_{\dot{K}_{\sigma }^{\alpha _{1},\tau }(\mathbb{R}%
^{n})}\lesssim \sum_{j=1}^{n}\Big\|\frac{\partial g}{\partial x_{j}}\Big\|_{%
\dot{K}_{1}^{\alpha _{2},\tau }(\mathbb{R}^{n})},\quad 0<\tau \leq \infty .
\label{inequality1}
\end{equation}%
Let $g=|f|^{\frac{q}{\sigma }}$. It is easily seen that%
\begin{equation*}
\big\|f\big\|_{\dot{K}_{q}^{\alpha _{1},r}(\mathbb{R}^{n})}=\big\|g\big\|_{%
\dot{K}_{\sigma }^{\frac{q}{\sigma }\alpha _{1},r\frac{\sigma }{q}}(\mathbb{R%
}^{n})}^{\frac{\sigma }{q}}.
\end{equation*}%
Let 
\begin{equation*}
\widetilde{\alpha _{2}}=\alpha _{2}-\alpha _{1}+\frac{q}{\sigma }\alpha
_{1}\quad \text{and}\quad \widetilde{r}=r\frac{\sigma }{q}.
\end{equation*}%
From the inequality \eqref{inequality1}, we deduce 
\begin{align*}
\big\|g\big\|_{\dot{K}_{\sigma }^{\frac{q}{\sigma }\alpha _{1},\widetilde{r}%
}(\mathbb{R}^{n})} &\lesssim \sum_{j=1}^{n}\Big\|\frac{\partial g}{\partial
x_{j}}\Big\|_{\dot{K}_{1}^{\widetilde{\alpha _{2}},\widetilde{r}}(\mathbb{R}%
^{n})} \\
&\lesssim \sum_{j=1}^{n}\Big\|s|f|^{s-1}\frac{\partial f}{\partial x_{j}}%
\Big\|_{\dot{K}_{1}^{\widetilde{\alpha _{2}},\widetilde{r}}(\mathbb{R}^{n})},
\end{align*}%
with $s=\frac{q}{\sigma }$. By combining this estimate with%
\begin{equation*}
\widetilde{\alpha _{2}}=\alpha _{1}\frac{q}{p^{\prime }}+\alpha _{2}\quad 
\text{and}\quad \frac{1}{\widetilde{r}}=\frac{1}{r}\frac{q}{p^{\prime }}+%
\frac{1}{r}
\end{equation*}%
we see that%
\begin{align*}
\Big\|s|f|^{s-1}\frac{\partial f}{\partial x_{j}}\Big\|_{\dot{K}_{1}^{%
\widetilde{\alpha _{2}},\widetilde{r}}(\mathbb{R}^{n})} &\lesssim \big\|%
s|f|^{s-1}\big\|_{\dot{K}_{p^{\prime }}^{\alpha _{1}\frac{q}{p^{\prime }},%
\frac{rp^{\prime }}{q}}(\mathbb{R}^{n})}\Big\|\frac{\partial f}{\partial
x_{j}}\Big\|_{\dot{K}_{p}^{\alpha _{2},r}(\mathbb{R}^{n})} \\
&=c\big\|f\big\|_{\dot{K}_{q}^{\alpha _{1},r}(\mathbb{R}^{n})}^{\frac{q}{%
p^{\prime }}}\Big\|\frac{\partial f}{\partial x_{j}}\Big\|_{\dot{K}%
_{p}^{\alpha _{2},r}(\mathbb{R}^{n})},
\end{align*}%
where we have used the H\"{o}lder inequality. Therefore%
\begin{equation*}
\big\|f\big\|_{\dot{K}_{q}^{\alpha _{1},r}(\mathbb{R}^{n})}\lesssim \big\|f%
\big\|_{\dot{K}_{q}^{\alpha _{1},r}(\mathbb{R}^{n})}^{\frac{\sigma }{%
p^{\prime }}}\sum_{j=1}^{n}\Big\|\frac{\partial f}{\partial x_{j}}\Big\|_{%
\dot{K}_{p}^{\alpha _{2},r}(\mathbb{R}^{n})}^{\frac{\sigma }{q}}
\end{equation*}%
and get finally%
\begin{equation*}
\big\|f\big\|_{\dot{K}_{q}^{\alpha _{1},r}(\mathbb{R}^{n})}\lesssim
\sum_{j=1}^{n}\Big\|\frac{\partial f}{\partial x_{j}}\Big\|_{\dot{K}%
_{p}^{\alpha _{2},r}(\mathbb{R}^{n})}.
\end{equation*}%
Hence the proof is complete.
\end{proof}

\begin{remark}
Again our embedding covers the Caffarelli--Kohn--Nirenberg inequality
because of \eqref{CKN-condition} yields that $1\leq q\leq \frac{n}{\frac{n}{p%
}-1}$. Let $1\leq p\leq q<\infty $ and $\frac{n}{q}-\frac{n}{p}=\alpha
_{2}-1-\alpha _{1}$. By \eqref{embeddings1}\ we easily obtain that%
\begin{equation*}
\Big(\int_{\mathbb{R}^{n}}|x|^{\alpha _{1}q}|f(x)|^{q}dx\Big)^{\frac{1}{q}%
}\lesssim \Big(\sum\limits_{k=-\infty }^{\infty }2^{k\alpha _{2}q}\big\|%
(\nabla f)\chi _{k}\big\|_{p}^{q}\Big)^{\frac{1}{q}}\lesssim \Big(\int_{%
\mathbb{R}^{n}}|x|^{\alpha _{2}p}|\nabla f(x)|^{p}dx\Big)^{\frac{1}{p}},
\end{equation*}%
whenever the right-hand side is finite. In particular,%
\begin{equation*}
\big\|f\big\|_{q}\lesssim \Big(\sum\limits_{k=-\infty }^{\infty }\big\|%
(\nabla f)\chi _{k}\big\|_{p}^{q}\Big)^{\frac{1}{q}}\lesssim \big\|\nabla f%
\big\|_{p},
\end{equation*}%
where $1\leq p<q<\infty $ and $1-\frac{n}{p}=-\frac{n}{q}$, whenever the
right-hand side is finite, which is the Sobolev's inequality.
\end{remark}

In reality, the inequality of Caffarelli--Kohn--Nirenberg inequality says
that%
\begin{equation}
\big\||x|^{\alpha _{1}}f\big\|_{q}\leq c\big\||x|^{\alpha _{2}}f\big\|%
_{p}^{\theta }\big\||x|^{\alpha _{3}}\nabla f\big\|_{u}^{1-\theta },\quad
f\in \mathcal{D}(\mathbb{R}^{n}),  \label{CKN}
\end{equation}%
where $p,u\geq 1,q>0,0\leq \theta \leq 1,$%
\begin{equation*}
\frac{n}{p}+\alpha _{2}>0,\quad \frac{n}{u}+\alpha _{3}>0,\quad \frac{n}{q}%
+\alpha _{1}>0,
\end{equation*}%
\begin{equation*}
\frac{n}{q}+\alpha _{1}=\theta \big(\frac{n}{p}+\alpha _{2}-1\big)+\big(%
\frac{n}{u}+\alpha _{3}\big)(1-\theta ),\quad \alpha _{1}=\theta \sigma
+(1-\theta )\alpha _{3},
\end{equation*}%
\begin{equation*}
\sigma \leq \alpha _{2}\quad \text{if}\quad \theta >0
\end{equation*}%
and%
\begin{equation*}
\alpha _{2}\leq \sigma +1\quad \text{if}\quad \theta >0\text{\quad and\quad }%
\frac{n}{q}+\alpha _{1}=\frac{n}{p}+\alpha _{2}-1.
\end{equation*}%
Our aim is to extend this result to Herz spaces. We begin by the following
special case.

\begin{theorem}
\label{Embeddings3}Let $u\geq 1,q,v,r,s>0,0\leq \theta \leq 1,$%
\begin{equation*}
n+\alpha _{2}>0,\quad \frac{n}{u}+\alpha _{3}>0,\quad \frac{n}{q}+\alpha
_{1}>0,\quad \sigma \leq \alpha _{2}\leq \sigma +1,
\end{equation*}%
\begin{equation*}
\alpha _{1}=\theta \sigma +(1-\theta )\alpha _{3},\quad \frac{n}{q}+\alpha
_{1}=\theta \big(n+\alpha _{2}-1\big)+\big(\frac{n}{u}+\alpha _{3}\big)%
(1-\theta )
\end{equation*}%
and%
\begin{equation*}
\frac{1}{r}=\frac{\theta }{s}+\frac{1-\theta }{v}.
\end{equation*}%
Then%
\begin{equation*}
\big\|f\big\|_{\dot{K}_{q}^{\alpha _{1},r}(\mathbb{R}^{n})}\leq c\big\|%
\nabla f\big\|_{\dot{K}_{1}^{\alpha _{2},s}(\mathbb{R}^{n})}^{\theta }\big\|f%
\big\|_{\dot{K}_{u}^{\alpha _{3},v}(\mathbb{R}^{n})}^{1-\theta },\quad f\in 
\mathcal{D}(\mathbb{R}^{n}),
\end{equation*}
\end{theorem}

\begin{proof}
Obviously, we need only to study the case $0<\theta <1$. Let $h=\frac{n}{%
n-1+\alpha _{2}-\sigma }$. Therefore%
\begin{equation*}
\frac{1}{q}=\frac{\theta }{h}+\frac{1-\theta }{u}.
\end{equation*}%
Using H\"{o}lder's inequality we obtain%
\begin{equation*}
2^{k\alpha _{1}}\big\|f\chi _{k}\big\|_{q}\leq c\big(2^{k\sigma }\big\|f\chi
_{k}\big\|_{h}\big)^{\theta }\big(2^{k\alpha _{3}}\big\|f\chi _{k}\big\|_{u}%
\big)^{1-\theta },\quad k\in \mathbb{Z}\text{.}
\end{equation*}%
Therefore%
\begin{equation*}
\big\|f\big\|_{\dot{K}_{q}^{\alpha _{1},r}(\mathbb{R}^{n})}\leq c\big\|f%
\big\|_{\dot{K}_{h}^{\sigma ,s}(\mathbb{R}^{n})}^{\theta }\big\|f\big\|_{%
\dot{K}_{u}^{\alpha _{3},v}(\mathbb{R}^{n})}^{1-\theta }.
\end{equation*}%
Observe that 
\begin{equation*}
\frac{n}{h}+\sigma =n-1+\alpha _{2},\quad 1\leq h\leq \frac{n}{n-1}.
\end{equation*}%
Hence by Theorem \ref{Embeddings1},%
\begin{equation*}
\big\|f\big\|_{\dot{K}_{h}^{\sigma ,s}(\mathbb{R}^{n})}\leq c\big\|\nabla f%
\big\|_{\dot{K}_{1}^{\alpha _{2},s}(\mathbb{R}^{n})}.
\end{equation*}%
The proof is complete.
\end{proof}

Now we formulate our main theorem.

\begin{theorem}
\label{Embeddings4}Let $p,u\geq 1,q,r,v,s>0,0\leq \theta \leq 1,$%
\begin{equation*}
\frac{n}{p}+\alpha _{2}>0,\quad \frac{n}{u}+\alpha _{3}>0,\quad \frac{n}{q}%
+\alpha _{1}>0,\quad \sigma \leq \alpha _{2}\leq \sigma +1,
\end{equation*}%
\begin{equation*}
\alpha _{1}=\theta \sigma +(1-\theta )\alpha _{3},\quad \frac{n}{q}+\alpha
_{1}=\theta \big(\frac{n}{p}+\alpha _{2}-1\big)+\big(\frac{n}{u}+\alpha _{3}%
\big)(1-\theta )
\end{equation*}%
and%
\begin{equation*}
\frac{1}{r}=\frac{\theta }{s}+\frac{1-\theta }{v}.
\end{equation*}%
Then%
\begin{equation}
\big\|f\big\|_{\dot{K}_{q}^{\alpha _{1},r}(\mathbb{R}^{n})}\leq c\big\|%
\nabla f\big\|_{\dot{K}_{p}^{\alpha _{2},s}(\mathbb{R}^{n})}^{\theta }\big\|f%
\big\|_{\dot{K}_{u}^{\alpha _{3},v}(\mathbb{R}^{n})}^{1-\theta },\quad f\in 
\mathcal{D}(\mathbb{R}^{n}),  \label{CKN-Inequality}
\end{equation}
\end{theorem}

\begin{proof}
We have $\frac{1}{q}=\frac{\theta }{\tau }+\frac{1-\theta }{u}$, where $\tau
=\frac{n}{\frac{n}{p}-1+\alpha _{2}-\sigma }$. Using H\"{o}lder's inequality
we obtain%
\begin{equation*}
\big\|f\big\|_{\dot{K}_{q}^{\alpha _{1},r}(\mathbb{R}^{n})}\leq c\big\|f%
\big\|_{\dot{K}_{\tau }^{\sigma ,s}(\mathbb{R}^{n})}^{\theta }\big\|f\big\|_{%
\dot{K}_{u}^{\alpha _{3},v}(\mathbb{R}^{n})}^{1-\theta }.
\end{equation*}%
Observe that%
\begin{equation*}
\frac{n}{\tau }-\frac{n}{p}=\alpha _{2}-1-\sigma \leq 0.
\end{equation*}%
According to Theorem \ref{Embeddings2}, since $1\leq \tau \leq \frac{n}{%
\frac{n}{p}-1}$, one has%
\begin{equation*}
\big\|f\big\|_{\dot{K}_{\tau }^{\sigma ,s}(\mathbb{R}^{n})}\leq c\big\|%
\nabla f\big\|_{\dot{K}_{p}^{\alpha _{2},s}(\mathbb{R}^{n})},
\end{equation*}%
which completes the proof..
\end{proof}

\begin{remark}
More Caffarelli--Kohn--Nirenberg inequalities in function spaces are given
in \cite{Drihem20}. From \eqref{CKN-Inequality} we easily obtain%
\begin{equation*}
\big\|f\big\|_{\dot{K}_{q}^{\alpha _{1},q}(\mathbb{R}^{n})}\leq c\big\|%
\nabla f\big\|_{\dot{K}_{p}^{\alpha _{2},\tau }(\mathbb{R}^{n})}^{\theta }%
\big\|f\big\|_{\dot{K}_{u}^{\alpha _{3},u}(\mathbb{R}^{n})}^{1-\theta
},\quad f\in \mathcal{D}(\mathbb{R}^{n}),
\end{equation*}%
but $\tau \geq p$, then we obtain 
\begin{equation*}
\big\|f\big\|_{\dot{K}_{q}^{\alpha _{1},q}(\mathbb{R}^{n})}\leq c\big\|%
\nabla f\big\|_{\dot{K}_{p}^{\alpha _{2},p}(\mathbb{R}^{n})}^{\theta }\big\|f%
\big\|_{\dot{K}_{u}^{\alpha _{3},u}(\mathbb{R}^{n})}^{1-\theta },\quad f\in 
\mathcal{D}(\mathbb{R}^{n}),
\end{equation*}%
which is the classical Caffarelli--Kohn--Nirenberg inequality, see %
\eqref{CKN}.
\end{remark}

\section{Herz-type Sobolev spaces}

In this section we prove the basic properties of Herz-type Sobolev spaces in
analogy to the classical Sobolev spaces.

\begin{definition}
Let $\Omega \subset \mathbb{R}^{n}$ be open, $(\alpha ,p,q)\in V_{\alpha
,p,q}$ and $m\in \mathbb{N}_{0}$. We define the Herz-type Sobolev space $%
\dot{K}_{p,m}^{\alpha ,q}(\Omega )$ as the set of functions $f\in \dot{K}%
_{p}^{\alpha ,q}(\Omega )$ with weak derivatives $D^{\beta }f\in \dot{K}%
_{p}^{\alpha ,q}(\Omega )$ for $|\beta |\leq m$. We define the norm of $\dot{%
K}_{p,m}^{\alpha ,q}(\Omega )$ by%
\begin{equation*}
\big\|f\big\|_{\dot{K}_{p,m}^{\alpha ,q}(\Omega )}=\Big(\sum\limits_{k=-%
\infty }^{\infty }2^{k\alpha q}\Big(\sum_{|\beta |\leq m}\big\|(D^{\beta
}f)\chi _{R_{k}\cap \Omega }\big\|_{p}^{p}\Big)^{\frac{q}{p}}\Big)^{1/q}
\end{equation*}%
if $1\leq p,q<\infty $ and%
\begin{equation*}
\big\|f\big\|_{\dot{K}_{p,m}^{\alpha ,\infty }(\Omega )}=\sup_{k\in \mathbb{Z%
}}2^{k\alpha }\Big(\sum_{|\beta |\leq m}\big\|(D^{\beta }f)\chi _{R_{k}\cap
\Omega }\big\|_{p}^{p}\Big)^{\frac{1}{p}}.
\end{equation*}
\end{definition}

\begin{remark}
One recognizes immediately that if $p=q$ and $\alpha =0$, then $\dot{K}%
_{p,m}^{0,p}(\Omega )=W_{p}^{m}(\Omega )$.
\end{remark}

As in classical Sobolev spaces, see \cite[Theorem 3.3]{AdamsFournier03}, we
have the following statements:

\begin{theorem}
Let $\Omega \subset \mathbb{R}^{n}$\ be open and $(\alpha ,p,q)\in V_{\alpha
,p,q}$.\ For each $m\in \mathbb{N}_{0}$, the Herz-type Sobolev space $\dot{K}%
_{p,m}^{\alpha ,q}(\Omega )$ is a Banach space.
\end{theorem}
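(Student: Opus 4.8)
The plan is to run the classical completeness argument for Sobolev spaces, where the only genuinely new ingredient is the embedding into $L^1_{\mathrm{loc}}$ furnished by the hypothesis $(\alpha ,p,q)\in V_{\alpha ,p,q}$. First I would verify that $\dot{K}_{p,m}^{\alpha ,q}(\Omega )$ is a normed vector space: the triangle inequality reduces, by applying Minkowski's inequality successively in the inner $\ell ^{p}$-sum over the multi-indices $|\beta |\leq m$, then in the $L^{p}$-norm, and finally in the outer $\ell ^{q}$-sum over the dyadic annuli, to the corresponding norm properties already known for $\dot{K}_{p}^{\alpha ,q}(\Omega )$; positive homogeneity and definiteness are immediate from the definition of the norm.

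The heart of the matter is completeness. Let $\{f_{j}\}_{j}$ be a Cauchy sequence in $\dot{K}_{p,m}^{\alpha ,q}(\Omega )$. For every multi-index $\beta $ with $|\beta |\leq m$ one has $\big\|D^{\beta }(f_{i}-f_{j})\big\|_{\dot{K}_{p}^{\alpha ,q}(\Omega )}\leq \big\|f_{i}-f_{j}\big\|_{\dot{K}_{p,m}^{\alpha ,q}(\Omega )}$, so each sequence $\{D^{\beta }f_{j}\}_{j}$ is Cauchy in $\dot{K}_{p}^{\alpha ,q}(\Omega )$. Since $\dot{K}_{p}^{\alpha ,q}(\Omega )$ is a Banach space, there exist limits $g_{\beta }\in \dot{K}_{p}^{\alpha ,q}(\Omega )$ with $D^{\beta }f_{j}\rightarrow g_{\beta }$ in $\dot{K}_{p}^{\alpha ,q}(\Omega )$; I then set $f:=g_{0}$, so that $f_{j}\rightarrow f$ in $\dot{K}_{p}^{\alpha ,q}(\Omega )$ as well.

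The decisive step is to identify each $g_{\beta }$ with the weak derivative $D^{\beta }f$. Here I would invoke Lemma \ref{L1loc}: because $(\alpha ,p,q)\in V_{\alpha ,p,q}$, the embedding $\dot{K}_{p}^{\alpha ,q}(\Omega )\hookrightarrow L_{\mathrm{loc}}^{1}(\Omega )$ holds, so convergence in $\dot{K}_{p}^{\alpha ,q}(\Omega )$ entails convergence in $\mathcal{D}^{\prime }(\Omega )$. Thus $f_{j}\rightarrow f$ and $D^{\beta }f_{j}\rightarrow g_{\beta }$ in $\mathcal{D}^{\prime }(\Omega )$. Since the distributional derivative $D^{\beta }\colon \mathcal{D}^{\prime }(\Omega )\rightarrow \mathcal{D}^{\prime }(\Omega )$ is continuous, also $D^{\beta }f_{j}\rightarrow D^{\beta }f$ in $\mathcal{D}^{\prime }(\Omega )$, and uniqueness of distributional limits forces $D^{\beta }f=g_{\beta }\in \dot{K}_{p}^{\alpha ,q}(\Omega )$ for all $|\beta |\leq m$. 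Consequently $f\in \dot{K}_{p,m}^{\alpha ,q}(\Omega )$, and summing the convergences $D^{\beta }f_{j}\rightarrow D^{\beta }f$ over the finitely many $\beta $ yields $f_{j}\rightarrow f$ in the Herz-Sobolev norm, completing the proof.

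I expect the identification of $g_{\beta }$ with $D^{\beta }f$ to be the only delicate point, and it is precisely the hypothesis $(\alpha ,p,q)\in V_{\alpha ,p,q}$, through Lemma \ref{L1loc}, that legitimizes the distributional reading: without the embedding into $L_{\mathrm{loc}}^{1}(\Omega )$ an element of $\dot{K}_{p}^{\alpha ,q}(\Omega )$ need not define a distribution at all, and the very notion of weak derivative in the definition of $\dot{K}_{p,m}^{\alpha ,q}(\Omega )$ would be vacuous.
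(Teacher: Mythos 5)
Your argument is correct and is essentially the same as the paper's, which gives no proof of its own but defers to the classical completeness argument of Adams--Fournier (Theorem 3.3) that you reproduce: reduce to completeness of $\dot{K}_{p}^{\alpha ,q}(\Omega )$ componentwise, then identify the limits of the derivative sequences with the weak derivatives of the limit via the embedding $\dot{K}_{p}^{\alpha ,q}(\Omega )\hookrightarrow L_{\mathrm{loc}}^{1}(\Omega )$ of Lemma \ref{L1loc} and the continuity of $D^{\beta }$ on $\mathcal{D}^{\prime }(\Omega )$. You have also correctly isolated the role of the hypothesis $(\alpha ,p,q)\in V_{\alpha ,p,q}$, which is exactly why the paper imposes it in the definition of $\dot{K}_{p,m}^{\alpha ,q}(\Omega )$.
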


Exactly in the same way as in the classical Sobolev spaces, see \cite%
{AdamsFournier03}, but we use Theorem \ref{approximation1} we immediately
arrive at the following result.

\begin{lemma}
Let $\Omega \subset \mathbb{R}^{n}$ be open, $m\in \mathbb{N}_{0}$ and $%
(\alpha ,p,q)\in V_{\alpha ,p,q}$ with $1<p<\infty ,1\leq q<\infty $ and $-%
\frac{n}{p}<\alpha <n-\frac{n}{p}$. Let $\Omega ^{\prime }$ be an open
subset of $\Omega $ such that $\overline{\Omega ^{\prime }}$ is a compact
subset of $\Omega $. Let $J_{\varepsilon }$ be as above and $f\in \dot{K}%
_{p,m}^{\alpha ,q}(\Omega )$. Then%
\begin{equation*}
\lim_{\varepsilon \rightarrow 0_{+}}\big\|J_{\varepsilon }\ast f-f\big\|_{%
\dot{K}_{p,m}^{\alpha ,q}(\Omega ^{\prime })}=0.
\end{equation*}
\end{lemma}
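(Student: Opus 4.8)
The plan is to reduce the claim to the convergence in the base Herz space established in Theorem~\ref{approximation1}, applied separately to each weak derivative $D^{\beta}f$ with $|\beta|\leq m$. The two ingredients are the commutation of mollification with weak differentiation on a compactly contained subdomain, and the elementary equivalence of the Herz--Sobolev norm on $\Omega'$ with the finite sum $\sum_{|\beta|\leq m}\big\|D^{\beta}g\big\|_{\dot{K}_{p}^{\alpha,q}(\Omega')}$.

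First I would fix an intermediate open set $\Omega''$ with $\overline{\Omega'}\subset\Omega''\subset\overline{\Omega''}\subset\Omega$ and $\overline{\Omega''}$ compact (such a set exists by thickening $\overline{\Omega'}$ slightly, since $\overline{\Omega'}$ is a compact subset of the open set $\Omega$), and set $d=\operatorname{dist}(\overline{\Omega'},\partial\Omega'')>0$. Because $f\in\dot{K}_{p,m}^{\alpha,q}(\Omega)$, the hypothesis $(\alpha,p,q)\in V_{\alpha,p,q}$ and Lemma~\ref{L1loc} guarantee $f,D^{\beta}f\in L^{1}_{\mathrm{loc}}(\Omega)$, so the distributional derivatives are genuine locally integrable functions. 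For $x\in\Omega'$ the function $y\mapsto J_{\varepsilon}(x-y)$ lies in $\mathcal{D}(\Omega'')$ as soon as $\varepsilon$ is small enough that $\operatorname{supp}J_{\varepsilon}\subset B(0,d)$ (this holds regardless of whether $\operatorname{supp}J$ sits in a ball or in an annulus, since $\operatorname{supp}J_{\varepsilon}$ shrinks to $\{0\}$). Differentiating under the integral sign and moving $D^{\beta}$ onto $f$ through the definition of the weak derivative then yields the standard commutation identity
\begin{equation*}
D^{\beta}(J_{\varepsilon}\ast f)(x)=(J_{\varepsilon}\ast D^{\beta}f)(x),\qquad x\in\Omega',\ |\beta|\leq m,
\end{equation*}
for all such $\varepsilon$.

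Next, for each $\beta$ with $|\beta|\leq m$ I would put $g_{\beta}=(D^{\beta}f)\chi_{\Omega''}$, which vanishes outside $\Omega''$ and, since $\Omega''\subset\Omega$, satisfies $\big\|g_{\beta}\big\|_{\dot{K}_{p}^{\alpha,q}(\mathbb{R}^{n})}\leq\big\|(D^{\beta}f)\chi_{\Omega}\big\|_{\dot{K}_{p}^{\alpha,q}(\mathbb{R}^{n})}<\infty$. Theorem~\ref{approximation1}, applied with $\Omega''$ in place of $\Omega$, gives $\big\|J_{\varepsilon}\ast g_{\beta}-g_{\beta}\big\|_{\dot{K}_{p}^{\alpha,q}(\Omega'')}\to0$. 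For $x\in\Omega'$ and $\varepsilon$ small, the mollifier only samples $g_{\beta}$ on $B(x,d)\subset\Omega''$, where $g_{\beta}=D^{\beta}f$; hence $J_{\varepsilon}\ast g_{\beta}=J_{\varepsilon}\ast D^{\beta}f=D^{\beta}(J_{\varepsilon}\ast f)$ on $\Omega'$ by the commutation identity, while $g_{\beta}=D^{\beta}f$ on $\Omega'$ as well. Consequently
\begin{equation*}
\big\|(D^{\beta}(J_{\varepsilon}\ast f)-D^{\beta}f)\chi_{\Omega'}\big\|_{\dot{K}_{p}^{\alpha,q}(\mathbb{R}^{n})}=\big\|(J_{\varepsilon}\ast g_{\beta}-g_{\beta})\chi_{\Omega'}\big\|_{\dot{K}_{p}^{\alpha,q}(\mathbb{R}^{n})}\leq\big\|J_{\varepsilon}\ast g_{\beta}-g_{\beta}\big\|_{\dot{K}_{p}^{\alpha,q}(\Omega'')}\longrightarrow0,
\end{equation*}
the middle inequality holding termwise over the annuli $R_{k}$ because $\Omega'\subset\Omega''$. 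Finally I would invoke the norm equivalence, valid because there are only finitely many multi-indices with $|\beta|\leq m$,
\begin{equation*}
\big\|J_{\varepsilon}\ast f-f\big\|_{\dot{K}_{p,m}^{\alpha,q}(\Omega')}\leq c\sum_{|\beta|\leq m}\big\|(D^{\beta}(J_{\varepsilon}\ast f)-D^{\beta}f)\chi_{\Omega'}\big\|_{\dot{K}_{p}^{\alpha,q}(\mathbb{R}^{n})},
\end{equation*}
and let $\varepsilon\to0_{+}$.

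The main obstacle is the rigorous justification of the commutation identity together with the correct choice of the buffer domain $\Omega''$: one must ensure that for every $x\in\Omega'$ the translate $J_{\varepsilon}(x-\cdot)$ is admissible as a test function supported inside $\Omega''$, so that differentiation under the integral and the transfer of $D^{\beta}$ onto $f$ are legitimate. Everything else---the reduction to the base space, the identification $g_{\beta}=D^{\beta}f$ on $\Omega''$, and the finite-sum norm equivalence---is then routine, since Theorem~\ref{approximation1} already absorbs the weight $|\cdot|^{\alpha}$ and the full summation over the $R_{k}$.
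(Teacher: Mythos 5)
Your proposal is correct and is essentially the proof the paper has in mind: the paper simply states that the result follows ``exactly in the same way as in the classical Sobolev spaces'' via Theorem~\ref{approximation1}, and your argument is precisely that classical scheme --- choose a buffer domain $\Omega''$, commute $D^{\beta}$ with the mollifier for $\varepsilon$ small, apply Theorem~\ref{approximation1} to each $(D^{\beta}f)\chi_{\Omega''}$, and sum over the finitely many multi-indices. Your explicit handling of the support of $J_{\varepsilon}$ and of the reduction to the base Herz space fills in exactly the details the paper leaves to the reader.
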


Similarly as in \cite[Theorems 3.6 and 3.17]{AdamsFournier03} with the help
of Theorem \ref{K-separable} we have the following statements:

\begin{theorem}
\label{density}Let $\Omega \subset \mathbb{R}^{n}$ be open, $m\in \mathbb{N}%
_{0}$ and $(\alpha ,p,q)\in V_{\alpha ,p,q}$ with $1<p<\infty ,1\leq
q<\infty $ and $-\frac{n}{p}<\alpha <n-\frac{n}{p}$. $\dot{K}_{p,m}^{\alpha
,q}(\Omega )$ is separable and $C^{\infty }(\Omega )\cap \dot{K}%
_{p,m}^{\alpha ,q}(\Omega )$ is dense in $\dot{K}_{p,m}^{\alpha ,q}(\Omega
). $
\end{theorem}

\subsection{Embeddings}

In this subsection we present some embeddings of the spaces introduced above.

\begin{definition}
Let $v\in \mathbb{R}^{n}\backslash \{0\}$ and for each $x\neq 0$ let $\angle
(x,v)$ be the angle between the position vector $x$ and $v$. Let $\kappa $
satisfying $0<\kappa <\pi $. The set 
\begin{equation*}
C=\{x\in \mathbb{R}^{n}:x=0\text{ or }0<|x|\leq \varrho ,\angle (x,v)\leq
\kappa /2\}
\end{equation*}%
is called a finite cone of height $\varrho $, axis direction $v$ and
aperture angle $\kappa $ with vertex at the origin.
\end{definition}

\begin{remark}
Let $C$ be a finite cone with vertex at the origin. Note that $%
x+C=\{x+y:y\in C\}$ is a finite cone with vertex at $x$ but the same
dimensions and axis direction as $C$ and is obtained by parallel translation
of $C$.
\end{remark}

We are now in a position to state the definition of domain satisfying the
cone condition

\begin{definition}
\label{conecondition}Let $\Omega \subset \mathbb{R}^{n}$ be open. $\Omega $
satisfies the cone condition if there exists a finite cone $C$ such that
each $x\in \Omega $ is the vertex of a finite cone $C_{x}$ contained in $%
\Omega $ and congruent to $C$.
\end{definition}

\begin{remark}
In Definition \ref{conecondition} the cone $C_{x}$ is not obtained from $C$
by parallel translation, but simply by rigid motion.
\end{remark}

The following statement can be found in \cite[Lemma 4.15]{AdamsFournier03},
that plays an essential for us.

\begin{lemma}
\label{Sobolev-rep1}Let $\Omega \subset \mathbb{R}^{n}$ be a domain
satisfying the cone condition. Then we can find a positive constant $K$
depending on $m,n$, and the dimensions $\varrho $ and $\kappa $ of the cone $%
C$ specified the cone condition for $\Omega $ such that for every $f\in
C^{\infty }(\Omega )$, every $x\in \Omega $, and every $r$ satisfying $%
0<r\leq \varrho $, we have%
\begin{equation*}
|f(x)|\leq K\Big(\sum_{|\beta |\leq m-1}r^{|\beta
|-n}\int_{C_{x,r}}|D^{\beta }f(y)|dy+\sum_{|\beta |=m-1}\int_{C_{x,r}}\frac{%
|D^{\beta }f(y)|}{|x-y|^{n-m}}dy\Big),
\end{equation*}%
where $C_{x,r}=\{y\in C_{x}:y\in B(x,r)\}.$
\end{lemma}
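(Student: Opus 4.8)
The plan is to prove the pointwise estimate by the classical Sobolev integral representation: I expand $f$ by Taylor's formula along the rays issuing from $x$ into the cone $C_x$, average over the admissible directions, and rewrite the resulting radial integral in Cartesian coordinates. First I fix $x\in\Omega$ together with a cone $C_x\subset\Omega$ congruent to $C$, and I denote by $\Sigma\subset S^{n-1}$ the set of unit vectors $\sigma$ whose angle with the axis of $C_x$ is at most $\kappa/2$; its surface measure $\omega$ depends only on $n$ and $\kappa$. For $\sigma\in\Sigma$ and $0<t\le\varrho$ the point $x+t\sigma$ lies in $C_x$, and for $0<t\le r$ it lies in $C_{x,r}$, so $f$ is smooth along each such ray. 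I also fix once and for all a profile $\psi\in C^\infty([0,\infty))$ with $\psi\equiv1$ near $0$ and $\operatorname{supp}\psi\subset[0,1]$, and set $\phi(t)=\psi(t/r)$, so that $\phi(0)=1$, $\operatorname{supp}\phi\subset[0,r]$ and $|\phi^{(i)}(t)|\le c_i\,r^{-i}$ with $c_i$ depending only on $\psi$ (hence, since only $i\le m$ occurs, only on $m$).

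Next I set $g_\sigma(t)=f(x+t\sigma)$ and $h_\sigma(t)=\phi(t)\,g_\sigma(t)$. Since $\phi(0)=1$ one has $h_\sigma(0)=f(x)$, while $h_\sigma$ and all of its derivatives vanish for $t\ge r$. Taylor's formula with integral remainder, whose boundary terms are killed by the compact support, then gives
\begin{equation*}
f(x)=h_\sigma(0)=\frac{(-1)^m}{(m-1)!}\int_0^\infty t^{m-1}\,h_\sigma^{(m)}(t)\,dt .
\end{equation*}
The Leibniz rule applied to $h_\sigma=\phi\,g_\sigma$, together with $g_\sigma^{(j)}(t)=\sum_{|\alpha|=j}\frac{j!}{\alpha!}\sigma^\alpha(D^\alpha f)(x+t\sigma)$, writes the right-hand side as a finite sum of radial integrals of $\phi^{(i)}(t)\,(D^\alpha f)(x+t\sigma)$ with $|\alpha|=m-i$ and $0\le i\le m$. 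As the left-hand side does not depend on $\sigma$, I integrate over $\Sigma$ and divide by $\omega$, replacing $f(x)$ by $\omega^{-1}$ times an integral over $\Sigma\times(0,\infty)$.

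The decisive step is the passage to Cartesian coordinates via $y=x+t\sigma$, $t=|y-x|$, for which $dt\,d\sigma=|y-x|^{1-n}\,dy$; combined with the factor $t^{m-1}=|y-x|^{m-1}$ this turns every term into an integral over $C_{x,r}$ against the kernel $|y-x|^{m-n}\phi^{(i)}(|y-x|)$, with $|\alpha|=m-i$. For $i=0$ both $\phi$ and $|\sigma^\alpha|\le1$ are bounded, which yields the contribution $\sum_{|\alpha|=m}\int_{C_{x,r}}|x-y|^{m-n}|D^\alpha f(y)|\,dy$, the Riesz-type (singular) term. For $1\le i\le m$ the derivative $\phi^{(i)}$ is supported in a transition annulus where $|y-x|\approx r$, so there $|y-x|^{m-n}\approx r^{m-n}$ and $|\phi^{(i)}|\le c_i r^{-i}$, whence the kernel is bounded by $c\,r^{(m-i)-n}=c\,r^{|\alpha|-n}$; summing over $i$ gives the weighted lower-order family $\sum_{|\alpha|\le m-1}r^{|\alpha|-n}\int_{C_{x,r}}|D^\alpha f(y)|\,dy$. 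Assembling the two families and tracking the constants yields the asserted bound.

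The Taylor identity and the change of variables are routine; the step that needs care is the final bookkeeping---localizing each $\phi^{(i)}$ term to the annulus $|y-x|\approx r$ in order to trade the factor $r^{-i}$ for the correct power $r^{|\alpha|-n}$, and verifying that the only $x$- and $f$-independent quantities entering the final constant are $\omega$, the $c_i$, and elementary combinatorial factors, so that $K$ depends only on $m,n,\varrho,\kappa$, as claimed.
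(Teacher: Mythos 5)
Your proof is correct and is essentially the standard argument for this representation formula; the paper itself gives no proof, quoting the result as Lemma 4.15 of Adams--Fournier, whose proof is exactly the cutoff-plus-Taylor-along-rays computation you carry out (Taylor with integral remainder for $\phi(t)f(x+t\sigma)$, averaging over the solid angle of the cone, polar-to-Cartesian change of variables, and absorbing each $\phi^{(i)}$ term, $i\ge 1$, on the annulus $|y-x|\approx r$ into the factor $r^{|\alpha|-n}$). One remark: the second sum in the statement as printed should run over $|\alpha|=m$ rather than $|\alpha|=m-1$; this is the version you derive and the one the paper actually uses in its subsequent applications of the lemma.
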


Let $0<\lambda <n$. The Riesz potential operator $\mathcal{I}_{\lambda }$ is
defined by 
\begin{equation*}
\mathcal{I}_{\lambda }f(x)=\int_{{\mathbb{R}^{n}}}\frac{f(y)}{%
|x-y|^{n-\lambda }}dy.
\end{equation*}%
Let $p^{\ast }$ be the Sobolev exponent defined by $\frac{1}{p^{\ast }}=%
\frac{1}{p}-\frac{\lambda }{n}.$ The following\ statement plays a crucial
role in our embeddings\ results, see \cite{LiYang96}.

\begin{theorem}
\label{result3}Let $0<\lambda <n$, $0<q_{0}\leq q_{1}\leq \infty $ and $%
1<p<p^{\ast }<\frac{n}{\lambda }$. If 
\begin{equation*}
\lambda -\frac{n}{p}<\alpha <n-\frac{n}{p},
\end{equation*}%
then $\mathcal{I}_{\lambda }$ is bounded from $\dot{K}_{{p}}^{{\alpha ,}%
q_{0}}({\mathbb{R}^{n}})$ into $\dot{K}_{p^{\ast }}^{{\alpha ,}q_{1}}({%
\mathbb{R}^{n}})$.
\end{theorem}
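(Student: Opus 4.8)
The plan is to reduce the problem to a discrete convolution estimate by exploiting the annular structure of the Herz norm. Writing $f_j=f\chi_j$ so that $f=\sum_{j\in\mathbb{Z}}f_j$, I would fix $k\in\mathbb{Z}$ and split the action of the potential on the $k$-th annulus as
\begin{equation*}
(\mathcal{I}_\lambda f)\chi_k=\sum_{j\leq k-2}(\mathcal{I}_\lambda f_j)\chi_k+\sum_{|j-k|\leq1}(\mathcal{I}_\lambda f_j)\chi_k+\sum_{j\geq k+2}(\mathcal{I}_\lambda f_j)\chi_k.
\end{equation*}
For the near-diagonal block $|j-k|\leq1$ the support of $f_j$ and the target annulus are comparable, so the classical Hardy--Littlewood--Sobolev inequality (valid precisely because $1<p<\frac{n}{\lambda}$ and $\frac{1}{p^{\ast}}=\frac1p-\frac{\lambda}{n}$) gives $\big\|(\mathcal{I}_\lambda f_j)\chi_k\big\|_{p^{\ast}}\lesssim\big\|f_j\big\|_p$, and this band contributes only boundedly many terms.

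For the two far blocks I would use the pointwise kernel bound $|x-y|\approx\max(|x|,|y|)$. If $x\in C_k$ and $y\in C_j$ with $j\leq k-2$, then $|x-y|\approx2^k$, so H\"older's inequality on $C_j$ yields
\begin{equation*}
|\mathcal{I}_\lambda f_j(x)|\lesssim2^{-k(n-\lambda)}\big\|f_j\big\|_1\lesssim2^{-k(n-\lambda)}2^{jn/p^{\prime}}\big\|f_j\big\|_p,
\end{equation*}
and multiplying by $\big\|\chi_k\big\|_{p^{\ast}}\approx2^{kn/p^{\ast}}$ together with $\frac{n}{p^{\ast}}=\frac{n}{p}-\lambda$ collapses the $k$-powers to give $\big\|(\mathcal{I}_\lambda f_j)\chi_k\big\|_{p^{\ast}}\lesssim2^{(j-k)n/p^{\prime}}\big\|f_j\big\|_p$. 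Symmetrically, for $j\geq k+2$ one has $|x-y|\approx2^j$ and obtains $\big\|(\mathcal{I}_\lambda f_j)\chi_k\big\|_{p^{\ast}}\lesssim2^{(k-j)(n/p-\lambda)}\big\|f_j\big\|_p$.

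Setting $a_j=2^{j\alpha}\big\|f_j\big\|_p$, so that $\big\|f\big\|_{\dot{K}_p^{\alpha,q_0}(\mathbb{R}^n)}=\big\|\{a_j\}\big\|_{\ell^{q_0}}$, I would multiply the three estimates by $2^{k\alpha}$. The interior block becomes $\sum_{j\leq k-2}2^{(k-j)(\alpha-n/p^{\prime})}a_j$ and the exterior block becomes $\sum_{j\geq k+2}2^{(k-j)(\alpha+n/p-\lambda)}a_j$; these are one-sided discrete convolutions with geometric kernels, whose summability is exactly the two hypotheses $\alpha<n-\frac{n}{p}$ (equivalently $\alpha-\frac{n}{p^{\prime}}<0$) and $\alpha>\lambda-\frac{n}{p}$. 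Applying Lemma \ref{lq-inequality} in each direction controls their $\ell^{q_0}$ norms by $\big\|\{a_j\}\big\|_{\ell^{q_0}}$, and the embedding $\ell^{q_0}\hookrightarrow\ell^{q_1}$ (for $q_0\leq q_1$) upgrades these to $\ell^{q_1}$; the near-diagonal block is handled directly by the same embedding. Summing the three pieces in $\ell^{q_1}$ then yields $\big\|\mathcal{I}_\lambda f\big\|_{\dot{K}_{p^{\ast}}^{\alpha,q_1}(\mathbb{R}^n)}\lesssim\big\|f\big\|_{\dot{K}_p^{\alpha,q_0}(\mathbb{R}^n)}$.

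The main obstacle will be the bookkeeping in the two far regimes: one must verify that the exponent arithmetic (using $\frac{n}{p^{\ast}}=\frac{n}{p}-\lambda$) makes the surviving powers of $2^k$ cancel against $2^{k\alpha}$ in exactly the way that turns the double sum into geometric convolutions, with the endpoints $\lambda-\frac{n}{p}$ and $n-\frac{n}{p}$ emerging as the sharp thresholds for convergence. One must also treat the quasi-norm case $0<q_0\leq1$, where Young's inequality is replaced by the subadditivity $\big\|\cdot\big\|_{\ell^{q_0}}^{q_0}\leq\sum|\cdot|^{q_0}$, when invoking Lemma \ref{lq-inequality}.
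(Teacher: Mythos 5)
Your argument is correct. Note, however, that the paper does not prove this statement at all: it is quoted as a known result with a pointer to Li and Yang \cite{LiYang96}, so there is no internal proof to compare against. What you have written is essentially the standard (Li--Yang) proof: decompose $f=\sum_j f\chi_j$, treat the near-diagonal band $|j-k|\le 1$ by Hardy--Littlewood--Sobolev, and in the two far regimes use $|x-y|\approx 2^{\max(j,k)}$ plus H\"older on the annulus to reduce matters to one-sided geometric convolutions in $\ell^{q_0}(\mathbb{Z})$; your exponent bookkeeping is right, and the thresholds $\alpha<n-\frac np$ and $\alpha>\lambda-\frac np$ do emerge exactly as the convergence conditions for the interior and exterior blocks respectively. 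Two small points to tidy up: Lemma \ref{lq-inequality} as stated in the paper covers only sums of the form $\sum_{j\ge k}a^{j-k}\varepsilon_j$ over $\mathbb{N}_0$, so you should either note the routine extension to $\mathbb{Z}$-indexed sequences and to the reversed direction $\sum_{j\le k}a^{k-j}\varepsilon_j$, or simply invoke Young's inequality on $\ell^{q_0}(\mathbb{Z})$ for $q_0\ge 1$ and the $q_0$-subadditivity trick for $q_0<1$ (which you already flag). Also, the case $q_1=\infty$ or $q_0=\infty$ requires the usual supremum modification, but nothing changes in substance. Your write-up therefore supplies a complete, self-contained proof of a result the paper only cites.
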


Now we state the first embeddings theorem.

\begin{theorem}
\label{embeddingsfirst}Let $\Omega \subset \mathbb{R}^{n}$ be a domain
satisfying the cone condition, $0\in \Omega $ and $m\in \mathbb{N}_{0}$. Let 
$1<p<\infty ,1\leq r<\infty ,\alpha _{2}\geq \alpha _{1},m-\frac{n}{p}%
<\alpha _{2}<n-\frac{n}{p},$%
\begin{equation}
m-\alpha _{2}+\alpha _{1}>0\quad \text{and}\quad \frac{n}{q}=\frac{n}{p}%
-m+\alpha _{2}-\alpha _{1}>0.  \label{sobolevassumption1}
\end{equation}%
Then%
\begin{equation*}
\dot{K}_{p,m}^{\alpha _{2},r}(\Omega )\hookrightarrow \dot{K}_{q}^{\alpha
_{1},r}(\Omega )
\end{equation*}%
holds.{}
\end{theorem}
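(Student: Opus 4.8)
The plan is to combine the pointwise Sobolev representation from Lemma~\ref{Sobolev-rep1} with the boundedness of the maximal operator (Lemma~\ref{Maximal-Inq}) and of the Riesz potential (Theorem~\ref{result3}) on Herz spaces. First I would fix $f\in \dot{K}_{p,m}^{\alpha_2,r}(\Omega)$ and, by the density and approximation results (Theorems~\ref{dense} and \ref{approximation1}), reduce to the case $f\in C^\infty(\Omega)$, so that the representation is applicable. Applying Lemma~\ref{Sobolev-rep1} with a fixed admissible radius $r=\varrho$ gives, for each $x\in\Omega$,
\begin{equation*}
|f(x)|\le K\Big(\sum_{|\beta|\le m-1}\int_{C_x}|D^\beta f(y)|\,dy+\sum_{|\beta|=m-1}\int_{C_x}\frac{|D^\beta f(y)|}{|x-y|^{n-m+1}}\,dy\Big).
\end{equation*}

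Next I would recognize the two kinds of terms on the right. The integrals $\int_{C_x}|D^\beta f(y)|\,dy$, after restricting to balls and averaging, are controlled pointwise by the Hardy--Littlewood maximal function $\mathcal{M}(D^\beta f)(x)$ (up to the fixed constant $\varrho^{n}$ coming from the cone volume), while the singular terms $\int_{C_x}|D^\beta f(y)||x-y|^{-(n-m+1)}\,dy$ are dominated by the Riesz potential $\mathcal{I}_{m-1}(|D^\beta f|)(x)$ since the cone $C_x$ sits inside a ball and the kernel matches with $\lambda=m-1$. Thus I would obtain the pointwise bound
\begin{equation*}
|f(x)|\lesssim \sum_{|\beta|\le m-1}\mathcal{M}(D^\beta f)(x)+\sum_{|\beta|=m-1}\mathcal{I}_{m-1}(|D^\beta f|)(x).
\end{equation*}

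I would then take the $\dot{K}_{q}^{\alpha_1,r}$-norm of both sides. For the maximal-function terms, the gain from $p$ to $q$ does not come for free, so I expect to first interpose a Sobolev-type step: the condition $\frac{n}{q}=\frac{n}{p}-m+\alpha_2-\alpha_1$ together with $m\ge 1$ should let me route these terms through the Riesz potential as well (writing the lower-order derivatives as potentials of higher-order ones), or to absorb them into the $|\beta|=m-1$ terms by the inclusion relations among the cone integrals. The decisive estimate is Theorem~\ref{result3} applied with $\lambda=m-1$ (more precisely, with the effective smoothness that turns $p$ into $q$ via $\frac1q=\frac1p-\frac{\lambda}{n}$): it yields $\big\|\mathcal{I}_{\lambda}(|D^\beta f|)\big\|_{\dot{K}_{q}^{\alpha_1,r}}\lesssim \big\|D^\beta f\big\|_{\dot{K}_{p}^{\alpha_1',r}}$, provided the shift in the Herz exponent $\alpha_1\to\alpha_1'$ is compatible with the hypotheses $\lambda-\frac{n}{p}<\alpha<n-\frac{n}{p}$. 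Checking that the arithmetic of $\alpha_1,\alpha_2,p,q,m$ in \eqref{sobolevassumption1} lands inside the admissible window for both Lemma~\ref{Maximal-Inq} and Theorem~\ref{result3} is where I expect the main technical obstacle to lie; the inequalities $m-\frac{n}{p}<\alpha_2<n-\frac{n}{p}$ and $\alpha_2\ge\alpha_1$ are precisely what one needs to verify this.

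Finally, summing over the finitely many multi-indices $\beta$ and using that $\|D^\beta f\|_{\dot{K}_p^{\alpha_2,r}}\le \|f\|_{\dot{K}_{p,m}^{\alpha_2,r}}$ by definition of the Herz-Sobolev norm, I would assemble the bound $\|f\|_{\dot{K}_q^{\alpha_1,r}(\Omega)}\lesssim \|f\|_{\dot{K}_{p,m}^{\alpha_2,r}(\Omega)}$. The passage from $\Omega$ back from $\mathbb{R}^n$ is handled by the convention $\|g\|_{\dot{K}(\Omega)}=\|g\chi_\Omega\|_{\dot{K}(\mathbb{R}^n)}$ and by extending the cone integrals, which stay inside $\Omega$ by the cone condition, so no boundary difficulties arise beyond controlling the fixed geometric constants $\varrho,\kappa$.
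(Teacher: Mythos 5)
Your toolkit is the right one, but the plan as written has a genuine gap at precisely the point you flag as "the main technical obstacle": Theorem~\ref{result3} is \emph{weight-preserving} --- it maps $\dot{K}_{p}^{\alpha,q_0}(\mathbb{R}^n)$ into $\dot{K}_{p^\ast}^{\alpha,q_1}(\mathbb{R}^n)$ with the \emph{same} Herz exponent $\alpha$ on both sides --- whereas the embedding to be proved must pass from the weight $\alpha_2$ to the (smaller) weight $\alpha_1$. There is no "shift $\alpha_1\to\alpha_1'$" version of that theorem to invoke, so once $\alpha_2>\alpha_1$ your scheme cannot close. Moreover, the order of the potential you propose ($\lambda=m-1$, or $\lambda=m$ coming directly from the kernel $|x-y|^{m-n}$ in Lemma~\ref{Sobolev-rep1}) is incompatible with the exponent relation \eqref{sobolevassumption1}: since $\frac{n}{q}=\frac{n}{p}-(m-\alpha_2+\alpha_1)$, the Riesz potential that produces the correct $q$ is $\mathcal{I}_{m-\alpha_2+\alpha_1}$, an order which is generally non-integer and which the pointwise representation formula does not hand you. (Your fallback of controlling the non-singular cone integrals by $\mathcal{M}(D^\beta f)$ also gives no gain in integrability, so those terms cannot land in $\dot K_q^{\alpha_1,r}$ with $q>p$ by Lemma~\ref{Maximal-Inq} alone; absorbing them into the singular terms, as you also suggest, is the viable option and is what the paper does.)

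The missing idea is an annular decomposition that simultaneously fixes both problems. The paper splits $\sum_k 2^{k\alpha_1 r}\|f\chi_{R_k\cap\Omega}\|_q^r$ into $k\ge 0$, where $2^{k\alpha_1}\le 2^{k\alpha_2}$ makes the weight change trivial and Theorem~\ref{result3} applies with $\lambda=m-\alpha_2+\alpha_1$ and weight $\alpha_2$ (admissible because $m-\frac{n}{p}<\alpha_2<n-\frac{n}{p}$ and $\alpha_1\le\alpha_2$), and $k\le -1$. In the latter range, when $\varrho\le 2^{k-2}$ the cone $C_{x,\varrho}$ is contained in the thickened annulus $\tilde R_k$, so $|x-y|\lesssim 2^k$ and one may trade kernel decay for powers of $2^k$: $|x-y|^{-(n-m)}\le c\,2^{k(\alpha_2-\alpha_1)}|x-y|^{-(n-m+\alpha_2-\alpha_1)}$. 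This converts the order-$m$ kernel into the order-$(m-\alpha_2+\alpha_1)$ kernel demanded by \eqref{sobolevassumption1} \emph{and} converts the outer weight $2^{k\alpha_1}$ into $2^{k\alpha_2}$, after which Theorem~\ref{result3} applies. When $\varrho>2^{k-2}$ the cone reaches across many annuli; the paper then splits the cone integral into a near part (handled as above plus the maximal function) and a far part $\sum_{i=k+2}^{j}2^{(m-\frac{n}{p}-\alpha_2)i}\,2^{i\alpha_2}\|(D^\alpha f)\chi_{\tilde R_i\cap\Omega}\|_p$, which is summed using the Hardy-type Lemma~\ref{lq-inequality} and the hypothesis $\alpha_2>m-\frac{n}{p}$. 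None of this machinery --- the case split on $k$, the split on $\varrho$ versus $2^{k-2}$, the kernel/weight trade, and the use of Lemma~\ref{lq-inequality} --- appears in your outline, and without it the argument does not go through except in the special case $\alpha_1=\alpha_2$ (where your plan, with $\lambda=m$, is essentially Step~1 of the paper's proof).
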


\begin{proof}
We use Theorem \ref{density} and we will do the proof in two steps. Let $%
f\in C^{\infty }(\Omega )\cap \dot{K}_{p,m}^{\alpha _{2},r}(\Omega )$.

\textbf{Step 1.}\textit{\ }$\alpha _{1}=\alpha _{2}$. From Lemma \ref%
{Sobolev-rep1},%
\begin{equation*}
|f(x)|\lesssim \sum_{|\beta |\leq m}\mathcal{I}_{m}((D^{\beta }f)\chi
_{\Omega })(x),\quad x\in \Omega .
\end{equation*}%
Using Theorem \ref{result3} we obtain%
\begin{equation*}
\big\|f\big\|_{\dot{K}_{q}^{\alpha _{1},r}(\Omega )}\lesssim \sum_{|\beta
|\leq m}\big\|(D^{\beta }f)\chi _{\Omega }\big\|_{\dot{K}_{p}^{\alpha
_{1},r}(\mathbb{R}^{n})}\lesssim \big\|f\big\|_{\dot{K}_{p,m}^{\alpha
_{1},r}(\Omega )}.
\end{equation*}

\textbf{Step 2.} $\alpha _{2}>\alpha _{1}$. We write%
\begin{align*}
\big\|f\big\|_{\dot{K}_{q}^{\alpha _{1},r}(\Omega )}^{r}
&=\sum\limits_{k=-\infty }^{\infty }2^{k\alpha _{1}r}\big\|f\chi _{R_{k}\cap
\Omega }\big\|_{q}^{r} \\
&=\sum\limits_{k=-\infty }^{-1}2^{k\alpha _{1}r}\big\|f\chi _{R_{k}\cap
\Omega }\big\|_{q}^{r}+\sum\limits_{k=0}^{\infty }2^{k\alpha _{1}r}\big\|%
f\chi _{R_{k}\cap \Omega }\big\|_{q}^{r} \\
&=I_{1}+I_{2}.
\end{align*}

\textbf{Estimate of }$I_{1}$\textit{.} Let $\varrho $ be as in Lemma \ref%
{Sobolev-rep1}. We decompose $I_{1}$\ as follows: $I_{1}=I_{3}+I_{4}$, where%
\begin{equation*}
I_{3}=\sum\limits_{k\leq -1,\varrho \leq 2^{k-2}}2^{k\alpha _{1}r}\big\|%
f\chi _{R_{k}\cap \Omega }\big\|_{q}^{r}\quad \text{and}\quad
I_{4}=\sum\limits_{k\leq -1,\varrho >2^{k-2}}2^{k\alpha _{1}r}\big\|f\chi
_{R_{k}\cap \Omega }\big\|_{q}^{r}.
\end{equation*}%
Let $x\in R_{k}\cap \Omega $, $k\in \mathbb{Z}$. We estimate $I_{3}$. Since $%
x\in R_{k}\cap \Omega $ and $\varrho \leq 2^{k-2}$, we get $C_{x,\varrho
}\subset \tilde{R}_{k}=\{z:2^{k-2}\leq |z|\leq 2^{k+1}\}$. From Lemma \ref%
{Sobolev-rep1}, we easily obtain 
\begin{equation*}
|f(x)|\lesssim \sum_{|\beta |\leq m-1}\varrho ^{|\beta
|-n}\int_{C_{x,\varrho }}|D^{\beta }f(y)|\chi _{\tilde{R}_{k}}(y)dy+\sum_{|%
\beta |=m}\int_{C_{x,\varrho }}\frac{|D^{\beta }f(y)|}{|x-y|^{n-m}}\chi _{%
\tilde{R}_{k}}(y)dy,
\end{equation*}%
which is bounded by, because of\ $\alpha _{2}>\alpha _{1}$ and $m<n,$ 
\begin{align*}
&c2^{k(\alpha _{2}-\alpha _{1})}\sum_{|\beta |\leq m-1}\varrho ^{|\beta
|-n}\int_{C_{x,\varrho }}\frac{|D^{\beta }f(y)|}{|x-y|^{n-m+\alpha
_{2}-\alpha _{1}}}\chi _{\tilde{R}_{k}}(y)dy \\
&+c2^{k(\alpha _{2}-\alpha _{1})}\sum_{|\beta |=m}\int_{C_{x,\varrho }}\frac{%
|D^{\beta }f(y)|}{|x-y|^{n-m+\alpha _{2}-\alpha _{1}}}\chi _{\tilde{R}%
_{k}}(y)dy \\
&\lesssim 2^{k(\alpha _{2}-\alpha _{1})}\sum_{|\beta |\leq m-1}\mathcal{I}%
_{m-\alpha _{2}+\alpha _{1}}((D^{\beta }f)\chi _{\Omega \cap \tilde{R}%
_{k}})(x),
\end{align*}%
where the positive constant $c$ is independent of $k$. Thanks to Theorem \ref%
{result3} there exists some constant $c$ such that 
\begin{equation*}
I_{3}\lesssim \sum_{|\beta |\leq m}\big\|\mathcal{I}_{m-\alpha _{2}+\alpha
_{1}}((D^{\beta }f)\chi _{\Omega \cap \tilde{R}_{k}})\big\|_{\dot{K}%
_{q}^{\alpha _{2},r}(\mathbb{R}^{n})}^{r}\leq c\big\|f\big\|_{\dot{K}%
_{p,m}^{\alpha _{2},r}(\Omega )}^{r}.
\end{equation*}%
Now we estimate $I_{4}$. Let 
\begin{equation*}
J_{1,k}(x)=\sum_{|\beta |\leq m-1}\varrho ^{|\beta |-n}\int_{C_{x,\varrho
}}|D^{\beta }f(y)|dy\quad \text{and}\quad J_{2,k}(x)=\sum_{|\beta
|=m}\int_{C_{x,\varrho }}\frac{|D^{\beta }f(y)|}{|x-y|^{n-m}}dy.
\end{equation*}%
To estimate the first term we use the fact that $m<n$ and $\varrho >2^{k-2}$
which leads to%
\begin{align*}
J_{1,k}(x) &\lesssim 2^{(m-n)k}\sum_{|\beta |\leq
m-1}\int_{C_{x,2^{k-2}}}|D^{\beta }f(y)|dy \\
&+\sum_{|\beta |\leq m-1}\varrho ^{|\beta |-n}\int_{2^{k-2}\leq |x-y|\leq
\varrho }|D^{\beta }f(y)|\chi _{\Omega }(y)dy \\
&=c(J_{1,k}^{1}(x)+J_{1,k}^{2}(x)).
\end{align*}%
Let us estimate each term separately. By assumption %
\eqref{sobolevassumption1}\ and H\"{o}lder's inequality it is easy to see
that%
\begin{align*}
J_{1,k}^{1}(x) &\lesssim \sum_{|\beta |\leq
m-1}2^{(m-n)k}\int_{C_{x,2^{k-2}}}|D^{\beta }f(y)|\chi _{\tilde{R}%
_{k}}(y)\chi _{\Omega }(y)dy \\
&\lesssim 2^{k(\alpha _{2}-\alpha _{1}-\frac{n}{q})}\sum_{|\beta |\leq m-1}%
\big\|(D^{\beta }f)\chi _{\tilde{R}_{k}\cap \Omega }\big\|_{p}.
\end{align*}%
Therefore%
\begin{equation*}
\big\|(J_{1,k}^{1})\chi _{R_{k}\cap \Omega }\big\|_{q}\lesssim 2^{k(\alpha
_{2}-\alpha _{1})}\sum_{|\beta |\leq m-1}\big\|(D^{\beta }f)\chi _{\tilde{R}%
_{k}\cap \Omega }\big\|_{p}
\end{equation*}%
for any $k\leq -1$ such that $\varrho >2^{k-2}$. Rewriting $J_{1,k}^{2}$ as
follows: $J_{1,k}^{2}=J_{1,k,1}^{2}+J_{1,k,2}^{2}$, where%
\begin{equation*}
J_{1,k,1}^{2}(x)=\sum_{|\beta |\leq m-1}\varrho ^{|\beta
|-n}\int_{2^{k-2}\leq |x-y|\leq 2^{k+2}}|D^{\beta }f(y)|\chi _{\Omega }(y)dy
\end{equation*}%
and%
\begin{equation*}
J_{1,k,2}^{2}(x)=\sum_{|\beta |\leq m-1}\varrho ^{|\beta
|-n}\int_{2^{k+2}\leq |x-y|\leq \varrho }|D^{\beta }f(y)|\chi _{\Omega
}(y)dy.
\end{equation*}%
$J_{1,k,1}^{2}(x)$ can be estimated from above by 
\begin{equation*}
c2^{(\alpha _{2}-\alpha _{1})k}\sum_{|\beta |\leq m-1}\mathcal{I}_{m-\alpha
_{2}+\alpha _{1}}((D^{\beta }f)\chi _{\Omega })(x)
\end{equation*}%
for any $k\leq -1$ such that $\varrho >2^{k-2}$. Now we consider the second
term. We\ have%
\begin{equation*}
J_{1,k,2}^{2}(x)\lesssim \sum_{|\beta |\leq m-1}\int_{2^{k+2}\leq |x-y|\leq
\varrho }\frac{|D^{\beta }f(y)|\chi _{\Omega }(y)}{|x-y|^{n-m}}dy,
\end{equation*}%
which can be estimated by%
\begin{align*}
&c\sum_{|\beta |\leq m-1}\sum_{i=k+2}^{j}\int_{2^{i}\leq |x-y|\leq 2^{i+1}}%
\frac{|D^{\beta }f(y)|\chi _{\Omega }(y)}{|x-y|^{n-m}}\chi _{\tilde{R}%
_{i}}(y)dy \\
&\lesssim \sum_{|\beta |\leq m-1}\sum_{i=k+2}^{j}2^{(m-\frac{n}{p}-\alpha
_{2})i}2^{i\alpha _{2}}\big\|(D^{\beta }f)\chi _{\tilde{R}_{i}\cap \Omega }%
\big\|_{p},
\end{align*}%
where $2^{j-1}<\varrho \leq 2^{j},j\in \mathbb{Z}$ and we used H\"{o}lder's
inequality. By assumption \eqref{sobolevassumption1} we obtain%
\begin{equation*}
2^{k\alpha _{1}}\big\|\big(J_{1,k,2}^{2}\big)\chi _{R_{k}\cap \Omega }\big\|%
_{q}\lesssim 2^{(\frac{n}{p}-m+\alpha _{2})k}\sum_{|\beta |\leq
m-1}\sum_{i=k+2}^{j}2^{(m-\frac{n}{p}-\alpha _{2})i}2^{i\alpha _{2}}\big\|%
(D^{\beta }f)\chi _{\tilde{R}_{i}\cap \Omega }\big\|_{p}
\end{equation*}%
for any $k\leq -1$ such that $\varrho >2^{k-2}$.

We estimate $J_{2,k}$. We write 
\begin{equation}
J_{2,k}(x)=\sum_{|\beta |=m}\int_{C_{x,2^{k-2}}}\frac{|D^{\beta }f(y)|}{%
|x-y|^{n-m}}dy+\sum_{|\alpha |=m}\int_{B_{x,\varrho ,k}}\frac{|D^{\beta
}f(y)|}{|x-y|^{n-m}}dy,  \label{second}
\end{equation}%
where $B_{x,\varrho ,k}=C_{x}\cap \{y:2^{k-2}\leq |x-y|<\varrho \}$. The
first term is bounded by%
\begin{equation*}
c2^{k(\alpha _{2}-\alpha _{1})}\sum_{|\beta |\leq m}\int_{C_{x,2^{k-2}}}%
\frac{|D^{\beta }f(y)|\chi _{\Omega }(y)}{|x-y|^{n-m-\alpha _{1}+\alpha _{2}}%
}dy\lesssim 2^{k(\alpha _{2}-\alpha _{1})}\sum_{|\beta |\leq m}\mathcal{I}%
_{m-\alpha _{2}+\alpha _{1}}((D^{\beta }f)\chi _{\Omega })(x).
\end{equation*}%
Rewriting the second term of \eqref{second} as follows:\ $%
J_{2,k,1}+J_{2,k,2} $; where%
\begin{equation*}
J_{2,k,1}(x)=\sum_{|\beta |=m}\int_{2^{k-2}\leq |x-y|\leq 2^{k+2}}\frac{%
|D^{\beta }f(y)|}{|x-y|^{n-m}}dy
\end{equation*}%
and%
\begin{equation*}
J_{2,k,2}(x)=\sum_{|\beta |=m}\int_{2^{k+2}\leq |x-y|\leq \varrho }\frac{%
|D^{\beta }f(y)|}{|x-y|^{n-m}}dy.
\end{equation*}%
Observe that%
\begin{equation*}
J_{2,k,1}(x)\lesssim 2^{k(\alpha _{2}-\alpha _{1})}\sum_{|\beta |=m}\mathcal{%
I}_{m-\alpha _{2}+\alpha _{1}}((D^{\beta }f)\chi _{\Omega })(x).
\end{equation*}%
As in the estimation of $J_{1,k,2}^{2}$, we obtain%
\begin{equation*}
2^{k\alpha _{1}}\big\|\big(J_{2,k,2}\big)\chi _{R_{k}\cap \Omega }\big\|%
_{q}\lesssim 2^{(\frac{n}{p}-m+\alpha _{2})k}\sum_{|\beta |\leq
m-1}\sum_{i=k+2}^{j}2^{(m-\frac{n}{p}-\alpha _{2})i}2^{i\alpha _{2}}\big\|%
(D^{\beta }f)\chi _{\tilde{R}_{i}\cap \Omega }\big\|_{p}.
\end{equation*}%
Using the fact that\ $\alpha _{2}>m-\frac{n}{p}$, we obtain by Lemma \ref%
{lq-inequality} that $I_{4}\leq c\big\|f\big\|_{\dot{K}_{p,m}^{\alpha
_{2},r}(\Omega )}^{r}$.

\textbf{Estimate of }$I_{2}$\textit{.} Since $\alpha _{2}>\alpha _{1}$, we
obtain that 
\begin{equation*}
I_{2}\leq \sup_{k\in \mathbb{N}_{0}}2^{kr\alpha _{2}}\big\|f\chi _{\Omega }%
\big\|_{q}^{r}\lesssim \big\|f\big\|_{\dot{K}_{q}^{\alpha _{2},\infty
}(\Omega )}^{r}.
\end{equation*}%
Again from Lemma \ref{Sobolev-rep1},%
\begin{equation*}
|f(x)|\lesssim \sum_{|\beta |\leq m}\mathcal{I}_{m-\alpha _{2}+\alpha
_{1}}((D^{\beta }f)\chi _{\Omega })(x),\quad x\in \Omega .
\end{equation*}%
Using again Theorem \ref{result3} it follows as above that%
\begin{align*}
I_{2} &\lesssim \sum_{|\beta |\leq m}\big\|\mathcal{I}_{m-\alpha _{2}+\alpha
_{1}}((D^{\beta }f)\chi _{\Omega })\big\|_{\dot{K}_{q}^{\alpha _{2},r}(%
\mathbb{R}^{n})}^{r} \\
&\lesssim \sum_{|\beta |\leq m}\big\|(D^{\beta }f)\chi _{\Omega }\big\|_{%
\dot{K}_{p}^{\alpha _{2},r}(\mathbb{R}^{n})}^{r} \\
&\lesssim \big\|f\big\|_{\dot{K}_{p,m}^{\alpha _{2},r}(\Omega )}^{r},
\end{align*}%
since $m-\frac{n}{p}<\alpha _{2}<n-\frac{n}{p}$. The proof is complete.
\end{proof}

\begin{remark}
We mention that Theorem \ref{embeddingsfirst} covers the Sobolev inequality.
In addition 
\begin{equation*}
W_{p}^{m}(\Omega ,|\cdot |^{\alpha _{2}p})\hookrightarrow \dot{K}%
_{q}^{\alpha _{1},p}(\Omega )\hookrightarrow L^{q}(\Omega ,|\cdot |^{\alpha
_{1}q}),
\end{equation*}%
under the same assumptions of Theorem \ref{embeddingsfirst} with $r=p$. In
particular%
\begin{equation*}
W_{p}^{m}(\Omega )\hookrightarrow \dot{K}_{q}^{0,p}(\Omega )\hookrightarrow
L^{q}(\Omega ),
\end{equation*}%
holds if $1<p<\infty ,0<m<\frac{n}{p}\ $and%
\begin{equation*}
\frac{n}{q}=\frac{n}{p}-m.
\end{equation*}
\end{remark}

\begin{theorem}
\label{embedingsq=p}Let domain $\Omega \subset \mathbb{R}^{n}$ satisfy the
cone condition , $0\in \Omega $ and $m\in \mathbb{N}$. Let $1<p<\infty
,1\leq r<\infty ,\alpha _{2}\geq \alpha _{1}$, $\alpha _{1}+\frac{n}{p}>0$
and 
\begin{equation*}
\max \Big(\frac{n}{p}+\alpha _{2},\frac{n}{p}+\alpha _{2}-\alpha _{1}\Big)%
<m<n.
\end{equation*}%
Then%
\begin{equation*}
\dot{K}_{p,m}^{\alpha _{2},r}(\Omega )\hookrightarrow \dot{K}_{p}^{\alpha
_{1},r}(\Omega )
\end{equation*}%
holds.
\end{theorem}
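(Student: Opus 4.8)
The plan is to reduce everything to a local estimate on each dyadic annulus $R_{k}$ and to treat the annuli near infinity and near the origin separately. Since Lemma \ref{Sobolev-rep1} is stated for $f\in C^{\infty }(\Omega )$, I would first prove the inequality for such $f$ (the right-hand side being finite, otherwise there is nothing to prove) and then pass to general $f\in \dot{K}_{p,m}^{\alpha _{2},r}(\Omega )$ by the mollification results established above. Writing
\[
\big\|f\big\|_{\dot{K}_{p}^{\alpha _{1},r}(\Omega )}^{r}=\sum_{k\in \mathbb{Z}}2^{k\alpha _{1}r}\big\|f\chi _{R_{k}\cap \Omega }\big\|_{p}^{r}=I_{1}+I_{2},\qquad I_{1}=\sum_{k\leq -1},\quad I_{2}=\sum_{k\geq 0},
\]
the first observation is that $I_{2}$ is essentially free. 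Because the target integrability here equals $p$, the quantity $\big\|f\chi _{R_{k}\cap \Omega }\big\|_{p}$ is exactly the $\beta =0$ contribution to $\big\|f\big\|_{\dot{K}_{p,m}^{\alpha _{2},r}(\Omega )}$, and for $k\geq 0$ the hypothesis $\alpha _{2}\geq \alpha _{1}$ gives $2^{k\alpha _{1}}\leq 2^{k\alpha _{2}}$. Hence $I_{2}\leq \big\|f\big\|_{\dot{K}_{p}^{\alpha _{2},r}(\Omega )}^{r}\leq \big\|f\big\|_{\dot{K}_{p,m}^{\alpha _{2},r}(\Omega )}^{r}$, with no recourse to the cone condition.

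The substance is therefore $I_{1}$. The key structural fact is that the hypotheses force $m>\frac{n}{p}$: indeed $\alpha _{2}\geq \alpha _{1}$ together with $m>\frac{n}{p}+\alpha _{2}-\alpha _{1}$ gives $m>\frac{n}{p}$. Consequently the kernel $|x-y|^{-(n-m)}$ in Lemma \ref{Sobolev-rep1} is locally $L^{p^{\prime }}$-integrable (recall $1<p<\infty $), so the representation will yield a genuine $L^{\infty }$ bound on each annulus rather than a gain of integrability; this is why Theorem \ref{result3} is not the appropriate tool in the present range. Following the splitting of Theorem \ref{embeddingsfirst}, I would write $I_{1}=I_{3}+I_{4}$ according to $\varrho \leq 2^{k-2}$ or $\varrho >2^{k-2}$. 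Since $\varrho $ is fixed, $I_{3}$ runs over the \emph{finite} range $2+\log _{2}\varrho \leq k\leq -1$; on each such annulus the cone $C_{x,\varrho }$ stays inside $\tilde{R}_{k}$, so Lemma \ref{Sobolev-rep1} and $\big\|f\chi _{R_{k}\cap \Omega }\big\|_{p}\lesssim 2^{kn/p}\big\|f\big\|_{L^{\infty }(R_{k}\cap \Omega )}$ bound each of the finitely many terms by $\big\|f\big\|_{\dot{K}_{p,m}^{\alpha _{2},r}(\Omega )}$.

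The heart of the matter is $I_{4}$, where $\varrho >2^{k-2}$ and the cone $C_{x,\varrho }$ issuing from a point $x\in R_{k}$ near the origin reaches out to the fixed scale $\varrho $, sampling $D^{\alpha }f$ across all annuli $R_{i}$ with $k\lesssim i\lesssim j$, where $2^{j}\approx \varrho $. The plan is to insert the pointwise bound of Lemma \ref{Sobolev-rep1} (with radius $\varrho $) and split the cone integral into dyadic shells $|x-y|\approx 2^{i}$, on which $y\in \tilde{R}_{i}$. Hölder's inequality over a shell (of volume $\approx 2^{in}$) gives, for the top-order terms,
\[
\int_{2^{i}\leq |x-y|<2^{i+1}}\frac{|D^{\alpha }f(y)|}{|x-y|^{n-m}}\,dy\lesssim 2^{i(m-\frac{n}{p})}\big\|(D^{\alpha }f)\chi _{\tilde{R}_{i}}\big\|_{p},
\]
while the lower-order terms produce series of the same type whose $\varrho $-dependent prefactors are fixed constants and are summed exactly as the top-order ones. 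Summing the shells (using $m>\frac{n}{p}$ to absorb those interior to $R_{k}$) yields the uniform bound $\big\|f\big\|_{L^{\infty }(R_{k}\cap \Omega )}\lesssim \sum_{|\alpha |\leq m}\sum_{i=k}^{j}2^{i(m-\frac{n}{p})}\big\|(D^{\alpha }f)\chi _{\tilde{R}_{i}}\big\|_{p}$, so that $\big\|f\chi _{R_{k}\cap \Omega }\big\|_{p}\lesssim 2^{kn/p}\big\|f\big\|_{L^{\infty }(R_{k}\cap \Omega )}$ gives, with $\epsilon _{i}=2^{i\alpha _{2}}\sum_{|\alpha |\leq m}\big\|(D^{\alpha }f)\chi _{\tilde{R}_{i}}\big\|_{p}$,
\[
2^{k\alpha _{1}}\big\|f\chi _{R_{k}\cap \Omega }\big\|_{p}\lesssim 2^{k\mu }\sum_{i=k}^{j}2^{i\gamma }\epsilon _{i}=:\delta _{k},\qquad \mu =\alpha _{1}+\tfrac{n}{p},\quad \gamma =m-\tfrac{n}{p}-\alpha _{2}.
\]

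It then remains to sum $\{\delta _{k}\}$ in $\ell ^{r}$ over $k\leq -1$. By the hypotheses $\alpha _{1}+\frac{n}{p}>0$ and $m>\frac{n}{p}+\alpha _{2}$ both exponents are strictly positive, $\mu >0$ and $\gamma >0$, and the operator $\{\epsilon _{i}\}\mapsto \{\delta _{k}\}$ has nonnegative kernel $K(k,i)=2^{k\mu +i\gamma }$ supported on $k\leq i\leq j$. Since $\mu >0$ and the upper index $j$ is fixed, both $\sup _{k}\sum_{i}K(k,i)$ and $\sup _{i}\sum_{k}K(k,i)$ are finite, so Schur's test (for $1\leq r\leq \infty $) — or the elementary inequality $\big(\sum a_{i}\big)^{r}\leq \sum a_{i}^{r}$ when $0<r<1$ — gives $\big\|\{\delta _{k}\}\big\|_{\ell ^{r}}\lesssim \big\|\{\epsilon _{i}\}\big\|_{\ell ^{r}}\lesssim \big\|f\big\|_{\dot{K}_{p,m}^{\alpha _{2},r}(\Omega )}$. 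I expect this last summation to be the main obstacle, and it is precisely where the present theorem departs from Theorem \ref{embeddingsfirst}: there the analogous series had ratio $2^{m-\frac{n}{p}-\alpha _{2}}<1$ and was controlled by the Hardy-type Lemma \ref{lq-inequality}, whereas here $m>\frac{n}{p}+\alpha _{2}$ makes that ratio exceed $1$, so convergence must instead be extracted from the positivity of $\mu =\alpha _{1}+\frac{n}{p}$ together with the boundedness of $\varrho $. Combining the estimates for $I_{2}$, $I_{3}$ and $I_{4}$ completes the proof.
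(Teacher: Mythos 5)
Your global architecture is sound and in places cleaner than the paper's: the observation that the range $k\geq 0$ (and, more generally, any range of $k$ bounded below) is controlled by the $\beta =0$ term of the $\dot{K}_{p,m}^{\alpha _{2},r}$-norm together with $\alpha _{2}\geq \alpha _{1}$ is correct and dispenses with any representation formula there; the deduction $m>\frac{n}{p}$, the finite-range treatment of $I_{3}$, and the Schur-test summation of the outer shells $i\geq k+2$ are all fine. (The paper instead splits at $2^{k+2}\gtrless \varrho $ and handles the outer range via the fractional maximal operator $\mathcal{M}_{t}$ and Lemma \ref{Maximal-Inq}, but your shortcut is legitimate.)

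The gap is in the claimed local estimate underlying $I_{4}$. For $x\in R_{k}\cap \Omega $ with $\varrho >2^{k-2}$, the cone $C_{x,\varrho }$ need not avoid the ball $B(0,2^{k-2})$: its axis direction is unconstrained and $0\in \Omega $, so it can point at the origin and sweep through every annulus $R_{l}$ with $l\ll k$. The dyadic shells $\{y:2^{i}\leq |x-y|<2^{i+1}\}$ with $i\in \{k-2,\dots ,k+1\}$ then contain points $y$ with $|y|$ arbitrarily small, so the assertion that on such a shell $y\in \tilde{R}_{i}$ fails; consequently the bound $\|f\|_{L^{\infty }(R_{k}\cap \Omega )}\lesssim \sum_{|\alpha |\leq m}\sum_{i=k}^{j}2^{i(m-\frac{n}{p})}\|(D^{\alpha }f)\chi _{\tilde{R}_{i}}\|_{p}$ omits a contribution of size $\approx 2^{k(m-n)}\|(D^{\alpha }f)\chi _{B(0,2^{k-2})\cap \Omega }\|_{1}$, which your Schur kernel $K(k,i)$ supported on $k\leq i\leq j$ cannot see. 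The missing piece is controllable, but only by a second, \emph{downward} Hardy-type summation: H\"{o}lder on each inner annulus gives $\|(D^{\alpha }f)\chi _{R_{l}}\|_{1}\lesssim 2^{l(n-\frac{n}{p}-\alpha _{2})}\cdot 2^{l\alpha _{2}}\|(D^{\alpha }f)\chi _{R_{l}}\|_{p}$, and one needs the strict inequality $\alpha _{2}<n-\frac{n}{p}$ (the other half of the hypothesis $(\alpha _{2},p,r)\in V_{\alpha _{2},p,r}$, which you never invoke) to sum over $l\leq k$ geometrically and then apply Young's inequality in $\ell ^{r}$. This is exactly what the paper's bound by $2^{mk}\mathcal{M}((D^{\alpha }f)\chi _{\Omega })(x)$ over balls of radius $\approx |x|$, combined with Lemma \ref{Maximal-Inq}, accomplishes --- the condition $\alpha <n-\frac{n}{p}$ in that lemma is not decorative --- so in your scheme the maximal function cannot simply be dropped without supplying this extra summation explicitly.
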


\begin{proof}
We use Theorem \ref{density}. Let $f\in C^{\infty }(\Omega )\cap \dot{K}%
_{p,m}^{\alpha _{2},r}(\Omega )$. We write%
\begin{align*}
\big\|f\big\|_{\dot{K}_{p}^{\alpha _{1},r}(\Omega )}^{r}&
=\sum\limits_{k=-\infty }^{\infty }2^{k\alpha _{1}r}\big\|f\chi _{R_{k}\cap
\Omega }\big\|_{p}^{r} \\
& =\sum\limits_{2^{k+2}>\varrho }2^{k\alpha _{1}r}\big\|f\chi _{R_{k}\cap
\Omega }\big\|_{p}^{r}+\sum\limits_{2^{k+2}\leq \varrho }2^{k\alpha _{1}r}%
\big\|f\chi _{R_{k}\cap \Omega }\big\|_{p}^{r} \\
& =I_{1}+I_{2}.
\end{align*}%
Let us estimate $I_{1}$. Let $t>0$ be such that $\frac{n}{m}<t<\min (p,\frac{%
n}{\max (0,\alpha _{2}+\frac{n}{p})})$. By H\"{o}lder's inequality, we
obtain 
\begin{align*}
|f(x)|& \lesssim \sum_{|\beta |\leq m-1}\varrho ^{|\beta
|-n}\int_{C_{x,\varrho }}|D^{\beta }f(y)|dy+\sum_{|\beta
|=m}\int_{C_{x,\varrho }}\frac{|D^{\beta }f(y)|}{|x-y|^{n-m}}dy \\
& \lesssim \sum_{|\beta |\leq m}\mathcal{M}_{t}((D^{\beta }f)\chi _{\Omega
})(x)
\end{align*}%
for any $x\in R_{k}\cap \Omega $. Therefore%
\begin{align*}
I_{1}& \lesssim \sum_{|\beta |\leq m}\sum\limits_{2^{k+2}>\varrho
}2^{k\alpha _{1}r}\big\|\mathcal{M}_{t}((D^{\beta }f)\chi _{\Omega })\chi
_{R_{k}}\big\|_{p}^{r} \\
& \lesssim \sum_{|\beta |\leq m}\big\|\mathcal{M}_{t}((D^{\beta }f)\chi
_{\Omega })\big\|_{\dot{K}_{p}^{\alpha _{2},r}(\mathbb{R}^{n})}^{r} \\
& \lesssim \big\|f\big\|_{\dot{K}_{p,m}^{\alpha _{2},r}(\Omega )}^{r},
\end{align*}%
by\ Lemma \ref{Maximal-Inq}.

Now we estimate $I_{2}$. We employ the same notation as in Theorem \ref%
{embeddingsfirst}. We have 
\begin{equation*}
J_{1,k}(x)\lesssim \sum_{|\beta |\leq m-1}\varrho ^{|\beta
|-n}\int_{C_{x,\varrho }}\frac{|D^{\beta }f(y)|}{|x-y|^{n-m}}dy,\quad x\in
R_{k}\cap \Omega .
\end{equation*}%
Therefore we need only to estimate $J_{2,k}$. We write 
\begin{equation}
J_{2,k}(x)=\sum_{|\beta |=m}\int_{C_{x,2^{k-2}}}\frac{|D^{\beta }f(y)|}{%
|x-y|^{n-m}}dy+\sum_{|\beta |=m}\int_{B_{x,\varrho ,k}}\frac{|D^{\beta }f(y)|%
}{|x-y|^{n-m}}dy,  \label{integral1}
\end{equation}%
where $B_{x,\varrho ,k}=C_{x}\cap \{y:2^{k-2}\leq |x-y|<\varrho \}$. Let $%
t>0 $ be such that $m-\frac{n}{t}+\alpha _{1}-\alpha _{2}>0$ and $t<p$. By H%
\"{o}lder's inequality the first integral of \eqref{integral1} is bounded by,%
\begin{align*}
&c2^{k(\alpha _{2}-\alpha _{1})}\int_{C_{x,2^{k-2}}}\frac{|D^{\beta
}f(y)|\chi _{\tilde{R}_{k}\cap \Omega }(y)}{|x-y|^{n-m-\alpha _{1}+\alpha
_{2}}}dy \\
&\lesssim 2^{k(\alpha _{2}-\alpha _{1})}2^{(m-\frac{n}{t}+\alpha _{1}-\alpha
_{2})k}\mathcal{M}_{t}((D^{\beta }f)\chi _{\tilde{R}_{k}\cap \Omega
})(x),\quad |\beta |=m.
\end{align*}%
The boundedness of the maximal function on $L^{\frac{p}{t}}(\mathbb{R}^{n})$
yield that%
\begin{equation*}
\big\|\mathcal{M}_{t}((D^{\beta }f)\chi _{\tilde{R}_{k}\cap \Omega })\big\|%
_{p}\lesssim \big\|(D^{\beta }f)\chi _{\tilde{R}_{k}\cap \Omega }\big\|%
_{p},\quad |\beta |=m.
\end{equation*}%
Now%
\begin{align*}
\int_{B_{x,2^{k+2},k}}\frac{|D^{\beta }f(y)|}{|x-y|^{n-m}}dy &\lesssim
2^{(m-n)k}\int_{B_{x,2^{k+2},k}}|D^{\beta }f(y)|\chi _{\Omega }(y)dy \\
&\lesssim 2^{mk}\mathcal{M}((D^{\beta }f)\chi _{\Omega })(x) \\
&\lesssim 2^{(m+\alpha _{1}-\alpha _{2})k}2^{(\alpha _{2}-\alpha _{1})k}%
\mathcal{M}((D^{\beta }f)\chi _{\Omega })(x),\quad |\beta |=m.
\end{align*}%
Let $j\in \mathbb{Z}$ be such that $2^{j-1}<\varrho \leq 2^{j}$. As in
Theorem \ref{embeddingsfirst} we obtain%
\begin{align*}
\int_{B_{x,\varrho ,k+4}}\frac{|D^{\beta }f(y)|}{|x-y|^{n-m}}dy &\lesssim
\sum_{i=k+2}^{j}2^{(m-\frac{n}{p}-\alpha _{2})i}2^{\alpha _{2}i}\big\|%
(D^{\beta }f)\chi _{R_{i}\cap \Omega }\big\|_{p} \\
&\lesssim \big\|f\big\|_{\dot{K}_{p;m}^{\alpha _{2},r}(\Omega )},\quad
|\beta |=m.
\end{align*}%
The desired estimate follows by Lemma \ref{Maximal-Inq} and the fact that $%
\alpha _{1}+\frac{n}{p}>0$. The proof is complete.
\end{proof}

\begin{theorem}
\label{embeddingsq=infinity}Let domain $\Omega \subset \mathbb{R}^{n}$
satisfy the cone condition, $0\in \Omega $ and $m\in \mathbb{N}_{0}$. Let $%
1<p<\infty ,1\leq r<\infty $ and $\frac{n}{p}+\alpha _{2}<m<n$. Assume that $%
\alpha _{2}\geq \alpha _{1}>0$. Then%
\begin{equation*}
\dot{K}_{p,m}^{\alpha _{2},r}(\Omega )\hookrightarrow \dot{K}_{\infty
}^{\alpha _{1},r}(\Omega )
\end{equation*}%
holds.
\end{theorem}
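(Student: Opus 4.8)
The plan is to estimate the norm directly by the defining series
$\|f\|_{\dot{K}_{\infty}^{\alpha_1,r}(\Omega)}^{r}=\sum_{k}2^{k\alpha_1 r}\|f\chi_{R_k\cap\Omega}\|_{\infty}^{r}$
and to split this sum according to whether the dyadic scale $2^{k}$ is large or small compared with the fixed cone height $\varrho$ furnished by the cone condition, exactly as in the proofs of Theorems \ref{embeddingsfirst} and \ref{embedingsq=p}. Thus I would write $\|f\|_{\dot{K}_{\infty}^{\alpha_1,r}(\Omega)}^{r}=I_{1}+I_{2}$ with $I_{1}=\sum_{2^{k+2}>\varrho}(\cdots)$ and $I_{2}=\sum_{2^{k+2}\le\varrho}(\cdots)$. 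The decisive structural fact is that the hypothesis $m>\frac{n}{p}+\alpha_2\ge\frac{n}{p}$ forces $m>\frac{n}{p}$, so that $n-m<\frac{n}{p'}$ and the Riesz kernel $|x-y|^{-(n-m)}$ is locally $p'$-integrable, with $\int_{B(x,\varrho)}|x-y|^{-(n-m)p'}\,dy\lesssim\varrho^{(m-n/p)p'}$; this is precisely what converts the pointwise representation of Lemma \ref{Sobolev-rep1} into an $L^{\infty}$ bound through H\"older's inequality in $L^{p}$--$L^{p'}$.

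For $I_{1}$ (the scales $2^{k}\gtrsim\varrho$) I would apply Lemma \ref{Sobolev-rep1} with radius $\varrho$. Since here $2^{k}$ dominates $\varrho$, the truncated cone $C_{x,\varrho}$ is contained in a fixed dilate $\tilde{R}_{k}$ of the annulus $R_{k}$. H\"older's inequality applied to each term of the representation, together with the displayed kernel estimate and the fixed weights $\varrho^{|\alpha|-n}$, yields $\|f\chi_{R_k\cap\Omega}\|_{\infty}\lesssim\sum_{|\alpha|\le m}\|(D^{\alpha}f)\chi_{\tilde{R}_{k}\cap\Omega}\|_{p}$. Multiplying by $2^{k\alpha_1}$, using $\alpha_1\le\alpha_2$ (so that $2^{k\alpha_1}\lesssim 2^{k\alpha_2}$ on the range $2^{k}\gtrsim\varrho$ bounded below) and summing over $k$ gives $I_{1}\lesssim\|f\|_{\dot{K}_{p,m}^{\alpha_2,r}(\Omega)}^{r}$. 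The finitely many intermediate scales $2^{k}\approx\varrho$, for which the fixed cone may still reach the origin, I would fold into $I_{2}$ by enlarging the splitting threshold.

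For $I_{2}$ (the scales near the origin, $2^{k}\lesssim\varrho$) the cone of height $\varrho$ now traverses every dyadic annulus from scale $2^{k}$ out to scale $\varrho\approx 2^{j}$, and this is where the real work lies. Starting again from Lemma \ref{Sobolev-rep1}, I would decompose the singular integral over the shells $2^{i}\le|x-y|<2^{i+1}$, $k\le i\le j$; on each shell the kernel is comparable to $2^{i(m-n)}$, the integration domain lies in $\tilde{R}_{i}$, and H\"older's inequality produces a shell contribution $\lesssim 2^{i(m-n/p)}\|(D^{\alpha}f)\chi_{\tilde{R}_{i}\cap\Omega}\|_{p}$, the lower-order terms being treated the same way. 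Writing $\varepsilon_{i}=\sum_{|\alpha|\le m}2^{i\alpha_2}\|(D^{\alpha}f)\chi_{\tilde{R}_{i}\cap\Omega}\|_{p}$ (so that $\|\{\varepsilon_{i}\}\|_{\ell^{r}}\lesssim\|f\|_{\dot{K}_{p,m}^{\alpha_2,r}(\Omega)}$), one is led to an estimate of the shape $2^{k\alpha_1}\|f\chi_{R_k\cap\Omega}\|_{\infty}\lesssim\sum_{i\ge k}2^{k\alpha_1+i(m-n/p-\alpha_2)}\varepsilon_{i}$. I would then reorganize this double sum into a discrete Hardy/Schur-type summation, using the upper truncation $i\le j$ coming from the finite cone height, the positivity $m-\frac{n}{p}-\alpha_2>0$ to control the shells, and the conditions $\alpha_1>-\frac{n}{p}$ and $\alpha_1\le\alpha_2$ to govern the summation in $k$, appealing where possible to the Hardy-type Lemma \ref{lq-inequality}.

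The main obstacle is precisely this last reorganization in $I_{2}$. Because the cone height is fixed while the annuli shrink toward the origin, the contribution of every outer annulus to $\|f\chi_{R_k\cap\Omega}\|_{\infty}$ is propagated inward and must be summed against the weight $2^{k\alpha_1 r}$ as $k\to-\infty$. The delicate point is the treatment of the lower-order, and especially the zeroth-order, terms of the representation near the origin: these relate $\|f\chi_{R_k\cap\Omega}\|_{\infty}$ at small scales to the $L^{p}$-behaviour of $f$ on the outer annuli rather than to its derivatives, and verifying that the interplay of the exponents $m-\frac{n}{p}-\alpha_2$, $\alpha_2-\alpha_1$ and $\alpha_1+\frac{n}{p}$ keeps the resulting series convergent is where the full strength of the hypotheses must be used. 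By contrast, the estimate $I_{1}$ and the individual shell estimates in $I_{2}$ are routine consequences of H\"older's inequality once $m>\frac{n}{p}$ has been invoked.
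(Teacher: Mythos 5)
Your overall strategy is the same as the paper's: split the sum $\sum_k 2^{k\alpha_1 r}\|f\chi_{R_k\cap\Omega}\|_\infty^r$ at the scale of the cone height $\varrho$, use Lemma \ref{Sobolev-rep1} plus H\"older's inequality (exploiting $m>\frac{n}{p}$) for the large scales, and decompose the Riesz-type integral dyadically for the small scales. Your $I_1$ is complete and matches the paper's $S_1$. The small-scale part, however, contains a genuine gap, and it is exactly the step you yourself flag as ``the main obstacle.'' First, your one concrete claim there is false for the shells that matter most: if $x\in R_k$ and $2^{i}\le|x-y|<2^{i+1}$ with $2^{i}\approx|x|\approx 2^{k}$, that shell contains points $y$ arbitrarily close to the origin, so the integration domain is \emph{not} contained in $\tilde R_i$ and the shell contribution cannot be written as $2^{i(m-n/p)}\|(D^\alpha f)\chi_{\tilde R_i\cap\Omega}\|_p$. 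The paper deals with this by splitting $P_{2,k}$ according to $|y|\le\frac{|x|}{2}$, $\frac{|x|}{2}\le|y|\le 2|x|$ and $|y|>2|x|$ (the terms $T_{1,k},T_{2,k},T_{3,k}$); on the inner region one replaces the kernel $|x-y|^{m-n}$ by $|y|^{m-n}$ (legitimate since $|x-y|\ge|y|$ there) and sums over the inner annuli, the geometric series converging precisely because $m-\frac{n}{p}-\alpha_2>0$.

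Second, you never carry out the summation in $k$. You arrive at $2^{k\alpha_1}\|f\chi_{R_k\cap\Omega}\|_\infty\lesssim\sum_{i\ge k}2^{k\alpha_1+i(m-n/p-\alpha_2)}\varepsilon_i$ and assert that a Hardy/Schur reorganization ``should'' close the argument, but verifying that the exponents make the resulting $\ell^r$ norm finite is the substance of the theorem and is missing. Note that the coefficient $2^{k\alpha_1+i(m-n/p-\alpha_2)}$ is not of the form $a^{i-k}$ with $0<a<1$, so Lemma \ref{lq-inequality} does not apply directly; the paper instead bounds each of $T_{1,k},T_{2,k},T_{3,k}$ by $2^{k(m-\frac{n}{p}-\alpha_2+\alpha_1-\alpha_1)}$ times a fixed multiple of $\|f\|_{\dot K^{\alpha_2,r}_{p,m}(\Omega)}$ (or of $2^{k\alpha_2}\|(D^\alpha f)\chi_{\tilde R_k\cap\Omega}\|_p$) and then sums over $2^{k-2}\le\varrho$ using the positivity of $m-\frac{n}{p}-\alpha_2+\alpha_1$ together with $\alpha_1+\frac{n}{p}>0$. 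Identifying that it is exactly the combination $m-\frac{n}{p}-\alpha_2+\alpha_1>0$ which makes the series in $k$ converge is the point your proposal leaves open. As written, the proposal is a correct plan whose decisive estimates are not carried out.
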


\begin{proof}
Let $f\in C^{\infty }(\Omega )\cap \dot{K}_{p,m}^{\alpha _{2},r}(\Omega )$.
We write%
\begin{align*}
\big\|f\big\|_{\dot{K}_{\infty }^{\alpha _{1},r}(\Omega )}^{r}&
=\sum\limits_{k=-\infty }^{\infty }2^{k\alpha _{1}r}\big\|f\chi _{R_{k}\cap
\Omega }\big\|_{\infty }^{r} \\
& =\sum\limits_{2^{k-2}>\varrho }2^{k\alpha _{1}r}\big\|f\chi _{R_{k}\cap
\Omega }\big\|_{\infty }^{r}+\sum\limits_{2^{k-2}\leq \varrho }2^{k\alpha
_{1}r}\big\|f\chi _{R_{k}\cap \Omega }\big\|_{\infty }^{r} \\
& =S_{1}+S_{2}.
\end{align*}

\textbf{Estimate of }$S_{1}$\textit{. }From Lemma \ref{Sobolev-rep1} and H%
\"{o}lder's inequality, because of $m>\frac{n}{p}$, we obtain%
\begin{equation*}
|f(x)|\lesssim \sum_{|\beta |\leq m}\big\|(D^{\beta }f)\chi _{\tilde{R}%
_{k}\cap \Omega }\big\|_{p}
\end{equation*}%
for any $x\in R_{k}\cap \Omega $, since $C_{x,\varrho }\subset \tilde{R}_{k}$%
. Hence%
\begin{align*}
S_{1} &\lesssim \sum_{|\beta |\leq m}\sum\limits_{2^{k-2}>\varrho
}2^{k(\alpha _{1}-\alpha _{2})r}2^{k\alpha _{2}r}\big\|(D^{\beta }f)\chi _{%
\tilde{R}_{k}\cap \Omega }\big\|_{p}^{r} \\
&\lesssim \big\|f\big\|_{\dot{K}_{p,m}^{\alpha _{2},r}(\Omega )}^{r},
\end{align*}%
because of $\alpha _{2}\geq \alpha _{1}$.

\textbf{Estimate of }$S_{2}$\textit{. }We have%
\begin{align*}
|f(x)| &\lesssim \sum_{|\beta |\leq m}\int_{C_{x,\varrho }}\frac{|D^{\beta
}f(y)|}{|x-y|^{n-m}}dy \\
&=\sum_{|\beta |\leq m}\int_{C_{x,2^{k-2}}}\frac{|D^{\beta }f(y)|}{%
|x-y|^{n-m}}dy+\sum_{|\beta |\leq m}\int_{B_{x,\varrho ,k}}\frac{|D^{\beta
}f(y)|}{|x-y|^{n-m}}dy \\
&=P_{1,k}(x)+P_{2,k}(x),
\end{align*}%
where $B_{x,\varrho ,k}=C_{x}\cap \{y:2^{k-2}\leq |x-y|<\varrho \}$. Using
again H\"{o}lder's inequality we obtain%
\begin{equation*}
P_{1,k}(x)\leq \sum_{|\beta |\leq m}\int_{C_{x,2^{k-2}}}\frac{|D^{\beta
}f(y)|}{|x-y|^{n-m}}\chi _{\tilde{R}_{k}\cap \Omega }(y)dy\lesssim 2^{k(m-%
\frac{n}{p})}\sum_{|\beta |\leq m}\big\|(D^{\beta }f)\chi _{\tilde{R}%
_{k}\cap \Omega }\big\|_{p},
\end{equation*}%
because of $m>\frac{n}{p}$. Therefore%
\begin{align*}
\sum\limits_{2^{k-2}\leq \varrho }2^{k\alpha _{1}r}\sup_{x\in R_{k}\cap
\Omega }(P_{1,k}(x))^{r} &\lesssim \sum_{|\beta |\leq
m}\sum\limits_{2^{k-2}\leq \varrho }2^{k(m-\frac{n}{p}+\alpha _{1}-\alpha
_{2})}2^{k\alpha _{2}r}\big\|(D^{\beta }f)\chi _{\tilde{R}_{k}\cap \Omega }%
\big\|_{p}^{r} \\
&\lesssim \big\|f\big\|_{\dot{K}_{p,m}^{\alpha _{2},r}(\Omega )}^{r},
\end{align*}%
since $m-\frac{n}{p}+\alpha _{1}-\alpha _{2}>0$. Now we estimate $P_{2,k}$.
We write $P_{2,k}=T_{1,k}+T_{2,k}+T_{3,k}$, where%
\begin{equation*}
T_{1,k}(x)=\sum_{|\beta |\leq m}\int_{B_{x,\varrho ,k}}\frac{|D^{\beta }f(y)|%
}{|x-y|^{n-m}}\chi _{|\cdot |\leq \frac{|x|}{2}}(y)dy,
\end{equation*}%
\begin{equation*}
T_{2,k}(x)=\sum_{|\beta |\leq m}\int_{B_{x,\varrho ,k}}\frac{|D^{\beta }f(y)|%
}{|x-y|^{n-m}}\chi _{\frac{|x|}{2}\leq |\cdot |\leq 2|x|}(y)dy
\end{equation*}%
and%
\begin{equation*}
T_{3,k}(x)=\sum_{|\beta |\leq m}\int_{B_{x,\varrho ,k}}\frac{|D^{\beta }f(y)|%
}{|x-y|^{n-m}}\chi _{|\cdot |>2|x|}(y)dy
\end{equation*}%
Let us consider the first term. Using the fact that $|x-y|\geq |y|$ if $%
|y|\leq \frac{|x|}{2}$ and H\"{o}lder's inequality to obtain%
\begin{align*}
T_{1,k}(x) &\lesssim \sum_{|\beta |\leq m}\int_{|y|\leq 2^{k}}\frac{%
|D^{\beta }f(y)|\chi _{\Omega }(y)}{|y|^{n-m}}dy \\
&=c\sum_{|\beta |\leq m}\sum_{i=-\infty }^{k}2^{i(m-\frac{n}{p.}-\alpha
_{2})}2^{i\alpha _{2}}\big\|(D^{\beta }f)\chi _{R_{i}\cap \Omega }\big\|_{p}
\\
&=c2^{k(m-\frac{n}{p.}-\alpha _{2})}\sum_{|\alpha |\leq m}\sum_{i=-\infty
}^{k}2^{(i-k)(m-\frac{n}{p.}-\alpha _{2})}2^{i\alpha _{2}}\big\|(D^{\beta
}f)\chi _{R_{i}\cap \Omega }\big\|_{p} \\
&\lesssim 2^{k(m-\frac{n}{p.}-\alpha _{2})}\big\|f\big\|_{\dot{K}%
_{p,m}^{\alpha _{2},r}(\Omega )},
\end{align*}%
since $m-\frac{n}{p.}-\alpha _{2}>0$. This leads to 
\begin{align*}
\sum\limits_{2^{k-2}\leq \varrho }2^{k\alpha _{1}r}\sup_{x\in R_{k}\cap
\Omega }(T_{1,k}(x))^{r} &\lesssim \big\|f\big\|_{\dot{K}_{p,m}^{\alpha
_{2},r}(\Omega )}^{r}\sum\limits_{2^{k-2}\leq \varrho }2^{k(m-\frac{n}{p.}%
-\alpha _{2}+\alpha _{1})r} \\
&\lesssim \big\|f\big\|_{\dot{K}_{p,m}^{\alpha _{2},r}(\Omega )}^{r}.
\end{align*}%
Now we easily obtain%
\begin{align*}
T_{2,k}(x) &\lesssim \sum_{|\alpha |\leq m}2^{k(m-n)}\int_{\Omega }|D^{\beta
}f(y)|\chi _{\tilde{R}_{k}}(y)dy \\
&\lesssim 2^{k(m-\frac{n}{p})}\sum_{|\alpha |\leq m}\big\|(D^{\beta }f)\chi
_{\tilde{R}_{k}\cap \Omega }\big\|_{p}
\end{align*}%
by H\"{o}lder's inequality. Therefore%
\begin{align*}
\sum\limits_{2^{k-2}\leq \varrho }2^{k\alpha _{1}r}\sup_{x\in R_{k}\cap
\Omega }(T_{2,k}(x))^{r} &\lesssim \sum_{|\beta |\leq
m}\sum\limits_{2^{k-2}\leq \varrho }2^{k(m-\frac{n}{p.}-\alpha _{2}+\alpha
_{1})}2^{k\alpha _{2}r}\big\|(D^{\beta }f)\chi _{\tilde{R}_{k}\cap \Omega }%
\big\|_{p}^{r} \\
&\lesssim \big\|f\big\|_{\dot{K}_{p,m}^{\alpha _{2},r}(\Omega )}^{r}.
\end{align*}%
Let us estimate $T_{3,k}$. We have $|x-y|\geq \frac{|y|}{2}$, if $|y|>2|x|$.
Then%
\begin{align*}
T_{3,k}(x) &\lesssim \sum_{|\beta |\leq m}\int_{2^{k}\leq |y|\leq 2\varrho }%
\frac{|D^{\beta }f(y)|}{|y|^{n-m}}\chi _{\Omega }(y)dy \\
&\lesssim \sum_{|\beta |\leq m}\sum_{i=k}^{j+1}2^{(m-\frac{n}{p}-\alpha
_{2})i}2^{i\alpha _{2}}\big\|(D^{\beta }f)\chi _{R_{i}\cap \Omega }\big\|_{p}
\\
&\lesssim \big\|f\big\|_{\dot{K}_{p,m}^{\alpha _{2},r}(\Omega )}^{r},
\end{align*}%
where $2^{j-1}<\varrho \leq 2^{j},j\in \mathbb{Z}$. Using the fact that $%
\alpha _{1}>0$ we obtain%
\begin{equation*}
\sum\limits_{2^{k-2}\leq \varrho }2^{k\alpha _{1}r}\sup_{x\in R_{k}\cap
\Omega }(T_{3,k}(x))^{r}\lesssim \big\|f\big\|_{\dot{K}_{p,m}^{\alpha
_{2},r}(\Omega )}^{r}.
\end{equation*}%
The proof is complete.
\end{proof}

Collecting the results obtained in Theorems \ref{embedingsq=p} and \ref%
{embeddingsq=infinity} we have the following statement.

\begin{theorem}
\label{embedingsp<q}Let domain $\Omega \subset \mathbb{R}^{n}$ satisfy the
cone condition, $0\in \Omega $ and $m\in \mathbb{N}_{0}$. Let $1<p<q<\infty
,1\leq r<\infty ,\alpha _{2}\geq \alpha _{1}>0$ and 
\begin{equation*}
\max \Big(\frac{n}{p}+\alpha _{2},\frac{n}{p}+\alpha _{2}-\alpha _{1}\Big)%
<m<n.
\end{equation*}%
Then%
\begin{equation*}
\dot{K}_{p,m}^{\alpha _{2},r}(\Omega )\hookrightarrow \dot{K}_{q}^{\alpha
_{1},r}(\Omega )
\end{equation*}%
holds.{}
\end{theorem}

\begin{proof}
Let $f\in \dot{K}_{p,m}^{\alpha _{2},r}(\Omega )$ and $\theta =\frac{p}{q}$.
We have%
\begin{equation*}
\big\|f\big\|_{\dot{K}_{q}^{\alpha _{1},r}(\Omega )}\leq \big\|f\big\|_{\dot{%
K}_{p}^{\alpha _{1},r}(\Omega )}^{\theta }\big\|f\big\|_{\dot{K}_{\infty
}^{\alpha _{1},r}(\Omega )}^{1-\theta }\lesssim \big\|f\big\|_{\dot{K}%
_{p,m}^{\alpha _{2},r}(\Omega )},
\end{equation*}%
by Theorems \ref{embedingsq=p} and \ref{embeddingsq=infinity}. The proof is
complete.
\end{proof}

In the previous results we have not treated the case $q<p$. The next theorem
gives a positive answer.

\begin{theorem}
\label{embedingsp<q copy(1)}Let domain $\Omega \subset \mathbb{R}^{n}$
satisfy the cone condition, $0\in \Omega $ and $m\in \mathbb{N}_{0}$. Let $%
1<q<p<\infty ,1\leq r<\infty ,\alpha _{2}+\frac{n}{p}\geq \alpha _{1}+\frac{n%
}{q}>0$ and 
\begin{equation*}
\max \Big(\frac{n}{p},\frac{n}{p}+\alpha _{2},\frac{n}{p}-\frac{n}{q}+\alpha
_{2}-\alpha _{1}\Big)<m<n.
\end{equation*}%
Then%
\begin{equation*}
\dot{K}_{p,m}^{\alpha _{2},r}(\Omega )\hookrightarrow \dot{K}_{q}^{\alpha
_{1},r}(\Omega )
\end{equation*}%
holds.{}
\end{theorem}

\begin{proof}
We use Theorem \ref{density}. Let $f\in C^{\infty }(\Omega )\cap \dot{K}%
_{p,m}^{\alpha _{2},r}(\Omega )$. We employ the same notation as in Theorem %
\ref{embedingsq=p}. Let us estimate $I_{1}$. Let $t>0$ be such that $1<\frac{%
n}{m}<t<\min (p,\frac{n}{\max (0,\alpha _{2}+\frac{n}{p})})$. We have 
\begin{align*}
|f(x)|& \lesssim \sum_{|\beta |\leq m-1}\varrho ^{|\beta
|-n}\int_{C_{x,\varrho }}|D^{\beta }f(y)|dy+\sum_{|\beta
|=m}\int_{C_{x,\varrho }}\frac{|D^{\beta }f(y)|}{|x-y|^{n-m}}dy \\
& \lesssim \sum_{|\beta |\leq m}\mathcal{M}_{t}((D^{\beta }f)\chi _{\tilde{R}%
_{k}\cap \Omega })(x)
\end{align*}%
for any $x\in R_{k}\cap \Omega $. H\"{o}lder's inequality together with the
boundedness of the maximal function on $L^{\frac{p}{t}}(\mathbb{R}^{n})$
leads to 
\begin{align*}
I_{1}& \lesssim \sum_{|\beta |\leq m}\sum\limits_{2^{k+2}>\varrho
}2^{(\alpha _{1}+\frac{n}{q}-\frac{n}{p}-\alpha _{2})kr}2^{k\alpha _{2}r}%
\big\|\mathcal{M}_{t}((D^{\beta }f)\chi _{\tilde{R}_{k}\cap \Omega })\big\|%
_{p}^{r} \\
& \lesssim \big\|f\big\|_{\dot{K}_{p,m}^{\alpha _{2},r}(\Omega )}^{r},
\end{align*}%
since $\alpha _{2}+\frac{n}{p}\geq \alpha _{1}+\frac{n}{q}.$

To estimate $I_{2}$ we need only to estimate $J_{2,k}$. Recall that 
\begin{equation*}
J_{2,k}(x)=\sum_{|\beta |=m}\int_{C_{x,2^{k-2}}}\frac{|D^{\beta }f(y)|}{%
|x-y|^{n-m}}dy+\sum_{|\beta |=m}\int_{B_{x,\varrho ,k}}\frac{|D^{\beta }f(y)|%
}{|x-y|^{n-m}}dy,
\end{equation*}%
where $B_{x,\varrho ,k}=C_{x}\cap \{y:2^{k-2}\leq |x-y|<\varrho \}$. By H%
\"{o}lder's inequality the first integral is bounded by%
\begin{equation*}
c\sum_{|\beta |=m}2^{k(m-\frac{n}{p})}\big\|(D^{\beta }f)\chi _{\tilde{R}%
_{k}\cap \Omega }\big\|_{p},
\end{equation*}%
where the positive constant $c$ is independent of $k$. Now%
\begin{align*}
\int_{B_{x,2^{k+2},k}}\frac{|D^{\beta }f(y)|}{|x-y|^{n-m}}dy &\lesssim
2^{(m-n)k}\int_{B_{x,2^{k+2},k}}|D^{\beta }f(y)|\chi _{\Omega }(y)dy \\
&\lesssim 2^{mk}\mathcal{M}((D^{\beta }f)\chi _{\Omega })(x),\quad |\beta
|=m.
\end{align*}%
Let $j\in \mathbb{Z}$ be such that $2^{j-1}<\varrho \leq 2^{j}$. As in
Theorem \ref{embeddingsfirst} we obtain%
\begin{align*}
\int_{B_{x,\varrho ,k+4}}\frac{|D^{\beta }f(y)|}{|x-y|^{n-m}}dy &\lesssim
\sum_{i=k+2}^{j}2^{(m-\frac{n}{p}-\alpha _{2})i}2^{\alpha _{2}i}\big\|%
(D^{\beta }f)\chi _{R_{i}\cap \Omega }\big\|_{p} \\
&\lesssim \big\|f\big\|_{\dot{K}_{p;m}^{\alpha _{2},r}(\Omega )},\quad
|\beta |=m.
\end{align*}%
Using H\"{o}lder's inequality and Lemma \ref{Maximal-Inq}, we get 
\begin{align*}
I_{2} &\lesssim \sum_{|\beta |\leq m}\sum\limits_{2^{k+2}\leq \varrho
}2^{k(m-\frac{n}{p}+\alpha _{1}-\alpha _{2}+\frac{n}{q})r}2^{k\alpha _{2}r}%
\Big(\big\|(D^{\beta }f)\chi _{\tilde{R}_{k}\cap \Omega }\big\|_{p}^{r}+%
\big\|\mathcal{M}((D^{\beta }f)\chi _{\Omega })\chi _{R_{k}}\big\|_{p}^{r}%
\Big) \\
&+\big\|f\big\|_{\dot{K}_{p;m}^{\alpha _{2},r}(\Omega
)}^{r}\sum\limits_{2^{k+2}\leq \varrho }2^{k(\alpha _{1}+\frac{n}{q})r} \\
&\lesssim \big\|f\big\|_{\dot{K}_{p;m}^{\alpha _{2},r}(\Omega )}^{r},
\end{align*}%
since $\alpha _{1}+\frac{n}{q}>0$ and $m-\frac{n}{p}+\alpha _{1}-\alpha _{2}+%
\frac{n}{q}>0$. The proof is complete.
\end{proof}

\textbf{Acknowledgements.} We would like to thank the referee for many
valuable comments and suggestions. This work is found by the General
Direction of Higher Education and Training under Grant No.
C00L03UN280120220004 and by The General Directorate of Scientific Research
and Technological Development, Algeria.


\bigskip

\bigskip

\bigskip

Douadi Drihem, M'sila University, Department of Mathematics,

Laboratory of Functional Analysis and Geometry of Spaces,

P.O. Box 166, M'sila 28000, Algeria,

e-mail: \texttt{\ douadidr@yahoo.fr, douadi.drihem@univ-msila.dz}

\end{document}